\documentclass[twoside,centertags]{amsart}

\usepackage[cmtip,color,all]{xy}
\usepackage{amsmath}
\usepackage{amsfonts}
\usepackage{amssymb}
\usepackage{amsthm}
\usepackage{graphicx}
\usepackage{mathrsfs}
\usepackage{hyperref}

\newtheorem{theorem}{Theorem}[section]
\newtheorem*{theoremA}{Theorem}

\newtheorem{lemma}[theorem]{Lemma}
\newtheorem{proposition}[theorem]{Proposition}
\theoremstyle{definition}
\newtheorem{definition}[theorem]{Definition}
\newtheorem{remark}[theorem]{Remark}
\newtheorem{addendum}[theorem]{Addendum}
\newtheorem{example}[theorem]{Example}


\newcommand{\C}{\mathscr{C}}
\newcommand{\cab}{\mathcal{C}}
\renewcommand{\a}{\mathcal{A}}
\newcommand{\A}{\mathscr{A}}
\newcommand{\E}{\mathscr{E}}

\newcommand{\B}{\mathscr{B}}
\newcommand{\D}{\mathscr{D}}
\newcommand{\s}{\mathrm{S}}
\newcommand{\sgl}{\mathrm{\widehat{S}}}

\newcommand{\Ch}{\mathrm{Ch^b}}
\newcommand{\iso}{\mathrm{iso}}
\newcommand{\colim}{\mathrm{colim}}
\newcommand{\hocolim}{\mathrm{hocolim}}



\begin{document}

\title[D\'{e}vissage]{D\'{e}vissage for Waldhausen $K$-theory} %

\author[G. Raptis]{George Raptis}

\begin{abstract}
A d\'evissage--type theorem in algebraic $K$-theory is a statement that identifies the $K$-theory of a Waldhausen category $\C$ in terms of the $K$-theories of a collection of Waldhausen subcategories of $\C$ when a d\'evissage condition about the existence of appropriate finite filtrations is satisfied.  We distinguish between d\'evissage theorems of \emph{single type} and of \emph{multiple type} depending on the number of Waldhausen subcategories and their properties. The main representative examples of such theorems are Quillen's original d\'evissage theorem for abelian categories (single type) and Waldhausen's theorem on spherical objects for more general Waldhausen categories (multiple type). In this paper, we study some general aspects of
d\'evissage--type theorems and prove a general d\'evissage theorem of single type and a general d\'evissage theorem of multiple type.    
\end{abstract}

\address{\newline
G. Raptis \newline
Fakult\"at f\"ur Mathematik, Universit\"at Regensburg,  93040 Regensburg, Germany}
\email{georgios.raptis@ur.de}

\maketitle 

\section{Introduction}

The d\'{e}vissage theorem of Quillen \cite{Qu} is a fundamental theorem in algebraic $K$-theory with many applications. The theorem states that given a full exact inclusion $\A \hookrightarrow \C$ of abelian 
categories, where $\A$ is closed under subobjects and quotients in $\C$ and the following \emph{d\'evissage condition} is satisfied: every object $C \in \C$ 
admits a finite filtration
$$0=C_0 \subseteq C_1 \subseteq \cdots \subseteq C_{n-1} \subseteq C_n =C$$
such that $C_i / C_{i-1} \in \A$ for $i \geq 1$, then the induced map $K(\A) \xrightarrow{\simeq} K(\C)$ is a homotopy equivalence. This theorem is an important source of 
$K$-equivalences which do not arise from an equivalence between the underlying homotopy theories -- in other words, it makes essential use of the ``group completion'' process that defines algebraic $K$-theory. 

\smallskip 

Waldhausen \cite{Wa} extended the definition of Quillen $K$-theory to categories with cofibrations and weak equivalences (\emph{Waldhausen categories}) and proved generalizations of Quillen's fundamental 
theorems in this general context, but the d\'{e}vissage theorem has been a notable exception. The problem of finding a suitable 
generalization of the d\'{e}vissage theorem to Waldhausen $K$-theory was stated explicitly by Thomason--Trobaugh \cite[1.11.1]{TT} (see also Waldhausen \cite[p. 188]{Wa2}). More precisely, the problem asks for a general result in Waldhausen $K$-theory that specializes to Quillen's d\'evissage theorem when applied to the categories of bounded chain complexes. 

On the other hand, Waldhausen's theorem on spherical objects \cite[Theorem 1.7.1]{Wa} may be considered as a d\'evissage--type theorem, even though 
it is not related to Quillen's d\'evissage theorem. The theorem applies to the case of a Waldhausen category $\C$ which satisfies certain homotopical assumptions and has an associated collection $\A = (\A_i)$ of Waldhausen subcategories of \emph{spherical objects in} $\C$ \emph{of dimension} $i$, i.e., objects in $\C$ which are concentrated in degree $i$ in a suitable sense. Under the assumption that the morphisms in $\C$ satisfy a multiple type version of the d\'evissage 
condition in Quillen's theorem, the theorem states that a canonical comparison map:
$$\underrightarrow{\hocolim}_{(\Sigma)} K(\A_i) \xrightarrow{\simeq} K(\C)$$
is a homotopy equivalence. This theorem was used by Waldhausen \cite{Wa} in order to obtain a description of the algebraic $K$-theory of a space in terms of matrices which is analogous to Quillen's algebraic $K$-theory of rings. Waldhausen's theorem can be considered as a d\'evissage--type theorem if the latter characterization is understood to refer to a statement 
that identifies the $K$-theory of a Waldhausen category $\C$ in terms of the $K$-theories of Waldhausen subcategories $\A = (\A_i),$ $\A_i \subset \C,$ under the assumption of a d\'evissage condition
about the existence of appropriate finite filtrations (or factorizations) of morphisms in $\C$. Besides the theorems of Quillen and Waldhausen, \emph{d\'evissage} in $K$-theory has 
been studied extensively in the literature, especially in recent years, and some further examples of d\'evissage--type theorems include: the  Gillet--Waldhausen theorem \cite{TT}, the d\'evissage theorem of Blumberg--Mandell \cite{BM}, Barwick's  `theorem of the heart' \cite{Bar} and its extension to non-connective $K$-theory based on vanishing results for negative $K$-theory \cite{AGH}.

\smallskip

Despite the common d\'evissage--type quality of the theorems of Quillen and Waldhausen, there are also important differences between these two theorems that are worth 
making explicit. The obvious one is that in Quillen's theorem there is a single subcategory for filtering the objects in $\C$, whereas in Waldhausen's theorem, we have for 
a similar purpose a collection of subcategories $\A = (\A_i)$ that are suitably related. Secondly, in the context of Quillen's theorem, the subcategory $\A \subset \C$ 
(or rather, $\Ch(\A) \subset \Ch(\C)$) is closed under (homotopy) pushouts, but this fails for the subcategories $\A_i \subset \C$ in Waldhausen's context. Thirdly,
in Quillen's theorem, the subcategory $\Ch(\A) \subset \Ch(\C)$ is not homotopically full in general, but this property will typically hold for the subcategories 
$\A_i \subset \C$ in Waldhausen's theorem.  For the purposes of our analysis of d\'evissage--type theorems in this paper, we find it helpful to distinguish 
between these two cases of d\'evissage--type theorems, which we will refer to as \emph{single type d\'evissage} and \emph{multiple type d\'evissage}, respectively. 
From this perspective, the d\'evissage--type theorem of Gillet--Waldhausen (see \cite{TT}), the d\'evissage theorem of Blumberg--Mandell \cite{BM} and 
Barwick's  theorem of the heart \cite{Bar} belong to the category of d\'evissage theorems of multiple type -- where, in addition, the maps $\Sigma \colon K(\A_i) \stackrel{\simeq}{\to} K(\A_{i+1})$ are actually homotopy equivalences. 

\medskip

The purpose of this paper is to study some general aspects of d\'evissage--type theorems and prove a d\'evissage theorem of single type (Theorem \ref{devissage-1}) and a d\'evissage theorem of multiple type (Theorem \ref{devissage-multiple-type}). The proofs of these theorems use a common 
general method, in the spirit of \cite{Wa}, whose application in each case is distinguished by arguments specific to each case. We will work with the 
following general version of the d\'evissage condition: given a Waldhausen category $\C$ 
and a collection of Waldhausen subcategories $\A = (\A_i)_{i \geq 1}$ (which may be constant), we say that $(\C, \A)$ satisfies the \emph{d\'{e}vissage condition} if for every morphism $f: X \rightarrow Y$ in $\C$, there is a weak equivalence 
$g: Y \stackrel{\sim}{\rightarrow} Y'$ such that the composition $gf: X \to Y'$ 
admits a factorization
  $$X = X_0 \rightarrowtail X_1 \rightarrowtail \cdots \rightarrowtail X_{m-1} \rightarrowtail X_m \stackrel{\sim}{\to} Y'$$
where $X_i / X_{i-1} \in \A_i$ for every $i \geq 1$.

The following is our main d\'evissage theorem of single type. 

\begin{theoremA}[Single type d\'evissage, see Theorem \ref{devissage-1}] Let $(\C, \A)$ be a good Waldhausen pair. Suppose that $(\C, \A)$ is admissible and satisfies the d\'{e}vissage condition. Then the inclusion $\A \hookrightarrow \C$ induces a homotopy equivalence $K(\A) \stackrel{\simeq}{\rightarrow} K(\C).$
\end{theoremA}

A good Waldhausen pair $(\C, \A)$ consists of a Waldhausen category $\C$ and a Waldhausen subcategory $\A \subset \C$ which satisfy certain standard homotopical properties -- we refer to Section \ref{recollect} and Definition \ref{good-Wald-pair-def}. The technical and somewhat mysterious \emph{admissibility assumption} is studied in Section \ref{admissible-Wald-pairs} where also some examples of admissible Waldhausen pairs are shown. The relation to Quillen's theorem is discussed in Subsection \ref{abelian-example}. Moreover, we explain how Waldhausen's fundamental Additivity Theorem can be recovered from the single type d\'evissage theorem in the case of a good Waldhausen category (see Example \ref{additivity-example}).

\smallskip 

In the multiple type case, our main d\'evissage theorem is as follows. 

\begin{theoremA}[Multiple type d\'evissage, see Theorem \ref{devissage-multiple-type}] 
Let $\C$ be a good Waldhausen category equipped with a cylinder functor which satisfies the cylinder axiom. Let $\A = (\A_i)_{i \geq 1}$ be a collection of Waldhausen subcategories of $\C$ such that for every $i \geq 1$:
\begin{itemize}
\item[(a)] $(\C, \A_i)$ is a replete Waldhausen pair (Definition \ref{replete-def}), 
\item[(b)] $\A_i$ is closed under extensions in $\C$,
\item[(c)] the suspension functor $\Sigma \colon \C \to \C$ sends $\A_i$ to $\A_{i+1}$.
\end{itemize}
Suppose that $(\C, \A)$ is admissible, has the cancellation property, and satisfies the d\'evissage condition. Then the canonical map 
$$\underrightarrow{\hocolim}_{(\Sigma)} K(\A_i) \stackrel{\simeq}{\longrightarrow} \underrightarrow{\hocolim}_{(\Sigma)} K(\C) \simeq K(\C)$$
is a homotopy equivalence. 
\end{theoremA}

The admissibility assumption and the cancellation property are explained in Subsections \ref{admissibility-2-subsec} and \ref{devissage-cond-subsec-2}, respectively. This theorem generalizes Waldhausen's theorem (see Example \ref{Waldhausen-theorem}) and the Gillet--Waldhausen theorem can also be deduced (see Example \ref{Gillet-Waldhausen}). In addition, the assumptions of the theorem are satisfied in the case of a weight structure for a stable homotopy theory. Thus, the theorem can also be applied to conclude a `theorem of the heart' for weight structures, as the one obtained in \cite{Fo} and \cite[8.1.3]{HS} (see Example \ref{theorem_of_the_heart}).

\medskip

\noindent \textbf{Outline of the proofs.} Let us next give an outline of the general method of proof and describe the meaning of some of the assumptions. Let $\C$ denote a Waldhausen category and let $\A = (\A_i)_{i \geq 1}$ be a collection of Waldhausen subcategories of $\C$. 

We will consider a Waldhausen subcategory $\s_n \C_{\A}$ of $\s_n \C$ which consists of those filtered objects $X_{\bullet, \bullet} \in \s_n \C$:
$$\ast \rightarrowtail X_{0,1} \rightarrowtail \cdots \rightarrowtail X_{0,n}$$
whose successive cofibers $X_{i-1, i}$ are in $\A_i$ for $i \geq 1$. Applying Waldhausen's Additivity Theorem \cite{Wa}, we may identify the $K$-theory of this Waldhausen category $\s_n \C_{\A}$ as follows (Proposition \ref{ho-eq-1}): 
\begin{equation} \label{additivity-part}
K(\s_n \C_{\A}) \simeq \prod_1^n K(\A_i).
\end{equation}
In addition, we will consider the Waldhausen category $\sgl_n \C_{\A}$ which has the same underlying category with cofibrations as $\s_n \C_{\A}$, but the weak equivalences are detected at the underlying total object $X_{0,n}$. We may also allow $n$ to be arbitrarily large and consider the respective Waldhausen categories $\s_{\infty} \C_{\A}$ and $\sgl_{\infty} \C_{\A}$. Note that the Waldhausen category $\sgl_{\infty} \C_{\A}$ models the homotopy theory of objects in $\C$ equipped with a bounded filtration by objects whose successive cofibers are in $\A$. 

Under some general homotopical assumptions on $(\C, \A)$, there is a homotopy fiber sequence:
\begin{equation} \label{fiber-seq-part}
K(\s^w_{\infty} \C_{\A}) \to K(\s_{\infty} \C_{\A}) \to K(\sgl_{\infty} \C_{\A})
\end{equation}
where $\s^w_{\infty} \C_{\A} \subset \s_{\infty} \C_{\A}$ denotes the full Waldhausen subcategory of those objects which are weakly trivial in $\sgl_{\infty} \C_{\A}$  (see Proposition \ref{fiber seq}).  The admissibility assumption essentially identifies the $K$-theory of $\s^w_{\infty} \C_{\A}$ 
(we refer to Section \ref{admissible-Wald-pairs} for the single type case, and to Subsection \ref{admissibility-2-subsec} for the multiple type case). 

There is a forgetful exact functor 
$ev_{\infty} \colon \sgl_{\infty} \C_{\A} \to \C$ that sends a filtered object to the underlying total object. Under the assumption of the d\'evissage condition for $(\C, \A)$, we prove that the exact functor $ev_{\infty}$ induces  a homotopy equivalence:
\begin{equation} \label{devissage-part}
K(\sgl_{\infty} \C_{\A}) \xrightarrow{\simeq} K(\C).
\end{equation}
(See Proposition \ref{devissage-cond-prop} for the single type case and Proposition \ref{devissage-cond-prop2} for the multiple type case.)  Combining the above, we deduce an identification of $K(\C)$ in terms of $K(\A_i)$, for $i \geq 1$, as in the theorems of Quillen and Waldhausen respectively (see Theorem \ref{devissage-1} for the single type case and Theorem \ref{devissage-multiple-type} for the multiple type case).

\medskip

\noindent \textbf{Organization of the paper.} In Section 2, we recall some background material about Waldhausen categories and review some of the fundamental theorems of Waldhausen $K$-theory. In Section 3, we introduce the Waldhausen categories $\s_n \C_{\A}$ and $\sgl_n \C_{\A}$ and prove the homotopy 
equivalence \eqref{additivity-part} and the homotopy fiber sequence \eqref{fiber-seq-part}. In Section 4, we discuss the admissibility assumption in the single type case and give 
some examples of classes of admissible Waldhausen pairs $(\C, \A)$. 

We prove the homotopy equivalence \eqref{devissage-part} in the single type case in Section 5. As a consequence, 
we then deduce the d\'evissage theorem of single type (Theorem \ref{devissage-1}). In addition, we explain how the d\'evissage condition follows from the existence of a \emph{d\'evissage functor} (Subsection \ref{devissage-functors}). We also discuss the relation of the single type d\'evissage theorem 
to Quillen's d\'evissage theorem (Subsection \ref{abelian-example}) and to the Additivity Theorem (Example \ref{additivity-example}). 

In Section 6, we first introduce an 
abstract version of the context of Waldhausen's theorem on spherical objects (Subsection \ref{admissibility-2-subsec}) and explain the assumptions in the multiple type d\'evissage 
theorem. Then we prove the homotopy equivalence \eqref{devissage-part} in the multiple type case (Subsection \ref{devissage-cond-subsec-2}). Finally, in Subsection \ref{main-thm-2}, 
we deduce the d\'evissage theorem of multiple type (Theorem \ref{devissage-multiple-type}) and discuss its relation to Waldhausen's theorem (Example \ref{Waldhausen-theorem}), the Gillet--Waldhausen theorem (Example \ref{Gillet-Waldhausen}) and the theorem of the heart for weight structures (Example \ref{theorem_of_the_heart}).  

\medskip

\noindent \textbf{Acknowledgements.} The author is grateful for the support and the hospitality of the Hausdorff Institute for Mathematics in Bonn during the 
program ``Topology'' -- where a first outline of this work was worked out  -- and for the support and the hospitality of the Isaac Newton Institute 
in Cambridge during the research programme ``Homotopy harnessing higher structures'' -- where a final version of this work was first completed. The author 
was also partially supported by \emph{SFB 1085 --- Higher Invariants} (University of Regensburg) funded by the DFG.

\section{Recollections} \label{recollect}

In this section we fix some terminology about Waldhausen categories and recall some of the fundamental theorems of Waldhausen $K$-theory that will be needed in the proofs of our main results. Throughout the paper, we will mostly work with Waldhausen categories which have the 2-out-of-3 property and admit factorizations; under homotopical assumptions of this type, there is no essential difference between this approach and the approach via pointed (small) $\infty$-categories with finite colimits (see \cite[7.2]{BGT}). 

\subsection{Waldhausen categories} A Waldhausen category is a small category $\C$ with cofibrations $\mathrm{co}\C$ and weak equivalences $\mathrm{w} \C$ in the sense of \cite{Wa}. We recall that $\C$ has a zero object $\ast$. The cofibrations form a subcategory $\mathrm{co}\C \subset 
\C$ which contains the isomorphisms and the morphisms from $\ast$. In addition, pushouts along a cofibration exist in $\C$, and $\mathrm{co}\C$ is 
closed under these pushouts. The weak equivalences form a subcategory $\mathrm{w} \C \subset \C$ which contains the isomorphisms and the gluing axiom holds \cite{Wa}. Examples include the exact categories (in the sense 
of Quillen \cite{Qu}) and appropriate full subcategories of cofibrant objects in pointed model categories. Cofibrations will be indicated  by $\rightarrowtail$ and weak equivalences by $\stackrel{\sim}{\rightarrow}$.

We will also need to consider Waldhausen categories which have the following additional properties. 

\begin{definition} \label{Wald-cat-def}
Let $\C$ be a Waldhausen category. 
\begin{itemize}
\item[(a)] We say that $\C$ has the \emph{2-out-of-3 property} if given composable morphisms $X \xrightarrow{f} Y \xrightarrow{g} Z$ in $\C$,
then all three morphisms $f$, $g$, and $gf$, are weak equivalences, whenever any two of them are. 
\item[(b)] We say that $\C$ \emph{admits factorizations} if every morphism in $\C$ can be written as the composition 
of a cofibration followed by a weak equivalence.
\item[(c)] $\C$ is called \emph{good} if it has the 2-out-of-3 property and admits factorizations.
\end{itemize}
\end{definition} 

\begin{remark}
The 2-out-of-3 property corresponds to the \emph{saturation axiom} in \cite{Wa}. A good Waldhausen category is called \emph{derivable} in \cite{Ci}.
\end{remark}

A functor between Waldhausen categories $F: \C \to \C'$ is called exact if it preserves all the relevant structure (zero object, cofibrations, weak equivalences, and pushouts along a cofibration).

\subsection{Waldhausen subcategories} Let $\C$ be a Waldhausen category. A subcategory $\A \subset \C$ is called a Waldhausen subcategory of $\C$ 
if $\A$ becomes a Waldhausen category when equipped with the following classes of cofibrations and weak equivalences:
\begin{itemize}
\item[(i)] a morphism in $\A$ is in $\mathrm{co}\A$ if it is in $\mathrm{co}\C$ and has a cofiber in $\A$,
\item[(ii)] a morphism in $\A$ is in $\mathrm{w} \A$ if it is in $\mathrm{w} \C$,  
\end{itemize}
and also the inclusion functor $\A \hookrightarrow \C$ is exact. In other words, the zero object $* \in \C$ is also a zero object in $\A$, $\mathrm{co} \A$ is a subcategory of $\A$, and the pushout in $\C$ of a diagram in $\A$ along a cofibration (in $\C$ with cofiber in $\A$) defines also a pushout in $\A$. Note that we do not assume that $\A \subset \C$ is full in general. A pair $(\C, \A)$ where $\C$ is a Waldhausen category and $\A \subset \C$ a Waldhausen subcategory will be called a \emph{Waldhausen pair}. 

Clearly, a Waldhausen subcategory $\A \subset \C$ has the 2-out-of-3 property if $\C$ does. On the other hand,  one needs additional assumptions on $(\C, \A)$ in general for the existence of factorizations in $\A$. 

\subsection{Waldhausen $K$-theory} Waldhausen's $\s_{\bullet}$-construction associates to each Waldhausen category $\C$ a simplicial object $\s_{\bullet} \C$ 
of Waldhausen categories. Following \cite{Wa}, let $\mathrm{Ar}[n]$ denote the category of morphisms and commutative squares in the poset $[n]$, $n \geq 0$. $\s_n \C$ is the full subcategory of
the category of functors
$$X: \mathrm{Ar}[n] \to \C$$
that is spanned by the functors $X$ such that:
\begin{itemize}
 \item[(i)] $X(i \to i)$ is the zero object $* \in \C$,
 \item[(ii)] for every $0 \leq i \leq j \leq k \leq l \leq n$, the square 
 \[
  \xymatrix{
  X(i \to k) \ar[r] \ar[d] & X(i  \to l) \ar[d] \\
  X(j \to k) \ar[r] & X(j \to l)
  }
 \]
is a pushout in $\C$ and the horizontal morphisms are cofibrations in $\C$.
\end{itemize}
We will abbreviate $X(i \to j)$ to $X_{i,j}$.

Such a functor $X \colon \mathrm{Ar}[n] \to \C$ is determined up to isomorphism by the filtered object 
$$\ast = X_{0,0} \rightarrowtail X_{0,1} \rightarrowtail X_{0,2} \rightarrowtail \cdots \rightarrowtail X_{0,n}$$
since any other value of $X$ is either the zero object or a cofiber of a morphism from this filtration. We will sometimes 
refer to such functors $(X_{\bullet, \bullet})$ as \textit{staircase} diagrams. An object in $\s_2 \C$ is given by a cofiber 
sequence $(X_{0,1} \rightarrowtail X_{0,2} \twoheadrightarrow X_{1,2})$. 

\medskip

The category $\s_n \C$ is naturally a Waldhausen 
category where the weak equivalences are defined pointwise (see \cite{Wa}). The simplicial operators are defined by 
precomposition of functors and induce exact functors of Waldhausen categories. Moreover, $\C \mapsto \s_{\bullet} \C$ is a 
functor from the category of Waldhausen categories and exact functors to simplicial objects in this category. We note 
that if $\C$ is good, then so is $\s_n \C$ for any $n \geq 0$, see \cite[A.9]{Sc}, \cite{Ci}.

\medskip

The restriction to the subcategory of weak equivalences $w \s_{\bullet} \C$ yields a simplicial object in the category of
 small categories. The Waldhausen $K$-theory of $\C$ is the loop space of the geometric realization of the bisimplicial set associated to this 
simplicial category: 
$$K(\C) : = \Omega|N_{\bullet} w \s_{\bullet} \C|.$$

\subsection{The Additivity Theorem} We recall the statement of Waldhausen's fundamental Additivity 
Theorem (see \cite[1.3--1.4]{Wa}). 

\begin{theorem}[Additivity Theorem] \label{addit-thm}
Let $\C$ be a Waldhausen category. Then the exact functor 
$$(d_2, d_0): \s_2 \C \to \C \times \C, \  \ (A \rightarrowtail C \twoheadrightarrow B) \mapsto (A, B),$$ 
induces a homotopy equivalence $K(\s_2 \C) \xrightarrow{\simeq} K(\C) \times K(\C)$. 
\end{theorem}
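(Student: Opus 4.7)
My plan is to exhibit an explicit section of $(d_2, d_0)$ that is a formal one-sided inverse on $K$-theory, and then to show that it is in fact a two-sided inverse via a simplicial homotopy argument. The section is the exact functor
$$\sigma \colon \C \times \C \longrightarrow \s_2\C, \qquad (A,B) \mapsto \bigl(A \rightarrowtail A \vee B \twoheadrightarrow B\bigr),$$
where $A \vee B$ denotes the pushout of $A \leftarrow \ast \to B$, which is always available in a Waldhausen category by the pushout axiom. One checks directly that $\sigma$ preserves cofibrations, weak equivalences and pushouts along cofibrations, and that $(d_2,d_0) \circ \sigma = \mathrm{id}_{\C \times \C}$. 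Consequently $K(\sigma)$ splits $K(d_2, d_0)$, and the theorem reduces to proving the homotopy $K(\sigma \circ (d_2, d_0)) \simeq K(\mathrm{id}_{\s_2\C})$.

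To compare these two endofunctors of $\s_2\C$, I would interpret them as the middle terms of two natural cofiber sequences of exact functors that share the same outer terms. Every $X = (A \rightarrowtail C \twoheadrightarrow B) \in \s_2\C$ fits canonically into a cofiber sequence in $\s_2\C$,
$$\bigl(A \rightarrowtail A \twoheadrightarrow \ast\bigr) \rightarrowtail \bigl(A \rightarrowtail C \twoheadrightarrow B\bigr) \twoheadrightarrow \bigl(\ast \rightarrowtail B \twoheadrightarrow B\bigr),$$
and an analogous sequence exists with middle term $\sigma(A,B)$ and the same outer terms. The theorem thereby becomes equivalent to the \emph{cofiber-sequence formula for exact functors}: for any natural cofiber sequence $F' \rightarrowtail F \twoheadrightarrow F''$ of exact functors into $\C$, the induced maps satisfy $F_\ast \simeq F'_\ast + F''_\ast$ on $K$-theory.

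To establish this formula I would follow Waldhausen's original strategy. A natural cofiber sequence $F' \rightarrowtail F \twoheadrightarrow F''$ packages into a single exact functor landing in $\s_2\C$, and one uses the iterated bisimplicial structure of $N_\bullet w \s_\bullet \s_\bullet \C$, together with the extra degeneracies supplied by $\sigma$, to build an explicit simplicial homotopy on $|N_\bullet w \s_\bullet (-)|$ between the maps representing $F_\ast$ and $F'_\ast + F''_\ast$. The main obstacle is the combinatorial organization of this homotopy without circularity: since the cofiber-sequence formula is the very content of additivity, the proof must exploit precisely those features of the group completion $\Omega|N_\bullet w \s_\bullet (-)|$ that convert the tautological splittings of the split sequences $(A \rightarrowtail A \vee B \twoheadrightarrow B)$ into genuine homotopies valid for arbitrary cofiber sequences. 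I expect this combinatorial step to be the real work of the proof.
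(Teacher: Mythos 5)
The paper offers no proof of this statement: it is Waldhausen's Additivity Theorem, recalled with a citation to \cite[1.3--1.4]{Wa}, so the only meaningful comparison is with Waldhausen's own argument. Measured against that, your proposal contains a genuine gap: everything you actually carry out is the content-free part, and the step you defer is the entire theorem. The section $\sigma(A,B) = (A \rightarrowtail A \vee B \twoheadrightarrow B)$ and the resulting splitting of $K(d_2,d_0)$ are fine, and the observation that the theorem is equivalent to the cofiber-sequence formula $F_* \simeq F'_* + F''_*$ for natural cofiber sequences of exact functors is also correct --- but this equivalence of formulations is exactly Waldhausen's Proposition 1.3.2, and it is purely formal. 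After these reductions you have traded the theorem for an equivalent statement and then described, rather than executed, a proof of it. Your own closing sentence ("I expect this combinatorial step to be the real work of the proof") is an accurate self-assessment: the proposal stops precisely where the proof begins.

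Beyond the omission, the mechanism you gesture at is not the one that works. There is no simplicial homotopy, built from the degeneracies supplied by $\sigma$, between the self-maps of $N_\bullet w \s_\bullet \s_2 \C$ induced by $\mathrm{id}$ and by $\sigma \circ (d_2,d_0)$; if such a homotopy existed at the level of the bisimplicial set, additivity would be a formality, which it is not. Waldhausen's actual proof does not compare these two endofunctors directly. It shows that $(d_2,d_0)$ itself induces a weak equivalence of (bi)simplicial sets: after a reduction via the realization lemma to the simplicial sets of objects, the projection $s_\bullet E(\A,\C,\B) \to s_\bullet \B$ is shown to be a weak equivalence with the correct fiber by verifying the hypotheses of a criterion for maps of simplicial sets (\cite[Lemma 1.4.1]{Wa}); the explicit simplicial homotopies in that proof live over individual simplices of the base $s_\bullet \B$, not between the two global endofunctors you name. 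To turn your outline into a proof you would need to supply this argument (or a substitute such as McCarthy's), and none of it is present.
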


We also recall a different version of the Additivity Theorem that will be used in later sections (see \cite[Proposition 1.3.2]{Wa}). Given a Waldhausen category $\C$ and Waldhausen subcategories $\A, \B \subset \C$, we denote by $E(\A, \C, \B)$ the Waldhausen subcategory of $\s_2 \C$ whose objects 
are cofiber sequences 
$$A \rightarrowtail C \twoheadrightarrow B, \ \text{ where } A \in \A, \ B \in \B,$$
and morphisms are morphisms of cofiber sequences 
\[
 \xymatrix{
 A \ar@{>->}[r] \ar[d]^f & C \ar@{->>}[r] \ar[d]^h & B \ar[d]^g \\
 A' \ar@{>->}[r] & C' \ar@{->>}[r] & B'
 }
\]
where $f$ is  in $\A$ and $g$ is  in $\B$.  

\begin{theorem}[Additivity Theorem, Version 2] \label{addit-thm-2}
Let $\C$ be a Waldhausen category and let $\A, \B \subset \C$ be Waldhausen subcategories. Then the exact functor
$$E(\A, \C, \B) \to \A \times \B, \ (A \rightarrowtail C \twoheadrightarrow B) \mapsto (A, B),$$ 
induces a homotopy equivalence $K(E(\A, \C, \B)) \xrightarrow{\simeq} K(\A) \times K(\B)$. As a consequence, the exact 
functors 
$$E(\A, \C, \B) \to \C, \ (A \rightarrowtail C \twoheadrightarrow B) \mapsto C,$$
and
$$E(\A, \C, \B) \to \C, \ (A \rightarrowtail C \twoheadrightarrow B) \mapsto A \vee B,$$
induce homotopic maps in $K$-theory. 
\end{theorem}

\begin{remark} 
Theorem \ref{addit-thm-2} holds more generally in the case where we simply have exact inclusion functors 
$\A \hookrightarrow \C$ and $\B \hookrightarrow \C$.   
\end{remark}

\subsection{The Approximation Theorem} The Approximation Theorem provides a useful method for showing that an exact functor $F: \C \to \C'$ induces a homotopy  equivalence in $K$-theory. The original formulation 
of Waldhausen \cite[Theorem 1.6.7]{Wa} stated two approximation properties for $F$ as criteria for a $K$-equivalence. Starting with the seminal work of Thomason--Trobaugh \cite{TT}, later treatments of the theorem studied these 
properties from the viewpoint of homotopical algebra, and connected the theorem with the invariance of $K$-theory under derived equivalences 
or equivalences of homotopy theories (see \cite{BM2} and \cite{Ci}). 

\smallskip

The following version of the Approximation Theorem for good Waldhausen categories
is due to Cisinski \cite[Proposition 2.14]{Ci}.

\begin{theorem}[Approximation Theorem] \label{approx-thm}
Let $F: \C \to \C'$ be an exact functor between good Waldausen categories. Suppose that 
$F$ satisfies the following two approximation properties:
\begin{itemize}
\item[(App1)] A morphism $f$ in $\C$ is a weak equivalence if and only if $F(f)$ is a weak equivalence in $\C'$.

\item[(App2)] For every morphism $f: F(X) \to Y$ in $\C'$, there is a weak equivalence $j: Y \to Y'$ in $\C'$, 
a morphism $f': X \to X'$ in $\C$ and a weak equivalence $q: F(X') \to Y'$ such that the square in $\C'$
\[
\xymatrix{
F(X) \ar[r]^f \ar[d]_{F(f')} & Y \ar[d]_{\sim}^j \\
F(X') \ar[r]_{\sim}^q & Y'
}
\]
commutes. 
\end{itemize}
Then the induced map $K(F): K(\C) \stackrel{\simeq}{\to} K(\C')$ is a homotopy equivalence. 
\end{theorem}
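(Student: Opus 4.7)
The plan is to reduce the theorem to showing that the induced map of bisimplicial sets $wN_\bullet \s_\bullet F \colon wN_\bullet \s_\bullet \C \to wN_\bullet \s_\bullet \C'$ is a weak equivalence, so that the induced map on $K$-theory (the loop space of its realization) is a homotopy equivalence. By the realization lemma for bisimplicial sets it suffices to prove that $N_\bullet w \s_n F$ is a weak equivalence for every $n \geq 0$. Since $\s_n$ of a good Waldhausen category is again good, this suggests a two-stage attack: first prove the special case $n = 1$ (the classical \emph{Approximation Lemma}), namely that $F$ induces a weak equivalence $N_\bullet w \C \to N_\bullet w \C'$ under (App1) and (App2); then show that $\s_n F$ again satisfies (App1) and (App2), so that the base case can be applied level-wise.

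For the base case, I would show the fibers of $N_\bullet w F$ are contractible by a variant of Quillen's Theorem A. Fix a $k$-simplex $\sigma$ of $N_\bullet w \C'$ (a zigzag of weak equivalences in $\C'$) and consider the category of approximations by simplices of $N_\bullet w \C$ together with a compatible weak equivalence in $\C'$. Hypothesis (App2) produces objects of this category, while (App1) combined with the factorization axiom allows one to connect any two approximations by a zigzag of morphisms, yielding contractibility of the nerve. The 2-out-of-3 property and the ability to factor arbitrary maps into a cofibration followed by a weak equivalence are essential in straightening the relevant zigzags and in promoting a pointwise weak equivalence to a morphism in the appropriate category.

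For the lift to level $n$, property (App1) for $\s_n F$ is immediate since weak equivalences in $\s_n \C$ are defined pointwise. Property (App2) is more delicate: given a staircase $Y_{\bullet,\bullet} \in \s_n \C'$ together with a map from $F(X_{\bullet,\bullet})$ where $X_{\bullet,\bullet} \in \s_n \C$, one constructs an approximating staircase $X'_{\bullet,\bullet} \in \s_n \C$ by induction on the filtration length, applying (App2) in $\C$ to extend the filtration by one term at each stage, factoring the resulting composite into a cofibration followed by a weak equivalence, and taking cofibers to propagate the approximation. The main obstacle is the bookkeeping in this inductive construction: one must ensure that the replacements produced at each step remain cofibrations in $\C$, that the pushout squares defining $\s_n \C$ are preserved, and that the resulting weak equivalences assemble into a single coherent diagram of staircases rather than an ad hoc collection of commuting squares. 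The goodness hypothesis (factorizations and 2-out-of-3), together with the gluing axiom for weak equivalences, is exactly what makes this construction go through at every $n$, after which stage (i) applied to each $\s_n F$ and the realization lemma complete the proof.
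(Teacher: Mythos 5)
This statement is not proved in the paper: it is recalled verbatim from Cisinski \cite[Proposition 2.14]{Ci}, which in turn refines Waldhausen's Theorem 1.6.7 by replacing the cylinder functor with non-functorial factorizations and by weakening (App2) to allow replacing $Y$ by a weakly equivalent $Y'$. Your outline -- realization lemma to reduce to each $\s_n$, a Theorem A/contractibility argument for the case $n=1$, and an inductive verification that $\s_n F$ inherits (App1) and (App2) -- is exactly the strategy of Waldhausen and Cisinski, and the points you flag as delicate (the zigzag-straightening in the base case under the weakened (App2), and the bookkeeping when propagating approximations up a staircase) are indeed where the substance of the cited proofs lies.
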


\begin{remark} \label{remark-approx}
Blumberg--Mandell \cite{BM2} and Cisinski \cite{Ci} have shown that the approximation properties are closely connected with the property that the functor $F \colon \C \to \C'$ induces an equivalence between the underlying homotopy theories. More precisely, if $F$ satisfies the approximation properties (App1) and (App2), then $F$ induces a Dwyer--Kan equivalence between the associated simplicial localizations. Moreover, the converse also holds if we restrict further to \emph{strongly saturated} Waldhausen categories, that is, Waldhausen categories $\C$ which have the property that a morphism $f$ in $\C$ defines an isomorphism in the homotopy category of $\C$ if and only if $f$ is a weak equivalence. See \cite[Th\'eor\`eme 3.25, Proposition 2.8, Th\'eor\`eme 2.9]{Ci} and \cite[Theorem 1.4, Theorem 1.5; also Theorem 6.4]{BM2}. 
\end{remark}

\subsection{The Fibration Theorem} The Fibration Theorem provides a way of obtaining long exact sequences of $K$-groups similarly to Quillen's Localization Theorem for the $K$-theory of abelian categories \cite{Qu}. It relates the $K$-theories of two different Waldhausen category structures on the same underlying category with cofibrations. 

\smallskip

The following version is a slight improvement of Waldhausen's original formulation \cite[Theorem 1.6.4]{Wa}. 

\begin{theorem}[Fibration Theorem] \label{fibration-thm}
Let 
$$\C_v = (\C, \mathrm{co}\C, v \C) \ \ \textit{and} \ \ \C_w = (\C, \mathrm{co} \C, w \C)$$ 
be Waldhausen categories that have the same underlying categories, the same cofibrations
and $v\C \subset w \C$. Suppose that $(\C, \mathrm{co} \C, w \C)$ is good. Let $\C^w$ be the full 
subcategory of $\C$ spanned by the objects $X \in \C$ such that $\ast \to X$ is in $w \C$. 

Then $\C^w= (\C^w, \mathrm{co}\C \cap \C^w, v\C \cap \C^w)$ is a full Waldhausen subcategory of 
$(\C, \mathrm{co}\C, v\C)$ and the sequence of exact inclusion functors 
$$\C^w \to \C_v \to \C_w$$ 
induces a homotopy fiber sequence 
$$K(\C^w) \to K(\C_v) \to K(\C_w).$$
(The null-homotopy of the composition is the canonical one given by the natural weak $w$-equivalence from the terminal object.)
\end{theorem}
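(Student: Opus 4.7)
The plan is to follow Waldhausen's original proof of the Fibration Theorem \cite[Theorem 1.6.4]{Wa}, paying attention to why the weaker hypothesis that only $(\C, \mathrm{co}\C, w\C)$ — and not necessarily $(\C, \mathrm{co}\C, v\C)$ — is assumed good still suffices. First I would verify that $\C^w$ is a Waldhausen subcategory of $\C_v$. The only non-trivial point is closure under cofibers: if $A \rightarrowtail B$ is a cofibration with $A, B \in \C^w$, then applying the gluing axiom for $w\C$ to the pushout $B/A = B \cup_A \ast$, together with the 2-out-of-3 property, shows that $\ast \to B/A$ lies in $w\C$, so $B/A \in \C^w$.

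The heart of the proof is a bisimplicial argument. Introduce the trisimplicial category $v.\,w.\,\s_\bullet \C$, whose $(p,q,n)$-entries are strings of $p$ composable $v$-equivalences followed by $q$ composable $w$-equivalences between objects of $\s_n \C$ (using $v\C \subset w\C$). The plan has two main steps: (i) show that the inclusion $v.\,\s_\bullet \C \hookrightarrow v.\,w.\,\s_\bullet \C$ as the $q=0$ slice induces an equivalence $|v.\,\s_\bullet \C| \xrightarrow{\simeq} |v.\,w.\,\s_\bullet \C|$; and (ii) show that the projection $|v.\,w.\,\s_\bullet \C| \to |w.\,\s_\bullet \C|$ is a quasi-fibration whose fiber over the basepoint is $|v.\,\s_\bullet \C^w|$. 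Combining (i) and (ii) then delivers the desired homotopy fiber sequence after looping.

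Step (ii) is a relatively standard application of Quillen's Theorem B: the fiber over a vertex of $|w.\,\s_\bullet \C|$ that is connected to the basepoint by $w$-equivalences consists, up to weak equivalence, of those objects in $v.\,\s_\bullet \C$ that are pointwise $w$-equivalent to the zero object, i.e., precisely the objects of $v.\,\s_\bullet \C^w$. Step (i) is the main obstacle. Waldhausen handles it via an Additivity-type reduction: working one extra $w$-equivalence at a time, one uses the factorization axiom on $w\C$ to replace each $w$-equivalence by a cofibration with cofiber in $\C^w$, and then identifies the resulting contribution via Theorem \ref{addit-thm-2}; the contractibility one needs follows from the fact that the $\C^w$-summand has, under the $w$-structure, a weakly trivial basepoint.

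The reason our weakened hypothesis suffices is precisely that both the factorizations and the 2-out-of-3 property are used only in the $w$-direction: one factors $w$-equivalences through cofibrations, and applies 2-out-of-3 only to diagrams of $w$-equivalences (for instance, to identify cofibers of $w$-equivalences as $w$-acyclic). Confirming that Waldhausen's argument never secretly requires factorizations or 2-out-of-3 for $v\C$ — only for $w\C$ — is the delicate bookkeeping task that establishes the slight generalization claimed in the statement.
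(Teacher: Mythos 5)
Your instinct --- rerun \cite[Theorem 1.6.4]{Wa} and check which hypotheses are really needed --- is the right one, and it is essentially what the paper does: it cites \cite[Theorem 1.6.4]{Wa} together with \cite[Theorem A.3]{Sc} (which replaces the cylinder axiom by non-functorial factorizations) and then supplies one genuinely new ingredient. Your proposal misses that ingredient. Waldhausen's 1.6.4 assumes that $w\C$ satisfies the \emph{extension axiom} in addition to saturation and the cylinder axiom; the theorem here assumes only that $(\C,\mathrm{co}\C,w\C)$ is good. The extension axiom enters Waldhausen's proof at exactly the step you gloss over: identifying the category $\overline{w}_m\C$ of strings of $m$ trivial cofibrations with the category $F_m(\C,\C^w)$ of cofibration strings whose successive cofibers lie in $\C^w$ (one inclusion is automatic from the gluing/saturation axioms; the reverse needs extension). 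Your phrase ``replace each $w$-equivalence by a cofibration with cofiber in $\C^w$, and then identify the resulting contribution via additivity'' silently uses this identification. The paper repairs it without the extension axiom by noting that the inclusion $v\overline{w}_m\s_\bullet\C\subset v\s_\bullet F_m(\C,\C^w)$ is always a $K$-equivalence, since \emph{both} sides are computed by the Additivity Theorem via $\overline{w}_m\C\simeq E(\C,\overline{w}_m\C,\s_m\C^w)$. Relatedly, the ``delicate bookkeeping'' you single out --- that conditions are imposed on $w\C$ but not on $v\C$ --- is not a deviation from Waldhausen at all: his hypotheses already concern only $w\C$.

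Separately, your two-step architecture does not hold together. For the bisimplicial object with $v$-equivalences in one direction and $w$-equivalences in the other, the swallowing lemma applies in the $w$-direction (because $v\C\subset w\C$), giving $|v.w.\s_\bullet\C|\simeq|w.\s_\bullet\C|$; if your step (i) also held, then $|v\s_\bullet\C|\to|w\s_\bullet\C|$ would be an equivalence and the fiber would be trivial. In the actual argument the fibration one constructs runs the other way: $|v\s_\bullet\C|\to|[m]\mapsto v\s_\bullet F_m(\C,\C^w)|\to|v\s_\bullet\s_\bullet\C^w|$, with the total space identified with $|w\s_\bullet\C|$ via swallowing and factorizations, the fiber identified by the Additivity Theorem (not by Quillen's Theorem B), and the sequence then rotated using $\Omega|v\s_\bullet\s_\bullet\C^w|\simeq|v\s_\bullet\C^w|$ to produce $K(\C^w)\to K(\C_v)\to K(\C_w)$. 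Calling the fiber identification ``a relatively standard application of Theorem B'' conceals the entire content of the theorem, which is an additivity statement; likewise your claim that the $\C^w$-summands contribute contractibly is wrong --- under the $v$-structure they are not contractible, and they are precisely what assembles into the fiber. Either cite \cite{Wa} and \cite{Sc} and add only the extension-axiom repair, as the paper does, or write out the full chain of equivalences with the fibration oriented correctly.
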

\begin{proof}
This is essentially \cite[Theorem 1.6.4]{Wa}. The replacement of Waldhausen's cylinder axiom with the existence of (non-functorial) factorizations
was worked out in \cite[Theorem A.3]{Sc}. The assumption  in \cite[Theorem 1.6.4]{Wa} that $\C_{w}$ satisfies the extension axiom may be omitted 
by making a small modification in the proof that involves an additional application of the Additivity Theorem (see Addendum \ref{addendum_fibration} below).  
\end{proof}

\begin{addendum} \label{addendum_fibration} We follow the notational conventions used in \cite[Theorem 1.6.4]{Wa}. We recall that the proof of \cite[Theorem 1.6.4]{Wa} uses the extension axiom in order to identify $v \overline{w} \s_{\bullet} \C$ with $v \s_{\bullet} F_{\bullet} (\C, \C^w)$. But the inclusion $v \overline{w} \s_{\bullet} \C \subset v \s_{\bullet} F_{\bullet} (\C, \C^w)$ is always a weak equivalence because we have weak equivalences for each $n \geq 0$,
$$
\xymatrix{
& v \s_{\bullet} \C \times v \s_{\bullet} \s_n \C^w  \ar[rd]^{\simeq} \ar[ld]_{\simeq} & \\
v \s_{\bullet} \overline{w}_n \C  = v \overline{w}_n \s_{\bullet} \C  \ar[rr] && v \s_{\bullet} F_n(\C, \C^w)
}
$$
by the Additivity Theorem (cf. the proof of \cite[Proposition 1.5.5]{Wa}). Here $\overline{w}_n \C$ denotes the Waldhausen subcategory of diagrams consisting of $w$-equivalences in $\C$ of the form:
$$c_0 \stackrel{\sim}{\rightarrowtail} c_1 \stackrel{\sim}{\rightarrowtail} \cdots \stackrel{\sim}{\rightarrowtail} c_n$$
with the usual cofibrations and where the ($v$-)weak equivalences are defined pointwise. $\overline{w}_n \C$ has a Waldhausen subcategory which is  identified with $\C$, embedded as constant diagrams, and a Waldhausen subcategory identified with $\s_{n} \C^w$, embedded as diagrams with $c_0 = *$. The obvious projection functor $E(\C, \overline{w}_n \C, \s_n \C^w) \to \overline{w}_n \C$ admits a section $\overline{w}_n \C \to E(\C, \overline{w}_n \C, \s_n \C^w)$. Then the pair of exact functors 
$$\vee \colon \C \times \s_n \C^w \rightarrow E(\C, \overline{w}_n \C, \s_n \C^w) \to \overline{w}_n \C$$
and 
$$\overline{w}_n \C \to E(\C, \overline{w}_n \C, \s_n \C^w) \to \C \times \s_n \C^w$$
induces inverse homotopy equivalences in $K$-theory as a consequence of the Additivity Theorem (Theorem \ref{addit-thm-2}).
\end{addendum}

\section{The category of $\A$-filtered objects in $\C$}  \label{filtered-objects}

\subsection{The Waldhausen category $\s_n \C_{\A}$} Let $\C$ be a Waldhausen category. Consider the functor $[n+1] \to [n]$ which sends $i \mapsto i$, if $i \leq n$, and $n+1 \mapsto n$. This defines a functor 
$$\mathrm{Ar}[n+1] \to \mathrm{Ar}[n]$$
which induces an exact inclusion functor of Waldhausen categories 
$$i_n: \s_n \C \hookrightarrow \s_{n+1} \C.$$
The functor $i_n$ simply repeats the last column of a staircase diagram -- it is the $n$-th degeneracy map. We define 
$$\s_{\infty} \C \colon = \mathrm{colim} \{ \C \stackrel{i_1}{\to} \s_2 \C \stackrel{i_2}{\to} \cdots \stackrel{i_n}{\to} \s_{n+1} \C \stackrel{i_{n+1}}{\longrightarrow} \cdots \}.$$
The subcategory of cofibrations (resp. weak equivalences) in $\s_{\infty} \C$ is defined to be the colimit of the 
categories of cofibrations (resp. weak equivalences) in $\s_n \C$ for all $n \geq 1$. The following proposition is 
now an easy observation. 

\begin{proposition} \label{good-1}
Let $\C$ be a Waldhausen category. Then $\s_{\infty} \C$ endowed with the subcategories of cofibrations and weak 
equivalences from the Waldhausen categories $\s_n \C$, for $n \geq 1$, is a Waldhausen category. Moreover, 
if $\C$ is good, then so is $\s_{\infty} \C$.
\end{proposition}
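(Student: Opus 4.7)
The plan is to check each of the Waldhausen axioms directly by exploiting the fact that $\s_\infty \C$ is a filtered colimit of Waldhausen categories along the exact inclusions $i_n$. The key observation I would use throughout is that any finite diagram in $\s_\infty \C$ (in particular any morphism, any span, any composable pair of morphisms, any pushout diagram) lifts to some $\s_n \C$, because the degeneracy inclusion $i_n$ is fully faithful (its image consists of staircase diagrams whose last cofiber is the zero object, and this image inherits all categorical structure from $\s_n \C$). Hence every axiom for $\s_\infty \C$ can be reduced to the corresponding statement for some finite $\s_n \C$, where it is known to hold.

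More concretely, first I would check that the zero object $\ast \in \C \subseteq \s_\infty \C$ remains a zero object, that $\mathrm{co}\,\s_\infty \C$ contains all isomorphisms and all morphisms out of $\ast$, and that $\mathrm{w}\,\s_\infty \C$ contains all isomorphisms and is closed under composition. Each of these follows immediately from the corresponding property at finite stage together with the fact that $i_n$ preserves cofibrations, weak equivalences and the zero object. For closure of cofibrations under pushouts and for the gluing axiom, I would lift the given span (respectively, the given square of pushout data) to some $\s_n \C$ using filteredness of the colimit, form the pushout there (which exists and is a cofibration, resp. a weak equivalence, since $\s_n \C$ is a Waldhausen category), and then observe that $i_n$ preserves this pushout so it gives a pushout in $\s_\infty \C$ with the required properties.

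For the second statement, assume $\C$ is good. By the reference to \cite[A.9]{Sc} and \cite{Ci} cited just before the proposition, each $\s_n \C$ is good. To verify the 2-out-of-3 property in $\s_\infty \C$, take any two composable morphisms $f,g$; they lie in some $\s_n \C$, where 2-out-of-3 holds, and the property of being a weak equivalence is preserved and detected by $i_n$. To verify the existence of factorizations, take any morphism in $\s_\infty \C$, lift it to some $\s_n \C$, factor it there as a cofibration followed by a weak equivalence, and push the factorization forward to $\s_\infty \C$; both properties are preserved by the exact functor $i_n$.

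Since the author calls this an easy observation, there is no real obstacle here beyond a careful bookkeeping check that the colimit definitions of $\mathrm{co}\,\s_\infty \C$ and $\mathrm{w}\,\s_\infty \C$ are compatible with the formation of finite pushouts and with detecting weak equivalences. The only point where I would be mildly careful is the pushout axiom: I need that a pushout square formed at stage $n$ is still a pushout after applying $i_n$, which is automatic because $i_n$ is exact and in particular preserves pushouts along cofibrations.
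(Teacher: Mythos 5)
Your argument is correct and is exactly the standard filtered-colimit reduction that the paper leaves implicit (the proposition is stated without proof as "an easy observation"): since the $i_n$ are fully faithful exact inclusions and the cofibrations and weak equivalences of $\s_\infty\C$ are defined as the colimits of those at finite stages, every axiom reduces to the corresponding axiom in some $\s_n\C$, and goodness of $\s_n\C$ is the cited result of Schlichting and Cisinski. Nothing is missing.
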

 
Let $\A = (\A_i)_{i \geq 1}$ be an (ordered) collection of Waldhausen subcategories of $\C$. We emphasize here that repetitions among the $\A_i$'s are allowed. In particular, the case where $\A_i = \A$ for a single Waldhausen subcategory $\A \subset \C$ will be an important example. 

We consider a subcategory $\s_n \C_{\A}$ of $\s_n \C$ defined as follows. The objects of $\s_n \C_{\A}$ are those objects of $\s_n \C$
$$X_{\bullet, \bullet}: \mathrm{Ar}[n] \to \C$$ 
such that $X_{i-1, i} \in \A_i$ for $1 \leq i  \leq n$. The morphisms in $\s_n \C_{\A}$ are morphisms $F_{\bullet, \bullet}: X_{\bullet, \bullet} \to X'_{\bullet, \bullet}$ 
in $\s_n \C$ such that $F_{i-1,i}$ is in $\A_i$ for each $i \geq 1$. This defines a Waldhausen subcategory of $\s_n \C$ for any $n \geq 1$.  Note that $\s_1 \C_{\A}$ is canonically identified with $\A_1$.

The exact inclusion $i_n$ restricts to an exact inclusion 
between the respective Waldhausen subcategories,
$$i_{n, \A}: \s_n \C_{\A} \hookrightarrow \s_{n+1} \C_{\A}.$$
Similarly, we consider the corresponding Waldhausen subcategory of $\s_{\infty} \C$:
$$\s_{\infty} \C_{\A} \colon = \mathrm{colim} \{ \A_1 \xrightarrow{i_{1, \A}} \s_2 \C_{\A} \xrightarrow{i_{2, \A}} 
\cdots \xrightarrow{i_{n, \A}} \s_{n+1} \C_{\A} \longrightarrow \cdots \}.$$

Given a collection of Waldhausen subcategories $\A = (\A_i)_{i \geq 1}$ of $\C$, we write $\A[n]$ for the new collection of Waldhausen categories obtained after 
shifting by $n$, i.e., $\A[n]_i = \A_{n + i}$ for $i \geq 1$.

\begin{proposition} \label{ho-eq-1}
Let $\C$ be a Waldhausen category and let $\A = (\A_i)_{i \geq 1}$ be a collection of Waldhausen subcategories of $\C$. For each $n \geq 1$, the exact functor 
$$q_n: \s_n \C_{\A} \longrightarrow \prod_{1}^{n} \A_i \ , \ (X_{\bullet, \bullet}) \mapsto (X_{0,1}, \cdots, X_{n-1, n}),$$
induces a homotopy equivalence
$$K(\s_n \C_{\A}) \xrightarrow{\simeq} \prod_{1}^n K(\A_i).$$
Moreover, these induce also a homotopy equivalence $K(\s_{\infty} \C_{\A}) \simeq \underrightarrow{\mathrm{colim}}_n \ \prod_{1}^n K(\A_i).$
\end{proposition}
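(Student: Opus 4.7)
The plan is to prove the first equivalence by induction on $n$ using the Additivity Theorem, and to deduce the colimit statement from the fact that $K$-theory commutes with filtered colimits of Waldhausen categories along exact functors. The base case $n = 1$ is immediate, since $\s_1 \C_\A \simeq \A_1$ canonically and $q_1$ realizes this equivalence.

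For the inductive step, set $\A' := (\A_1, \ldots, \A_{n-1})$ and introduce four exact functors: the truncation $r \colon \s_n \C_\A \to \s_{n-1} \C_{\A'}$, $X \mapsto X|_{\mathrm{Ar}[n-1]}$; the last-cofiber functor $c \colon \s_n \C_\A \to \A_n$, $X \mapsto X_{n-1, n}$; the degeneracy $i_{n-1} \colon \s_{n-1} \C_{\A'} \hookrightarrow \s_n \C_\A$; and the embedding $\iota \colon \A_n \hookrightarrow \s_n \C_\A$ sending $A$ to the staircase $A^{\infty}$ with $A$ placed in the final column and $\ast$ elsewhere. Package these as exact functors $(r, c) \colon \s_n \C_\A \to \s_{n-1} \C_{\A'} \times \A_n$ and $(i_{n-1}, \iota)^{\vee} \colon \s_{n-1} \C_{\A'} \times \A_n \to \s_n \C_\A$, the latter sending $(X, A) \mapsto i_{n-1}(X) \vee \iota(A)$. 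A direct inspection shows $(r, c) \circ (i_{n-1}, \iota)^{\vee} = \mathrm{id}$ on the nose.

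For the other composite, each $X \in \s_n \C_\A$ sits naturally in a cofiber sequence
\[
i_{n-1}(r(X)) \rightarrowtail X \twoheadrightarrow \iota(c(X))
\]
inside $\s_n \C_\A$, and this assembles into a cofiber sequence of exact endofunctors of $\s_n \C_\A$. By the Additivity Theorem (Theorem \ref{addit-thm}), this gives $K(\mathrm{id}) = K(i_{n-1} \circ r) + K(\iota \circ c)$ as maps $K(\s_n \C_\A) \to K(\s_n \C_\A)$. Since $\vee$-coproducts induce sums on $K$-theory, this is precisely the identity $K((i_{n-1}, \iota)^{\vee}) \circ K((r, c)) = K(\mathrm{id})$. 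Hence $(r, c)$ induces a homotopy equivalence on $K$-theory, and combining with the inductive hypothesis $K(\s_{n-1} \C_{\A'}) \simeq \prod_{1}^{n-1} K(\A_i)$ yields $K(\s_n \C_\A) \simeq \prod_{1}^n K(\A_i)$, with the equivalence visibly induced by $q_n$. Passing to the filtered colimit along the degeneracies $i_{n, \A}$ and invoking the commutation of $K$-theory with such colimits gives the final statement.

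The main technical obstacle, I expect, is the careful verification of the staircase cofiber sequence $i_{n-1} \circ r \rightarrowtail \mathrm{id} \twoheadrightarrow \iota \circ c$ within $\s_n \C_\A$: one has to check that the truncation extended trivially to the last column is genuinely a cofibration of staircase diagrams whose cofiber takes the prescribed form, and that the resulting successive cofibers still lie in the required $\A_i$ (which uses that $\ast \in \A_i$ for each $i$, automatic from the definition of a Waldhausen subcategory).
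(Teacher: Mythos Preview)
Your argument is correct and follows essentially the same approach as the paper: both proofs proceed by induction on $n$, splitting $\s_n\C_{\A}$ via a cofiber sequence and applying the Additivity Theorem. The only cosmetic difference is that the paper peels off the \emph{first} factor, exhibiting an equivalence $\s_n\C_{\A}\simeq E(\A_1,\s_n\C_{\A},\s_{n-1}\C_{\A[1]})$ and invoking Theorem~\ref{addit-thm-2} directly, whereas you peel off the \emph{last} factor via the sequence $i_{n-1}\circ r\rightarrowtail\mathrm{id}\twoheadrightarrow\iota\circ c$ and use the ``sum of functors'' corollary of additivity; your orientation is arguably better adapted to the colimit statement since it meshes directly with the degeneracies $i_{n,\A}$.
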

\begin{proof} 
The claim is obvious for $n = 1$. For $n \geq 2$, we consider the following inclusions of Waldhausen subcategories: $\A_1 \subset \s_n \C_{\A}$, embedded as constant filtered objects, and $\s_{n-1} \C_{\A[1]} \subset \s_n \C_{\A}$, embedded as filtered objects that begin with the zero object in $\A_1$. In addition, these inclusion functors admit retractions, given respectively by the exact functors:
$$r_1 \colon \s_n \C_{\A} \to \A_1 \subset \s_n \C_{\A}, \ \ \ X_{\bullet, \bullet} \mapsto (\ast \rightarrowtail X_{0,1} \stackrel{=}{\rightarrowtail} \cdots \stackrel{=}{\rightarrowtail} X_{0,1}),$$
$$r_2 \colon \s_n \C_{\A} \to \s_{n-1} \C_{\A[1]} \subset \s_n \C_{\A}, \ \ X_{\bullet, \bullet} \mapsto (\ast \rightarrowtail \ast \rightarrowtail X_{1,2} \rightarrowtail \cdots \rightarrowtail X_{1,n}),$$
which assemble to define an exact functor
$$\s_n \C_{\A} \longrightarrow E(\A_1, \s_n \C_{\A}, \s_{n-1} \C_{\A[1]}),\ \ X_{\bullet, \bullet} \mapsto \big(r_1(X_{\bullet, \bullet}) \rightarrowtail X_{\bullet, \bullet} \twoheadrightarrow r_2(X_{\bullet, \bullet})\big).$$
Then the Additivity Theorem (Theorem \ref{addit-thm-2}) shows that the pair of exact functors 
$$(r_1, r_2) \colon \s_n \C_{\A} \rightleftarrows \A_1 \times \s_{n-1} \C_{\A[1]} \colon \vee$$
induces inverse homotopy equivalences in $K$-theory. Thus, we obtain inductively homotopy equivalences 
$$K(\s_n \C_{\A}) \simeq K(\A_1) \times K(\s_{n-1} \C_{\A[1]}) \simeq \cdots \simeq \prod_1^n K(\A_i).$$
By direct inspection, we see that the composite homotopy equivalence is defined by the exact functor
\begin{equation} \label{identification-1}
(X_{\bullet, \bullet}) \mapsto (X_{0,1}, X_{1,2}, \cdots, X_{n-1, n}).
\end{equation}
A homotopy inverse $\prod_1^n K(\A_i) \to K(\s_n \C_{\A})$ is given by the exact functor that includes those filtered 
objects which are defined by successive trivial cofiber sequences: 
$$(A_1, A_2, \cdots, A_n) \mapsto (\ast \rightarrowtail A_1 \rightarrowtail A_1 \vee A_2 \rightarrowtail \cdots \rightarrowtail \bigvee_{1 \leq i \leq n} A_i).$$
Moreover, we have commutative diagrams of exact functors:
$$
\xymatrix{
\s_n \C_{\A} \ar@/^/[d] \ar[r]^{i_{n, \A}} & \s_{n+1} \C_{\A} \ar@/^/[d] \\
\prod_1^{n-1} \A \ar@/^/[u] \ar[r] & \prod_1^n \A \ar@/^/[u] 
}
$$ 
where the bottom functor is the canonical inclusion functor, given on objects by $(A_1, \cdots, A_{n-1}) \mapsto (A_1, \cdots, A_{n-1}, \ast)$. Then the
case $n = \infty$ follows immediately since $\underrightarrow{\mathrm{hocolim}}_n \ K(\s_n \C_{\A}) \xrightarrow{\simeq} K(\s_{\infty} \C_{\A}).$
\end{proof}

\subsection{The Waldhausen category $\sgl_n \C_{\A}$} 
Let $\C$ be a Waldhausen category and let $\A = (\A_i)_{i \geq 1}$ be a collection of Waldhausen subcategories of $\C$. For any $n \geq 1$ or $n=\infty$, the category $\s_n \C_{\A}$  can be endowed with the following weaker type of weak equivalence. 

\begin{definition} \label{global-weq-def} Let $n \geq 1$ be an integer or $n=\infty$. A morphism $F_{\bullet, \bullet}: (X_{\bullet, \bullet}) \to (Y_{\bullet, \bullet})$ in $\s_n \C_{\A}$ (resp. in $\s_{\infty} \C_{\A}$) is called a \emph{total 
weak equivalence} if $F_{0, n}: X_{0,n} \to Y_{0,n}$ is a weak equivalence in $\C$ (resp. for all large enough $n$). 
\end{definition}

Every weak equivalence in $\s_n \C_{\A}$, for $n \in \{1, 2, \cdots, \infty\}$, is also a total weak 
equivalence; this new class of weak equivalences defines a new Waldhausen category structure on the underlying 
category of $\s_{n} \C_{\A}$ (with the same subcategory of cofibrations).
We denote this new Waldhausen category by $\sgl_n \C_{\A}$. Note that the inclusion functors
$$\widehat{i}_{n, \A}: \sgl_n \C_{\A} \hookrightarrow \sgl_{n+1} \C_{\A}$$
are again exact and their colimit is $\sgl_{\infty} \C_{\A}$. The Waldhausen category $\sgl_{\infty} \C_{\A}$ is 
exactly the Waldhausen category of filtrations (or ``unscrewings'')  of objects in $\C$ by objects in the (ordered) collection of subcategories $\A$. 

\medskip

In order to ensure that the Waldhausen categories $\s_n \C_{\A}$ and $\sgl_n \C_{\A}$ are good when $\C$ is, it will be convenient 
to assume in addition that $\C$ admits a cylinder functor which satisfies the cylinder axiom in the sense of \cite{Wa}. This structure equips $\C$ with factorizations 
which are functorial in $\C^{[1]}$ and additionally satisfy certain exactness properties. In particular, restricting these functorial factorizations to the morphisms of 
the form $(X \to \ast)$ yields an exact functor (``cone''): 
$$C \colon \C \to \C, \ \ X \mapsto C(X),$$
and also an exact functor (``suspension''):
$$\Sigma \colon \C \to \C, \ \ X \mapsto \Sigma(X) \colon = C(X)/X.$$ 
We refer to \cite[1.6]{Wa} for more details. The assumption about the existence of cylinder functors is not necessary in all of our proofs, but it will be convenient for some main examples of Waldhausen pairs $(\C, \A)$ for which the inclusion $\A \subset \C$ is 
not homotopically fully faithful.     

\begin{definition} \label{good-Wald-pair-def}
A Waldhausen pair $(\C, \A)$ is called \emph{good} if $\C$ has  the 2-out-of-3 property and admits a cylinder functor which satisfies the cylinder axiom 
and restricts to a cylinder functor on $\A$. 
\end{definition}

Note that if $(\C, \A)$ is a good Waldhausen pair, then both $\C$ and $\A$ are good Waldhausen categories. We will be also interested in the following type of a Waldhausen pair, which may fail to be good in general, but it has different strong homotopical 
properties.    

\begin{definition} \label{replete-def}
A Waldhausen subcategory $\A \subset \C$ is called \emph{replete} if it is a full subcategory and has the following property: given $X \in \A$, then 
any object $Y \in \C$ which is weakly equivalent to $X$ is also in $\A$. In this case, we also say that the Waldhausen pair $(\C, \A)$ is replete. 
\end{definition} 

\begin{lemma} \label{additional-props}
Let $\C$ be a good Waldhausen category and let $\A = (\A_i)_{i \geq 1}$ be a collection of Waldhausen subcategories of $\C$. Suppose that $\C$ has 
a cylinder functor which satisfies the cylinder axiom. 
\begin{itemize}
 \item[(i)] The Waldhausen categories $\s_n \C_{\A}$ and $\sgl_n \C_{\A}$ have the 2-out-of-3 property for any $n \in \{1,2,\dots, \infty\}$. 
\item[(ii)] Suppose that $(\C, \A_i)$ is replete for every $i \geq 1$ and that $\Sigma \colon \C \to \C$ sends $\A_i$ to $\A_{i+1}$. Then $\sgl_{\infty} \C_{\A}$ admits factorizations. In particular, $\sgl_{\infty} \C_{\A}$ is a good Waldhausen category.  
 \item[(iii)] Let $n \geq 1$ be an integer or $n=\infty$. Suppose that $(\C, \A_i)$ is a good Waldhausen pair for $1 \leq i \leq n$. Then $\s_n \C_{\A}$ and $\sgl_n \C_{\A}$ admit (functorial) factorizations. In particular, the Waldhausen categories $\s_n \C_{\A}$ and $\sgl_n \C_{\A}$ are good Waldhausen categories. 
\end{itemize}
\end{lemma}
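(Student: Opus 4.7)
For part (i), both $\s_n \C_{\A}$ and $\sgl_n \C_{\A}$ inherit the 2-out-of-3 property from $\C$. A morphism in $\s_n \C_{\A}$ is a weak equivalence precisely when each of its components in the staircase is a weak equivalence in $\C$, while a morphism in $\sgl_n \C_{\A}$ is a weak equivalence precisely when its component at position $(0,n)$ is. Thus, for a triple of composable morphisms in $\s_n \C_{\A}$ (resp.\ $\sgl_n \C_{\A}$) of which two are weak equivalences, the third acquires the required property pointwise (resp.\ at position $n$) by the 2-out-of-3 property of $\C$, and remains a morphism in the subcategory by assumption. For $n = \infty$, any finite configuration of morphisms lies in some $\s_m \C_{\A}$ or $\sgl_m \C_{\A}$ with $m$ finite, reducing to the finite case.

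For part (iii), the hypothesis that each pair $(\C, \A_i)$ is good provides a cylinder functor on $\C$ that restricts to each $\A_i$. My plan is to construct functorial factorizations on $\s_n \C_{\A}$ inductively on the filtration length: at stage $i$, factor the morphism between the successive cofibers using the restricted cylinder functor on $\A_i$, and glue the resulting cylinder to the previously built intermediate object via the pushout structure of the staircase. The exactness axioms of the cylinder functor ensure that the produced cofibration has its cofiber in $\A_i$ at position $i$, so the new intermediate object lies in $\s_n \C_{\A}$. The same factorization then serves $\sgl_n \C_{\A}$, whose cofibrations coincide with those of $\s_n \C_{\A}$ and whose class of weak equivalences is more permissive.

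Part (ii) is the main obstacle. Here the pairs $(\C, \A_i)$ are only assumed replete, together with the shift condition $\Sigma \A_i \subset \A_{i+1}$, so the cylinder functor on $\C$ need not restrict to any $\A_i$. Given $F \colon X \to Y$ in $\sgl_n \C_{\A}$, the natural first attempt is to take $Z_{0,i}$ to be the mapping cylinder of $F_{0,i}$ from the cylinder on $\C$: by the gluing axiom, the successive cofibers satisfy $Z_{i-1,i} \simeq Y_{i-1,i} \in \A_i$, and by repleteness $Z_{i-1,i} \in \A_i$; so $Z \in \sgl_n \C_{\A}$ and $Z \to Y$ is a levelwise (hence eventual) weak equivalence. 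The difficulty is that $X \rightarrowtail Z$ is only a cofibration in the ambient $\s_n \C$, not in $\s_n \C_{\A}$: its staircase cofibers are the mapping cones in $\C$ of the induced maps $F_{i-1,i} \colon X_{i-1,i} \to Y_{i-1,i}$, which generally fail to lie in $\A_i$. To resolve this, I would use the Puppe decomposition of each such mapping cone as an extension of $\Sigma X_{i-1,i}$ (in $\A_{i+1}$ by the shift hypothesis) by $Y_{i-1,i}$ (in $\A_i$ by construction). Refining the staircase and working in $\sgl_m \C_{\A}$ for $m$ suitably larger than $n$, I would interleave these two-step Puppe pieces with $X$'s filtration so that the cofiber at each position $j$ lies in $\A_j$, while preserving both the cofibration $X \rightarrowtail Z'$ and the eventual weak equivalence $Z' \simeq Y$. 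The main technical challenge is combinatorial: arranging the refined cofibers in the correct positions of an increasing filtration and verifying the pushout compatibilities throughout the staircase. The $\Sigma$-shift is precisely what enables such rearrangement, and passage to $\sgl_{\infty}$ provides the room for the enlarged filtration.
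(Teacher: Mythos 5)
Parts (i) and (iii) are fine and essentially match the paper: for (iii) the paper simply invokes Waldhausen's construction of a cylinder functor on $\s_n \C$ from one on $\C$ and notes that the resulting functorial factorizations restrict to $\s_n \C_{\A}$ because the cylinder functor restricts to each $\A_i$; your inductive gluing is a hands-on version of the same argument.

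Part (ii), however, is a plan rather than a proof, and the plan as stated does not go through. You correctly diagnose the obstruction (the level-$i$ cofiber of $X \rightarrowtail \mathrm{cyl}(F)$ is the mapping cone of $F_{i-1,i}$, an extension of $\Sigma X_{i-1,i} \in \A_{i+1}$ by $Y_{i-1,i} \in \A_i$), but your proposed fix --- refine the staircase and interleave the two Puppe pieces at new positions --- is blocked by the membership constraint itself: the cofiber of $Z'_{0,j-1} \rightarrowtail Z'_{0,j}$ must lie in $\A_j$, so the piece $Y_{i-1,i}$ can only sit at position $i$ and $\Sigma X_{i-1,i}$ only at position $i+1$; lengthening the filtration gives no freedom to reposition them, and the level-$(i+1)$ pieces of $X$ and $Y$ already occupy slot $i+1$. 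Moreover, the hypotheses of (ii) do not include closure of $\A_i$ under extensions (that is only assumed later, in Section 6), so the Puppe cofiber sequence alone does not place the mapping cone in any $\A_j$. The paper instead \emph{superposes} rather than interleaves: it factors $F + \mathrm{Id} \colon X \vee Y \to Y$ in $\s_{\infty}\C$ as $X \vee Y \rightarrowtail Z \xrightarrow{\sim} Y$ (a levelwise weak equivalence, so $Z \in \s_{\infty}\C_{\A}$ by repleteness) and sets $Z'_{0,i} := X_{0,i} \cup_{X_{0,i-1}} Z_{0,i-1} \cup_{Y_{0,i-1}} Y_{0,i}$, a mapping-cylinder construction delayed by one filtration step. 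The successive cofiber of $Z'$ at position $i$ is then weakly equivalent to the coproduct $X_{i-1,i} \vee \Sigma X_{i-2,i-1} \vee Y_{i-1,i}$, all three factors lying in $\A_i$ (the middle one by the shift hypothesis), so $Z' \in \s_{\infty}\C_{\A}$ with no extension problem, $X \rightarrowtail Z'$ is a cofibration there, and $Z' \to Y$ is an eventual weak equivalence. You would need to reproduce this shifted construction, or an equally explicit one; the combinatorial step you defer is exactly where the content of (ii) lies.
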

\begin{proof}
(i) is obvious. (ii): Let $F_{\bullet, \bullet} \colon X_{\bullet, \bullet} \to Y_{\bullet, \bullet}$ be 
a morphism in $\sgl_{\infty} \C_{\A}$. By Proposition \ref{good-1}, there is a factorization of the morphism in $\s_{\infty} \C$: 
$$F_{\bullet, \bullet} + \mathrm{Id} \colon X_{\bullet, \bullet} \vee Y_{\bullet, \bullet} \rightarrow Y_{\bullet, \bullet}$$
into a cofibration followed by a weak equivalence (both in the sense of $\s_{\infty} \C$)
$$X_{\bullet, \bullet} \vee Y_{\bullet, \bullet} \rightarrowtail Z_{\bullet, \bullet} \stackrel{\sim}{\rightarrow} Y_{\bullet, \bullet}.$$
Then we define a new object $Z'_{\bullet, \bullet} \in \s_{\infty} \C$ by 
$$Z'_{0, i} : = X_{0,i} \cup_{X_{0,i-1}} Z_{0,i-1} \cup_{Y_{0,i-1}} Y_{0,i}.$$
Using our assumptions on $(\C, \A)$, it follows that the object $Z'_{\bullet, \bullet}$ is in $\s_{\infty} \C_{\A}$. Moreover, the canonical morphisms $(X_{0,i} \to Z'_{0,i} \to Y_{0,i})$ define the required factorization in $\sgl_{\infty}\C_{\A}$:
$$X_{\bullet, \bullet} \rightarrowtail Z'_{\bullet, \bullet} \xrightarrow{\sim} Y_{\bullet, \bullet}.$$
  (iii): The functorial factorizations in $\C$, given by the cylinder functor, produce functorial factorizations in $\s_n \C$ \cite[1.6.1]{Wa}. 
Using our assumptions on $(\C, \A_i)$, it follows that these restrict to functorial factorizations on the Waldhausen subcategory $\s_n \C_{\A}$, as required. The factorizations in $\s_n \C_{\A}$ define also factorizations in $\sgl_n\C_{\A}$.
\end{proof}

\subsection{A homotopy fiber sequence} 
For any $n \in \{1,2, \cdots, \infty\}$, let $\s^w_n \C_{\A}$ denote the full Waldhausen subcategory of $\s_n \C_{\A}$ spanned by those objects $X_{\bullet, \bullet} \in \s_n \C_{\A}$ which are weakly equivalent in $\sgl_n \C_{\A}$ to the zero object,  that is, the morphism $\ast \to X_{\bullet, \bullet}$ is a 
total weak equivalence. Clearly, if $\s_n \C_{\A}$ is good, then so is $\s^w_n \C_{\A}$. The exact inclusion functors $i_n$ restrict to exact functors 
$$i^w_{n, \A} \colon \s^w_n \C_{\A} \hookrightarrow \s^w_{n+1} \C_{\A}$$
whose colimit is $\s^w_{\infty} \C_{\A}$. For $n=1$, $\s^w_1 \C_{\A}$ is identified with the Waldhausen subcategory of $\A_1$ which consists of the weakly trivial objects. 

\begin{proposition} \label{fiber seq} 
Let $\C$ be a good Waldhausen category, $\A = (\A_i)_{i \geq 1}$ a collection of Waldhausen subcategories of $\C$, and let $n \in \{1, 2, \cdots, \infty\}$. 
Suppose that $\sgl_n \C_{\A}$ is a good Waldhausen category (see, e.g.,  Lemma \ref{additional-props}). Then the exact functors 
$\s^w_n \C_{\A} \hookrightarrow \s_n \C_{\A} \to \sgl_n \C_{\A}$
induce a homotopy fiber sequence 
$$K(\s^w_n \C_{\A}) \to K(\s_n \C_{\A}) \to K(\sgl_n \C_{\A}).$$
\end{proposition}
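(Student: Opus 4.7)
The plan is to realize this as a direct application of the Fibration Theorem (Theorem~\ref{fibration-thm}). The Waldhausen categories $\s_n \C_{\A}$ and $\sgl_n \C_{\A}$ share the same underlying category with cofibrations; only their classes of weak equivalences differ. Write $v$ for the (pointwise) weak equivalences that define $\s_n \C_{\A}$ and $w$ for the eventual weak equivalences that define $\sgl_n \C_{\A}$. By Definition~\ref{global-weq-def} every pointwise weak equivalence is an eventual weak equivalence, so $v \subseteq w$; and by hypothesis $\sgl_n \C_{\A}$ is good, which is precisely the main requirement imposed by the Fibration Theorem on the $w$-structure.

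Applying Theorem~\ref{fibration-thm} therefore yields a homotopy fiber sequence
$$K\bigl((\s_n \C_{\A})^w\bigr) \to K(\s_n \C_{\A}) \to K(\sgl_n \C_{\A}),$$
where $(\s_n \C_{\A})^w$ is the full Waldhausen subcategory spanned by those objects $X_{\bullet, \bullet}$ for which $\ast \to X_{\bullet, \bullet}$ lies in $w$. Unwinding Definition~\ref{global-weq-def}, this condition says that $\ast \to X_{0,n}$ is a weak equivalence in $\C$ when $n < \infty$, and that $\ast \to X_{0,n}$ is a weak equivalence for all sufficiently large $n$ when $n = \infty$. In either case this matches the defining condition of $\s^w_n \C_{\A}$, so $(\s_n \C_{\A})^w = \s^w_n \C_{\A}$ as Waldhausen subcategories, and substituting produces the claimed fiber sequence.

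All the genuinely non-formal work is packaged inside Theorem~\ref{fibration-thm}; the only verifications needed here are the inclusion $v \subseteq w$ and the goodness of $\sgl_n \C_{\A}$, the latter being assumed (and guaranteed under mild hypotheses by Lemma~\ref{additional-props}). The case $n = \infty$ is covered on the same footing since Theorem~\ref{fibration-thm} is insensitive to the size of the underlying category; as a consistency check one may alternatively take homotopy colimits of the fiber sequences at finite stages, using that $K$-theory commutes with sequential (homotopy) colimits and that the degeneracy inclusions $i_{n,\A}$ restrict to $\s^w_n \C_{\A} \hookrightarrow \s^w_{n+1} \C_{\A}$. The main (and essentially only) obstacle is therefore the goodness of $\sgl_n \C_{\A}$, which is built directly into the hypothesis of the proposition.
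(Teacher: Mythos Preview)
Your proof is correct and follows exactly the same approach as the paper, which simply states that this is a direct application of the Fibration Theorem (Theorem~\ref{fibration-thm}). You have merely spelled out the verifications ($v \subseteq w$, goodness of $\sgl_n \C_{\A}$, and the identification $(\s_n \C_{\A})^w = \s^w_n \C_{\A}$) that the paper leaves implicit.
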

\begin{proof}
This is a direct application of the Fibration Theorem (Theorem \ref{fibration-thm}). 
\end{proof}

\section{Admissible Waldhausen Pairs} \label{admissible-Wald-pairs}

\subsection{Basic definitions and properties} \label{basics-admissibility}
Let $(\C, \A)$ be a good Waldhausen pair. For the purpose of the devisage theorem in the next section, we will need to consider an additional \emph{ad hoc} condition on $(\C, \A)$ which states an identification of the $K$-theory of the Waldhausen category $\s^w_{\infty} \C_{\A}$. For each $n \geq 1$, we have an exact functor 
\begin{equation*}
q^w_n \colon \s^w_n \C_{\A} \longrightarrow \prod_{1}^{n-1} \A, \ \ (X_{\bullet, \bullet}) \mapsto (X_{0,1}, \cdots, X_{n-2, n-1}). 
\end{equation*}
Each functor $q^w_n$ admits a section given by the exact functor
$$j_n \colon \prod_{1}^{n-1} \A \to \s^w_n \C_{\A}, \  \ (A_1, \cdots, A_{n-1}) \mapsto (\ast \rightarrowtail A_1 \rightarrowtail \cdots \rightarrowtail \bigvee_1^{n-1} A_{i} \rightarrowtail \bigvee_1^{n-1} C(A_{i})).$$
As a consequence, the induced map $K(j_n)$ defines a section of $K(q^w_n)$. It will be convenient to consider also an exact functor $\tau_n \colon \prod_1^{n-1} \A \to \s^w_n \C_{\A}$ which induces a section only up to a homotopy equivalence. This functor is defined on objects by 
$$\tau_n \colon (A_1, \cdots, A_{n-1}) \mapsto (\ast \rightarrowtail A_1 \rightarrowtail A_2 \vee CA_1 \rightarrowtail \cdots \rightarrowtail A_{n-1} \vee \bigvee_1^{n-2} CA_{i} \rightarrowtail \bigvee_1^{n-1} CA_i),$$
and similarly on morphisms. The composite functor $q^w_n \circ \tau_n$ is given on objects by 
$$(A_1, \cdots, A_{n-1}) \mapsto (A_1, A_2 \vee \Sigma A_1, \cdots, A_{n-1} \vee \Sigma A_{n-2})$$
which induces a homotopy equivalence in $K$-theory. More specifically, the map in $K$-theory is identified with the homotopy equivalence
$$ (\pi_1, \pi_2 - \pi_1, \cdots, \pi_{n-1} - \pi_{n-2}): \prod_1^{n-1} K(\A) \to \prod_1^{n-1} K(\A)$$
where $\pi_i$ denotes the projection onto the $i$-th factor and the sum/difference is induced by the loop sum. Clearly each of the maps $K(q^w_n)$, $K(j_n)$, and $K(\tau_n)$, is a homotopy equivalence if any one of them is a homotopy equivalence. 

\medskip

The functors $\{\tau_n\}_{n \geq 1}$ are compatible with respect to $n \geq 1$, in the sense that the following diagrams of exact functors commute
$$
\xymatrix{
\s^w_n \C_{\A} \ar[r]^{i^w_{n, \A}} & \s^w_{n+1} \C_{\A}. \\
\prod_1^{n-1} \A \ar[u]_{\tau_n} \ar[r] & \prod_1^n \A \ar[u]_{\tau_{n+1}}
}
$$ 
Here the bottom functor is the canonical inclusion functor, given on objects by $(A_1, \cdots, A_{n-1}) \mapsto (A_1, \cdots, A_{n-1}, \ast)$. As a consequence, we obtain an exact 
functor 
$$\tau_{\infty} \colon \underrightarrow{\colim}_n \prod_1^{n-1} \A \longrightarrow \s^w_{\infty} \C_{\A}.$$

\begin{definition} \label{admissible-def}
Let $(\C, \A)$ be a good Waldhausen pair. We say that $(\C, \A)$ is \emph{admissible} if the exact functor $\tau_{\infty}$ induces a homotopy equivalence:
$$K(\tau_{\infty}) \colon \underrightarrow{\colim}_n \prod_1^{n-1} K(\A) \stackrel{\simeq}{\longrightarrow} K(\s^w_{\infty} \C_{\A}).$$
\end{definition} 

\begin{remark}
The comments above imply that $K(\tau_n)$ is (split) injective on homotopy groups. As a consequence, $K(\tau_{\infty})$ is also always injective on homotopy groups. 
\end{remark} 

\begin{remark}
The functors $\{j_n\}_{n \geq 1}$ are not compatible with respect to the natural inclusion functors $i^w_{n ,\A}$.  We consider the exact functor
$$\xi_n \colon \prod_1^{n-1} \A \to \prod_1^{n} \A, \ \ (A_1, \cdots, A_{n-1}) \mapsto (A_1, \cdots, A_{n-1}, \bigvee_1^{n-1}  \Sigma (A_i)).$$ 
The map in $K$-theory induced by $\xi_n$ can be identified with the map 
$$(\pi_1, \cdots, \pi_{n-1}, - \sum_1^{n-1} \pi_i ): \prod_1^{n-1} K(\A) \to \prod_1^{n} K(\A)$$
where $\pi_i$ denotes the projection onto the $i$-th factor. Using the Additivity Theorem (Theorem \ref{addit-thm-2}), it can be shown that the induced diagram in $K$-theory
\begin{equation} \label{j_n-maps}
\xymatrix{
K(\s^w_n \C_{\A} )\ar[rr]^{K(i^w_{n, \A})} && K(\s^w_{n+1} \C_{\A}) \\
\prod_1^{n-1} K(\A) \ar[rr]_{K(\xi_n)} \ar[u]^{K(j_n)} && \prod_1^n K(\A) \ar[u]_{K(j_{n+1})} 
}
\end{equation}
commutes up to a preferred homotopy. Thus, we may obtain also in this way a map as $n \to \infty$: 
$$J_{\infty} \colon \underrightarrow{\colim}_{(\xi_n)} \prod_1^{n-1} K(\A) \longrightarrow K(\s^w_{\infty}\C_{\A}).$$ 
We do not know if the corresponding diagram to \eqref{j_n-maps} where the vertical maps are replaced by the retraction maps $K(q^w_n)$ is also homotopy commutative -- this is indeed related to the admissibility assumption. 
\end{remark}

\medskip 

We do not know an example of a good Waldhausen pair $(\C, \A)$ which is not admissible. It is likely that the examples studied by Antieau--Barthel--Gepner \cite{ABG} in connection with Rognes' question are such examples (see \cite[Section 6]{ABG}). In the following sections, we will show several classes of examples where the admissibility assumption is satisfied, using arguments specific to each case. It would be interesting to identify further sufficient homotopical conditions on $(\C, \A)$ which are easy to verify and ensure that $(\C, \A)$ is admissible. 

\subsection{Criteria for admissiblity} \label{criteria-subsec} 
Let $(\C, \A)$ be a good Waldhausen pair. We let $\s^w_n \C_{\A}^1$ denote the Waldhausen category whose category with cofibrations is the same as that of $\s^w_n \C_{\A}$ and a morphism $F_{\bullet, \bullet} \colon X_{\bullet, \bullet} \to Y_{\bullet, \bullet}$ is a weak equivalence if $F_{0,1}$ is a weak equivalence in $\A$. This is again a good Waldhausen category. 

For $n \geq 2$, there are exact functors as follows:
$$p_{n}^1 \colon \s^w_n \C_{\A}^1 \to \A, \ \ (X_{\bullet, \bullet}) \mapsto X_{0,1},$$
$$s \colon \A \to \s^w_n \C_{\A}^1, \ \ A \mapsto (\ast \rightarrowtail A = A = \cdots = A \rightarrowtail C(A)).$$
Note that the composite functor $p_{n}^1 \circ s$ is the identity functor.

Let $\mathrm{iso} \ \s^w_n \C_{\A}^1$ (resp. $\mathrm{iso} \ \A$) denote the associated Waldhausen category whose underlying category with cofibrations is that of $\s^w_n \C_{\A}^1$ (resp. $\A$), and a morphism $F_{\bullet, \bullet} \colon X_{\bullet, \bullet} \to Y_{\bullet, \bullet}$ is a weak equivalence if $F_{0,1}$ is an isomorphism (resp. the weak equivalences in $\mathrm{iso} \ \A$ are 
the isomorphisms).  The functors defined above are exact also with respect to these Waldhausen category structures. 

\begin{proposition} \label{criterion}
Let $(\C, \A)$ be a good Waldhausen pair. Then $(\C, \A)$ is admissible if any of the following conditions holds.
\begin{itemize}
\item[(1)] The map $K(p_{n}^1) \colon K(\s^w_n \C_{\A}^1) \to K(\A), \ \ (X_{\bullet, \bullet}) \mapsto X_{0,1}$, is a homotopy equivalence for every $n \geq 2$.  
\item[(2)]  The map $$K(p_{n}^1) \colon K(\mathrm{iso} \ \s^w_n \C_{\A}^1) \to K(\mathrm{\iso}\ \A), \ \ (X_{\bullet, \bullet}) \mapsto X_{0,1},$$
is a homotopy equivalence for every $n \geq 2$. 
\end{itemize}
\end{proposition}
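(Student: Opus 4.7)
The plan is to establish, for each $n \geq 2$ under hypothesis (1), a split homotopy fiber sequence $K(\s^w_{n-1} \C_{\A}) \to K(\s^w_n \C_{\A}) \to K(\A)$, and then to pass to the filtered colimit over $n$, comparing with the analogous recursive structure on $\underrightarrow{\colim}_n \prod_1^{n-1} K(\A)$ via $K(\tau_\infty)$. Under hypothesis (2), the same recipe is carried out in the iso-Waldhausen category structures, with a final transfer to the standard weak-equivalence version via the Fibration Theorem for the inclusion $\mathrm{iso} \subset w$.

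First I would apply the Fibration Theorem (Theorem \ref{fibration-thm}) to the pair of Waldhausen categories $\s^w_n \C_{\A}$ (pointwise weak equivalences) and $\s^w_n \C_{\A}^1$ (weak equivalences tested only at position $(0,1)$), which share the same underlying category with cofibrations. This yields
$$K((\s^w_n \C_{\A})^{w_1}) \to K(\s^w_n \C_{\A}) \to K(\s^w_n \C_{\A}^1),$$
where $(\s^w_n \C_{\A})^{w_1}$ is the full Waldhausen subcategory of objects with $X_{0,1}$ weakly trivial. Hypothesis (1) identifies the third term with $K(\A)$ via $K(p_n^1)$, and the section $s$ (viewed as landing in $\s^w_n \C_{\A}$) splits the sequence in $K$-theory. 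To identify the fiber with $K(\s^w_{n-1} \C_{\A})$, I would introduce mutually inverse exact functors: $\sigma \colon (\s^w_n \C_{\A})^{w_1} \to \s^w_{n-1} \C_{\A}$, sending $X_{\bullet,\bullet}$ to its sub-staircase $X^{\geq 1}_{\bullet,\bullet}$ (reindexed as a staircase of length $n-1$), and $c_n \colon \s^w_{n-1} \C_{\A} \to (\s^w_n \C_{\A})^{w_1}$, prepending a zero at position~$1$. One has $\sigma \circ c_n = \mathrm{id}$, and the canonical quotients $X_{0,j} \twoheadrightarrow X_{1,j}$ assemble into a natural transformation $\mathrm{id} \Rightarrow c_n \circ \sigma$ that is pointwise a weak equivalence on $(\s^w_n \C_{\A})^{w_1}$ (by 2-out-of-3 combined with the gluing axiom, using that $X_{0,1}$ is weakly trivial). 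Hence $K(\sigma)$ and $K(c_n)$ are mutually inverse homotopy equivalences.

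The $c_n$'s and the $p_n^1$'s commute with the stabilization functors $i^w_{n,\A}$, so the fiber sequences assemble under the filtered colimit into
$$K(\s^w_\infty \C_{\A}) \xrightarrow{K(c_\infty)} K(\s^w_\infty \C_{\A}) \xrightarrow{K(p_\infty^1)} K(\A),$$
matching the analogous recursive sequence $\underrightarrow{\colim}_n \prod_1^{n-1} K(\A) \to \underrightarrow{\colim}_n \prod_1^{n-1} K(\A) \to K(\A)$ (shift operator and first-factor projection) via $K(\tau_\infty)$. A direct verification using the explicit formulas shows $c_n \circ \tau_{n-1} = \tau_n \circ (\text{prepend zero})$ and $p_n^1 \circ \tau_n = \pi_1$, producing a map of fiber sequences that induces the identity on the third terms; the iterated splittings then force $K(\tau_\infty)$ to be a homotopy equivalence.

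The main obstacle lies in the careful bookkeeping of this colimit comparison: at each finite stage $n$, iterating the split fiber sequence gives $K(\s^w_n \C_{\A}) \simeq K(\A)^{n-1} \oplus K(\s^w_1 \C_{\A})$, and one must verify that the non-trivial diagonal-like action of the stabilization maps $K(i^w_{n,\A})$ on the residual $K(\s^w_1 \C_{\A})$-summand recombines correctly so that the colimit matches $\underrightarrow{\colim}_n \prod_1^{n-1} K(\A)$. For hypothesis (2), an additional subtlety is to verify that the transfer from the iso-version of admissibility to the standard one, via comparison of fiber sequences arising from $\mathrm{iso} \subset w$, goes through cleanly.
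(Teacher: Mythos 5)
Your argument is essentially the paper's: the same application of the Fibration Theorem to the two Waldhausen structures on $\s^w_n \C_{\A}$, the same identification of the fiber (objects with $X_{0,1}$ weakly trivial) with $\s^w_{n-1}\C_{\A}$ via the prepend-zero and drop-first-stage functors together with the quotient natural weak equivalence, and for (2) the same transfer from isomorphisms to weak equivalences via a second application of the Fibration Theorem. The colimit bookkeeping you flag as the main obstacle is sidestepped in the paper by running the comparison of fiber sequences as a finite induction on $n$ (five lemma at each stage, using the sections $j_n$ and the fact that $K(\s^w_1\C_{\A}) = K(\A^w) \simeq \ast$), which shows each $K(j_n)$ --- hence each $K(\tau_n)$ --- is an equivalence before passing to the colimit.
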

\begin{proof}
(1) We proceed by induction on $n \geq 2$ and show that $K(q^w_k)$ is a homotopy equivalence for $k \leq n$ if and only if $K(p_{k}^1)$ is a homotopy equivalence for $k \leq n$. The claim is obvious for $n = 2$. For the inductive step, consider the full Waldhausen subcategory $\E_n \subset \s^w_n \C_{\A}$
which consists of objects $(X_{\bullet, \bullet})$ such that $X_{0,1}$ is weakly trivial. Then the Fibration Theorem (Theorem \ref{fibration-thm}) implies that there is a homotopy fiber sequence 
$$K(\E_n) \to  K(\s^w_n \C_{\A}) \to K(\s^w_n \C_{\A}^1).$$
There are exact functors 
$$\rho \colon \E_n \to \s^w_{n-1} \C_{\A}, \ \ (X_{\bullet, \bullet}) \mapsto (\ast \rightarrowtail X_{1,2} \rightarrowtail \cdots \rightarrowtail X_{1,n})$$
$$\iota \colon \s^w_{n-1} \C_{\A} \to \E_n, \ \ (Y_{\bullet, \bullet}) \mapsto (\ast = \ast \rightarrowtail Y_{0,1} \rightarrowtail \cdots \rightarrowtail Y_{0, n-1}).$$
The composition $\rho \circ \iota$ is identified with the identity functor. The composition $\iota \circ \rho$ is weakly equivalent to the identity functor via the following natural weak equivalence of filtered objects:
\[
 \xymatrix{
 X_{0,1} \ar@{>->}[r] \ar[d]^{\sim} & X_{0,2} \ar@{>->}[r] \ar[d]^{\sim} & \cdots \ar@{>->}[r] & X_{0,n} \ar[d]^{\sim} \\
 \ast \ar@{>->}[r] & X_{1,2} \ar@{>->}[r] & \cdots \ar@{>->}[r] & X_{1,n}.
 }
\]
Therefore, $K(\E_n) \simeq K(\s^w_{n-1} \C_{\A}) \simeq \prod_{1}^{n-2} K(\A)$, using the inductive assumption. Now we consider the following homotopy commutative diagram: 
\[
 \xymatrix{
K(\E_n) \ar[r] &  K(\s^w_n \C_{\A}) \ar[r] & K(\s^w_n\C_{\A}^1) \\
\prod_{1}^{n-2} K(\A) \ar[r] \ar[u]^{\simeq}_{K(\iota j_{n-1})} & \prod_1^{n-1} K(\A) \ar[r]^(.6){\pi_1} \ar[u]_{K(j_n)} & K(\A) \ar[u]_{K(s)}
}
\]
where the rows are homotopy fiber sequences (of infinite loop spaces). The right square is homotopy commutative because the underlying exact functors are naturally weakly equivalent. Hence the right vertical map is a homotopy equivalence if and only if the middle vertical map is a homotopy equivalence. Equivalently, the map $K(p_{n}^1)$ is a homotopy equivalence if and only if $K(j_n)$ -- and therefore also $K(q^w_n)$ and $K(\tau_n)$ -- is a homotopy equivalence. The result then follows by induction. 

\noindent (2) Let $\D_n \subset \mathrm{iso} \ \s^w_n \C_{\A}^1$ be the full Waldhausen subcategory spanned by the objects $X_{\bullet, \bullet}$ such that $X_{0,1}$ is weakly trivial. Consider the following diagram in $K$-theory:
$$
\xymatrix{
K(\D_n) \ar[r] \ar[d] & K(\mathrm{iso} \ \s^w_n \C_{\A}^1) \ar[r] \ar[d] & K(\s^w_n \C_{\A}^1) \ar[d] \\
K(\mathrm{iso} \ \A^w) \ar[r] & K(\mathrm{iso}\ \A) \ar[r] & K(\A)
}
$$
where the rows define homotopy fiber sequences (of infinite loop spaces) by the Fibration Theorem (Theorem \ref{fibration-thm}).  The vertical maps are induced by the functor $p_{n}^1 \colon X_{\bullet, \bullet} \mapsto X_{0,1}$. The map $K(\D_n) \to K(\mathrm{iso} \ \A^w)$ is a homotopy equivalence with homotopy inverse 
induced by the functor $A \mapsto (\ast \rightarrowtail A = \cdots = A)$. Hence the right vertical map is a homotopy equivalence if (and only if) the middle 
vertical map is a homotopy equivalence. The result then follows from (1). 
\end{proof}

\begin{remark}
As a consequence of the Approximation Theorem (Theorem \ref{approx-thm} and Remark \ref{remark-approx}), the criterion of Proposition \ref{criterion}(1) is satisfied if $\A$ is strongly saturated and $s \colon \A \to \s^w_n \C_{\A}^1$ induces an equivalence between the homotopy categories for every $n \geq 2$. This condition expresses a property of the Waldhausen pair $(\C, \A)$ with regard to cofibrations to a weakly trivial object in $\C$ whose cofiber is in $\A$. We emphasize that this property is not immediate, especially, because the inclusion $\A \subset \C$ is not assumed to be homotopically fully faithful in general.  
\end{remark}

\subsection{Example: Replete Waldhausen pairs} \label{replete-subsec}
The next proposition shows that replete Waldhausen pairs (Definition \ref{replete-def}) are examples of admissible Waldhausen pairs.

\begin{proposition} \label{replete-example} 
Let $(\C, \A)$ be a good Waldhausen pair which is also replete. Then $(\C, \A)$ is admissible. 
\end{proposition}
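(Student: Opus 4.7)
The plan is to invoke Proposition \ref{criterion}(1), which reduces the claim to showing that $K(p_n^1)\colon K(\s^w_n \C_{\A}^1) \to K(\A)$ is a homotopy equivalence for every $n \geq 2$. I will deduce this from the Approximation Theorem (Theorem \ref{approx-thm}) applied to $p_n^1$; both Waldhausen categories are good, with $\s^w_n \C_{\A}^1$ inheriting factorizations from the cylinder functor on $\C$ (cf.\ Lemma \ref{additional-props}(iii)), and its 2-out-of-3 property being transferred from $\A$ through $p_n^1$. Property (App1) is tautological from the very definition of the weak equivalences in $\s^w_n \C_{\A}^1$.

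For (App2), given $X_{\bullet,\bullet} \in \s^w_n \C_{\A}^1$ and a morphism $f \colon X_{0,1} \to A$ in $\A$, first factor $f$ via the cylinder functor of $\A$ as $X_{0,1} \rightarrowtail T \stackrel{\sim}{\to} A$, so that $T \in \A$. Define $X'_{\bullet,\bullet}$ by setting
\begin{equation*}
X'_{0,i} := T \cup_{X_{0,1}} X_{0,i} \quad \text{for } 1 \leq i \leq n-1, \qquad X'_{0,n} := C(T) \cup_{X_{0,1}} C(X_{0,n}),
\end{equation*}
with the canonical map $X_{\bullet,\bullet} \to X'_{\bullet,\bullet}$ induced by $X_{0,1} \rightarrowtail T$, the pushout inclusions at intermediate levels, and $X_{0,n} \rightarrowtail C(X_{0,n}) \to X'_{0,n}$ at the top. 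Together with $q = (T \stackrel{\sim}{\to} A)$ and $j = \mathrm{id}_A$, this furnishes the commutative square required by (App2).

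The crux of the argument, and the main obstacle, is verifying that $X'_{\bullet,\bullet}$ genuinely lies in $\s^w_n \C_{\A}^1$; this is where repleteness plays an essential role. Weak triviality of $X'_{0,n}$ follows from the gluing axiom, since both $C(T)$ and $C(X_{0,n})$ are weakly trivial. A direct pushout computation yields $X'_{i-1,i} \cong X_{i-1,i} \in \A$ for $1 \leq i \leq n-1$. For $i = n$, I would factor $X'_{0,n-1} \rightarrowtail X'_{0,n}$ through the intermediate object $C(T) \cup_{X_{0,1}} X_{0,n}$: computing cofibers of pushout maps as pushouts of cofibers of the component maps, the first step has cofiber $\Sigma T \vee X_{n-1,n}$ (which lies in $\A$ because good Waldhausen pairs are closed under $\Sigma$ and under wedges), while the second step has cofiber $\Sigma X_{0,n}$, which is weakly trivial since $X_{0,n}$ is. Consequently $X'_{n-1,n}$ is weakly equivalent to $\Sigma T \vee X_{n-1,n} \in \A$, and repleteness places $X'_{n-1,n}$ itself in $\A$. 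The symmetric choice $C(T) \cup_{X_{0,1}} C(X_{0,n})$ is critical: naive alternatives such as $T \cup_{X_{0,1}} X_{0,n}$ or attaching a cone on only one side either destroy weak triviality or produce a top cofiber which is merely an extension of objects in $\A$ rather than their wedge, which repleteness alone cannot control.
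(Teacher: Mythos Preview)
Your approach is genuinely different from the paper's but correct. The paper argues directly that $K(q^w_n)$ is a homotopy equivalence: it introduces an intermediate Waldhausen category $\s_{n-1}(\C_{\A}, \Sigma^{-1}\A) \subset \s_{n-1}\C_{\A}$ consisting of those filtered objects whose top term has suspension in $\A$ (repleteness is used to make this condition independent of the choice of cone), shows via the Approximation Theorem that the forgetful functor $\mathscr{U}\colon \s^w_n \C_{\A} \to \s_{n-1}(\C_{\A}, \Sigma^{-1}\A)$ is a $K$-equivalence, and then decomposes the target inductively by Additivity. Your route through Proposition~\ref{criterion}(1) trades that Additivity step for the Fibration Theorem hidden inside the criterion; your Approximation argument for $p_n^1$ is in some ways more elementary, since no auxiliary intermediate category needs to be set up. On the other hand, the paper's approach has the mild advantage of not actually using the cylinder functor---only factorizations are needed (cf.\ the Remark following the proposition), whereas your construction of $X'_{0,n}$ uses cones explicitly.

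There is one genuine but easily repairable gap. Your deduction that $X'_{n-1,n}$ is weakly equivalent to $\Sigma T \vee X_{n-1,n}$ does not follow merely from the second step having weakly trivial cofiber $\Sigma X_{0,n}$: in a general Waldhausen category a cofibration with weakly trivial cofiber need not be a weak equivalence---that would be the extension axiom, which the paper deliberately avoids (see the proof of Theorem~\ref{fibration-thm}). The fix is to argue instead that the second step $C(T)\cup_{X_{0,1}} X_{0,n} \to C(T)\cup_{X_{0,1}} C(X_{0,n})$ is \emph{itself} a weak equivalence: it is induced on pushouts by $X_{0,n} \rightarrowtail C(X_{0,n})$, which is a weak equivalence by 2-out-of-3 since $X_{0,n}$ is weakly trivial, so the gluing axiom applies directly. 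A second application of the gluing axiom (taking cofibers over $X'_{0,n-1}$) then gives the desired weak equivalence $\Sigma T \vee X_{n-1,n} \xrightarrow{\sim} X'_{n-1,n}$, and repleteness finishes the job exactly as you say.
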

\begin{proof}
We claim that the exact functor $q^w_n \colon \s^w_n \C_{\A} \to \prod_1^{n-1} \A$ induces a homotopy equivalence in $K$-theory for any $n \geq 1$. The claim is obvious for $n=1$, so we 
may assume that $n > 1$. Let $\s_{n-1} (\C_{\A}, \Sigma^{-1} \A)$ denote the full Waldhausen subcategory of $\s_{n-1} \C_{\A}$ spanned by
objects $(X_{\bullet, \bullet})$ such that the suspension of $X_{0, n-1}$ is in $\A$. In detail, this means that there is a factorization 
$$X_{0, n-1} \rightarrowtail Z \stackrel{\sim}{\to} \ast$$
such that the cofiber of the first morphism is in $\A$. (Since $\A \subset \C$ is replete, this property is independent of the choice of the factorization.) The full 
Waldhausen subcategory $\s_{n-1}(\C_{\A}, \Sigma^{-1} \A)$ has the 2-out-of-3 property and admits factorizations because $\s_{n-1} \C_{\A}$ has these properties. There is an exact functor 
$$\mathscr{U}: \s^w_{n} \C_{\A} \to \s_{n-1}(\C_{\A}, \Sigma^{-1} \A)$$
which simply forgets the last column. 
We show that $\mathscr{U}$ induces a homotopy equivalence in $K$-theory by checking that it has properties (App1) and (App2) of the Approximation Theorem 
(Theorem \ref{approx-thm}). $\mathscr{U}$ obviously satisfies 
(App1). To verify (App2), let 
$$f: \mathscr{U}(X_{\bullet, \bullet}) \to Z_{\bullet, \bullet}$$ 
be a morphism in $\s_{n-1}(\C_{\A}, \Sigma^{-1} \A)$ and consider a factorization 
$$Z_{0,n-1} \cup_{X_{0, n-1}} X_{0,n} \rightarrowtail Y \stackrel{\sim}{\to} \ast.$$
Then let $Z'_{\bullet, \bullet}$ be the object of $\s^w_n \C_{\A}$ 
that corresponds to the filtered object 
$$\ast \rightarrowtail Z_{0,1} \rightarrowtail Z_{0,2} \rightarrowtail \cdots \rightarrowtail 
Z_{0, n-1} \rightarrowtail Y$$
by making choices of cofibers. There is an obvious morphism $f': X_{\bullet, \bullet} \to Z'_{\bullet, \bullet}$ and 
a weak equivalence $q: \mathscr{U}(Z'_{\bullet, \bullet}) \xrightarrow{=} Z_{\bullet, \bullet}$ such that $f = q \circ \mathscr{U}(f')$. 
Thus, (App2) holds, and therefore $\mathscr{U}$ induces a homotopy equivalence in $K$-theory:
$$K(\mathscr{U}) \colon K\big(\s^w_{n} \C_{\A}\big) \stackrel{\simeq}{\to} K\big(\s_{n-1}(\C_{\A}, \Sigma^{-1} \A)\big).$$
This proves our original claim for $n=2$, since $\s_1 (\C_{\A}, \Sigma^{-1} \A)$ is identified with $\A$. For general $n$, we may view $\A$ as the full Waldhausen subcategory of constant filtered objects 
in $\s_{n-1} (\C_A,  \Sigma^{-1} \A)$, and $\s_{n-2} (\C_A,  \Sigma^{-1} \A)$ as the full Waldhausen subcategory of those filtered objects with $X_{0,1} = \ast$. 
In addition, these inclusion functors admit retractions, given respectively by the exact functors:
\begin{equation*}
\begin{split}
r_1 \colon \s_{n-1}(\C_{\A}, \Sigma^{-1} \A) \to \A \subset \s_{n-1}(\C_{\A}, \Sigma^{-1} \A) \\
X_{\bullet, \bullet} \mapsto (\ast \rightarrowtail X_{0,1} \stackrel{=}{\rightarrowtail} \cdots \stackrel{=}{\rightarrowtail} X_{0,1})
\end{split} 
\end{equation*}
and
\begin{equation*}
\begin{split}
r_2 \colon \s_{n-1}(\C_{\A}, \Sigma^{-1} \A) \to \s_{n-2}(\C_{\A}, \Sigma^{-1} \A) \subset \s_{n-1}(\C_{\A}, \Sigma^{-1}\A) \\ 
X_{\bullet, \bullet} \mapsto (\ast \rightarrowtail \ast \rightarrowtail X_{1,2} \rightarrowtail \cdots \rightarrowtail X_{1,n-1}),
\end{split}
\end{equation*}
which assemble to define an exact functor
\begin{equation*} 
\begin{split}
\s_{n-1}(\C_{\A}, \Sigma^{-1} \A) \longrightarrow E\big(\A, \s_{n-1}(\C_{\A}, \Sigma^{-1}\A), \s_{n-2}(\C_{\A}, \Sigma^{-1}\A)\big) \\ 
X_{\bullet, \bullet} \mapsto \big(r_1(X_{\bullet, \bullet}) \rightarrowtail X_{\bullet, \bullet} \twoheadrightarrow r_2(X_{\bullet, \bullet})\big).
\end{split}
\end{equation*}
Then the Additivity Theorem (Theorem \ref{addit-thm-2}) shows that the pair of exact functors 
$$(r_1, r_2) \colon \s_{n-1}(\C_{\A}, \Sigma^{-1} \A) \rightleftarrows \A \times \s_{n-2}(\C_{\A}, \Sigma^{-1} \A) \colon \vee$$
induces inverse homotopy equivalences in $K$-theory. Using this homotopy equivalence, we also obtain inductively homotopy equivalences as follows:
$$K\big(\s_{n-1} (\C_A, \Sigma^{-1} \A)\big) \simeq K(\A) \times K\big(\s_{n-2}(\C_{\A}, \Sigma^{-1} \A)\big) \simeq \cdots \simeq \prod_1^{n-1} K(\A).$$
By direct inspection, we observe that the composite homotopy equivalence is given by the functor: 
\begin{equation} \label{identification-2}
(X_{\bullet, \bullet}) \mapsto (X_{0,1}, X_{1,2}, \cdots, X_{n-2, n-1}).
\end{equation}\end{proof}

\begin{remark}
The proof of Proposition \ref{replete-example} works also under some slightly weaker assumptions. It suffices that $\C$ is good, $\A \hookrightarrow \C$ detects cofibrations, and $(\C, \A)$ is replete. In other words, under these assumptions, it is not required to assume that $\C$ admits \emph{functorial} factorizations.
\end{remark}

\subsection{Example: Waldhausen pairs with (WHEP)} \label{whep}

This example is inspired by the assumptions in the d\'evissage theorem of D. Yao \cite{Ya}. Let $(\C, \A)$ be a good Waldhausen pair. We say that $(\C, \A)$ satisfies the \emph{weak homotopy extension property} (WHEP) if the following property is satisfied: given a cofibration $A \rightarrowtail B$ in $\A$ and $X \in \C$ such that $X \xrightarrow{\sim} \ast$, 
then every morphism $C(A) \cup_A B \to X$ extends to a morphism $C(B) \to X$ along the cofibration $C(A) \cup_A B \rightarrowtail C(B)$. 

\begin{proposition}
Let $(\C, \A)$ be a good Waldhausen pair where $\A \subset \C$ is a full subcategory. Suppose that $(\C, \A)$ satisfies the weak homotopy extension property.  Then $(\C, \A)$ is admissible. 
\end{proposition}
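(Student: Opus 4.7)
The plan is to invoke criterion (1) of Proposition \ref{criterion}: it suffices to show that the exact functor $p_n^1 \colon \s^w_n \C_{\A}^1 \to \A$ induces a homotopy equivalence in $K$-theory for every $n \geq 2$, from which admissibility follows at once. I would establish this via the Approximation Theorem (Theorem \ref{approx-thm}) applied to $p_n^1$. Both the source and target are good Waldhausen categories by Lemma \ref{additional-props}(iii) together with the good-pair assumption, and property (App1) is tautological, since the weak equivalences in $\s^w_n \C_{\A}^1$ are by definition precisely those detected on the $(0,1)$-slot.

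The whole content of the argument is in verifying (App2). Given $X_{\bullet,\bullet} \in \s^w_n \C_{\A}^1$ and a morphism $f \colon X_{0,1} \to A$ in $\A$, I would first use the cylinder functor on $\A$ to factor $f$ as a cofibration $X_{0,1} \rightarrowtail A^*$ followed by a weak equivalence $A^* \stackrel{\sim}{\to} A$. The goal is then to construct an object $X'_{\bullet,\bullet} \in \s^w_n \C_{\A}^1$ with $X'_{0,1} = A^*$ together with a morphism $f' \colon X_{\bullet,\bullet} \to X'_{\bullet,\bullet}$ whose $(0,1)$-component is this chosen cofibration; the required commuting square is then completed with $j = \mathrm{id}_A$ and $q$ equal to the weak equivalence $A^* \stackrel{\sim}{\to} A$.

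For the intermediate slots $1 \leq i \leq n-1$, I would set $X'_{0,i} := A^* \cup_{X_{0,1}} X_{0,i}$, so that the successive cofibers $X'_{i-1,i}$ coincide with the original $X_{i-1,i} \in \A$. The construction of the top slot $X'_{0,n}$ is the technical heart of the proof, and is precisely where WHEP is used. It must satisfy two simultaneous conditions: the cofibration $X'_{0,n-1} \rightarrowtail X'_{0,n}$ should have its cofiber in $\A$, and $X'_{0,n}$ should be weakly trivial. The strategy is to build $X'_{0,n}$ as a pushout incorporating $C(A^*)$, $X_{0,n}$ and $X'_{0,n-1}$, the gluing data being supplied by WHEP. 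Concretely, a first application of WHEP to the cofibration $\ast \rightarrowtail X_{0,1}$ in $\A$ and the weakly trivial $X_{0,n}$ extends the cofibration $X_{0,1} \rightarrowtail X_{0,n}$ to a null-homotopy $h \colon C(X_{0,1}) \to X_{0,n}$; a second application, to the cofibration $X_{0,1} \rightarrowtail A^*$ and the same $X_{0,n}$, then extends a suitable map $C(X_{0,1}) \cup_{X_{0,1}} A^* \to X_{0,n}$ built from $h$ to a map $\Phi \colon C(A^*) \to X_{0,n}$. Weak triviality of the resulting pushout follows from the gluing axiom combined with $C(A^*) \stackrel{\sim}{\to} \ast$ and $X_{0,n} \stackrel{\sim}{\to} \ast$, while the cofiber condition is arranged by the very shape of the pushout.

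The main obstacle is organising this last pushout so that the two conditions on $X'_{0,n}$ hold at the same time; the WHEP is the structural hypothesis that ensures these are compatible, and without it one has no reason to expect such a simultaneous extension to exist.
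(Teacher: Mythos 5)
Your first step matches the paper: both reduce to criterion (1) of Proposition \ref{criterion}. But from there you diverge, and your route has a genuine gap. The paper does \emph{not} verify (App2) for $p_n^1$ directly; it applies the Approximation Theorem to the section $s \colon \A \to \s^w_n\C_{\A}^1$, $A \mapsto (\ast \rightarrowtail A = \cdots = A \rightarrowtail C(A))$, and then uses $p_n^1 \circ s = \mathrm{id}$. The reason this direction is the right one is that for $s$, a morphism $F \colon s(A) \to X_{\bullet,\bullet}$ already hands you compatible maps $A \to X_{0,1}$ and $C(A) \to X_{0,n}$; after factoring $A \rightarrowtail Z \xrightarrow{\sim} X_{0,1}$ these assemble into a well-defined map $C(A) \cup_A Z \to X_{0,n}$, which is exactly the input WHEP needs to produce $C(Z) \to X_{0,n}$ and hence a weak equivalence $s(Z) \xrightarrow{\sim} X_{\bullet,\bullet}$ factoring $F$.

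In your direction the analogous input does not exist. Your second application of WHEP requires a morphism $C(X_{0,1}) \cup_{X_{0,1}} A^* \to X_{0,n}$, i.e.\ (besides $h$) a map $A^* \to X_{0,n}$ extending the structural cofibration $X_{0,1} \rightarrowtail X_{0,n}$ along $X_{0,1} \rightarrowtail A^*$. Nothing provides such an extension: $A^*$ arises from factoring an arbitrary $f \colon X_{0,1} \to A$ and carries no map to $X_{0,n}$, and WHEP only extends maps over cone inclusions $C(A) \cup_A B \rightarrowtail C(B)$, not over arbitrary cofibrations $X_{0,1} \rightarrowtail A^*$. The phrase ``a suitable map built from $h$'' conceals precisely the missing datum. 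Moreover, even granting some $\Phi \colon C(A^*) \to X_{0,n}$, the natural pushout candidates for the top slot fail one of your two conditions: for instance $C(A^*) \cup_{X_{0,1}} X_{0,n}$ has cofiber $\Sigma A^* \vee X_{n-1,n}$ over $X'_{0,n-1}$ but is weakly equivalent to $\Sigma X_{0,1}$ (a homotopy pushout of two weakly trivial objects along $X_{0,1}$), hence is not weakly trivial; while $A^* \cup_{X_{0,1}} X_{0,n}$ has the right cofiber $X_{n-1,n}$ but is weakly equivalent to $A^*/X_{0,1}$, again not trivial. The repair is simply to run the Approximation Theorem on $s$ instead of $p_n^1$, as the paper does.
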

\begin{proof}
We show that the condition of Proposition \ref{criterion}(1) is satisfied. To this end, we claim that the exact functor $s \colon \A \to \s^w_n\C_{\A}^1$ 
induces a homotopy equivalence in $K$-theory. This is shown by applying the Approximation Theorem (Theorem \ref{approx-thm}). The functor $s$ 
clearly satisfies (App1). For (App2), we consider a morphism $F_{\bullet, \bullet}  \colon s(A) \to X_{\bullet, \bullet}$ and a factorization in $\A$,
$$A \stackrel{i}{\rightarrowtail} Z \xrightarrow{q, \sim} X_{0,1}.$$
By the weak homotopy extension property, we find an extension of the morphism $C(A) \cup_A Z \to X_{0,n}$ to a morphism $q' \colon C(Z) \to X_{0,n}$. This extension is used to define a morphism $Q_{\bullet, \bullet} \colon s(Z) \to X_{\bullet, \bullet}$ by 
$$
\xymatrix{
Z \ar@{=}[r] \ar[d]_q & Z \ar@{=}[r] \ar[d] & \cdots \ar@{=}[r] & Z \ar[d] \ar@{>->}[r] & C(Z) \ar[d]_{q'} \\
X_{0,1} \ar@{>->}[r] & X_{0,2} \ar@{>->}[r] & \cdots \ar@{>->}[r] & X_{0,n-1} \ar@{>->}[r] & X_{0,n}  
}
$$
and this gives the required factorization of $F_{\bullet, \bullet}$ as the composition 
$$s(A) \stackrel{s(i)}{\rightarrowtail} s(Z) \xrightarrow{Q_{\bullet, \bullet}, \sim} X_{\bullet, \bullet}.$$
Then the claim follows from the Approximation Theorem (Theorem \ref{approx-thm}).
\end{proof}

\begin{example} \label{fibrant-objects-example}
Let $(\C, \A)$ be a good Waldhausen pair where $\A \subset \C$ is a full subcategory. Suppose that there is an exact fully faithful functor $\iota \colon \C \hookrightarrow \mathcal{M}_{c}$ in the full subcategory of cofibrant objects of a pointed model category $\mathcal{M}$. In addition, suppose that for every $X \in \C$ which is weakly trivial, the object $\iota(X) \in \mathcal{M}_c$ is fibrant. Then $(\C, \A)$ has the weak homotopy extension property, and therefore, it is admissible.
\end{example}

\section{Single Type D\'{e}vissage} \label{single-type-dev-sec}

\noindent \textbf{Assumptions.} We fix the following notation and assumptions throughout this section. Let $(\C, \A)$ denote a good Waldhausen pair. We denote also by $\A$ the collection of Waldhausen subcategories $(\A = \A_i)_{i \geq 1}$ which is constant at $\A$. By Lemma \ref{additional-props}, it follows that the Waldhausen categories $\s_n \C_{\A}$ and $\sgl_{n} \C_{\A}$ are good for every $n \in \{1, 2, \dots, \infty\}$.  

\subsection{The d\'{e}vissage condition} \label{devissage-cond-subsec}
There are exact ``evaluation" functors
$$ev_n: \sgl_n \C_{\A} \to \C, \ \ (X_{\bullet, \bullet}) \mapsto X_{0,n}.$$
These are compatible with stabilization along the inclusion functors $\widehat{i}_{n, \A}$, so there is also an induced exact functor, 
$$ev_{\infty}: \sgl_{\infty} \C_{\A} \to \C,$$
which sends a staircase diagram representing a bounded $\A$-filtration of an object $X_{0,n}$, for $n$ large enough, to the object $X_{0,n}$ itself. One of the main 
functions of a d\'{e}vissage condition on $(\C, \A)$ is to ensure that the exact functor $ev_{\infty}$ induces a $K$-equivalence. 

\begin{definition} \label{devissage-cond-def}
We say that $(\C, \A)$ satisfies the \emph{d\'{e}vissage condition} if for every morphism $f \colon X \rightarrow Y$ in $\C$, there is a weak equivalence $g \colon Y \stackrel{\sim}{\rightarrow} Y'$ such that the composition $gf \colon X \to Y'$ 
admits a factorization
  $$X = X_0 \rightarrowtail X_1 \rightarrowtail \cdots \rightarrowtail X_{m-1} \rightarrowtail X_m \stackrel{\sim}{\to} Y'$$
where (some choice of cofiber) $X_i / X_{i-1}$ is in $\A$ for all $i \geq 1$.
\end{definition}

This says that up to weak equivalence every morphism in $\C$, and not just the objects, admit filtrations whose subquotients are  in $\A$. We emphasize that $\A \subset \C$ is not assumed to be replete (or homotopically fully faithful) in general and the d\'evissage condition requires that the subquotients must be strictly in $\A$, i.e., up to isomorphism and not only up to weak equivalence. Even though these filtrations are not assumed to be functorial or invariant in any sense, we will explain in Remark \ref{uniqueness-filtrations} that the d\'evissage condition suffices for the functor $ev_{\infty} \colon \sgl_{\infty} \C_{\A} \to \C$ to be an equivalence of homotopy theories. 

\begin{example} \label{bigger-A}
Suppose that $(\C, \A)$ satisfies the d\'evissage condition. Let $(\C, \A')$ be another 
good Waldhausen pair where $\mathrm{Ob}\A \subseteq \mathrm{Ob} \A'$. Then $(\C, \A')$ also satisfies the d\'evissage condition. 
\end{example}

\begin{example} \label{bigger-C}
Suppose that $(\C, \A)$ satisfies the d\'evissage condition. Let $\C'$ be another Waldhausen category whose underlying category with 
cofibrations is $(\C, co\C)$ and $w\C \subset w\C'$. Let $(\C', \A')$ denote the good Waldhausen pair which corresponds to the subcategory of $\C'$
defined by $\A$. Then $(\C', \A')$ also satisfies the d\'evissage condition. 
\end{example}

\begin{example} \label{A-filtered-objs}
Let $(\C, \A)$ be a good Waldhausen pair and let $n \geq 1$ be an integer or $n=\infty$. Then $(\s_n \C_\A, \s_n \A)$ is again a good Waldhausen pair and satisfies the d\'evissage condition. The required filtrations are constructed using the fact that the objects in $\s_n \C_\A$ have subquotients in $\A$ (cf.  the inductive procedure used in the proof of Proposition \ref{ho-eq-1}).
\end{example}
 
\begin{proposition}\label{devissage-cond-prop} 
Suppose that $(\C, \A)$ satisfies the d\'{e}vissage condition. Then $ev_{\infty} \colon \sgl_{\infty} \C_{\A} \to \C$ 
induces a homotopy equivalence 
$$K(\sgl_{\infty} \C_{\A}) \stackrel{\simeq}{\rightarrow} K(\C).$$
\end{proposition}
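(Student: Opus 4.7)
My plan is to apply the Approximation Theorem (Theorem \ref{approx-thm}) to the exact functor $ev_\infty \colon \sgl_\infty \C_\A \to \C$. Both Waldhausen categories involved are good, by the standing assumption of Section \ref{single-type-dev-sec}, so the theorem is applicable.

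Property (App1) should be immediate from Definition \ref{global-weq-def}: the weak equivalences in $\sgl_\infty \C_\A$ are, by construction, exactly those morphisms $F_{\bullet,\bullet}$ for which $F_{0,N} = ev_\infty(F_{\bullet,\bullet})$ is a weak equivalence in $\C$ for $N$ large enough, so the two classes of weak equivalences correspond precisely under $ev_\infty$.

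The heart of the proof is verifying (App2), and this is exactly the sort of statement the dévissage condition is designed to supply. Given $X_{\bullet,\bullet} \in \sgl_\infty \C_\A$, represented by a staircase in some $\s_n \C_\A$, and a morphism $f \colon X_{0,n} \to Y$ in $\C$, I will apply the dévissage condition to $f$ to obtain a weak equivalence $g \colon Y \xrightarrow{\sim} Y'$ and a factorization
$$X_{0,n} = Z_0 \rightarrowtail Z_1 \rightarrowtail \cdots \rightarrowtail Z_m \xrightarrow{\ q,\,\sim\ } Y'$$
of $gf$ with $Z_i/Z_{i-1} \in \A$ for all $i$. I then concatenate this filtration onto the original staircase to produce an object $X'_{\bullet,\bullet} \in \s_{n+m} \C_\A$ whose filtration is
$$\ast \rightarrowtail X_{0,1} \rightarrowtail \cdots \rightarrowtail X_{0,n}=Z_0 \rightarrowtail Z_1 \rightarrowtail \cdots \rightarrowtail Z_m.$$
After identifying $X_{\bullet,\bullet}$ with its $m$-fold iterated degeneracy $\widetilde{X}_{\bullet,\bullet} \in \s_{n+m}\C_\A$ (constant equal to $X_{0,n}$ in columns $n$ through $n+m$), I define $f'$ to be the morphism $\widetilde{X}_{\bullet,\bullet} \to X'_{\bullet,\bullet}$ which is the identity on the first $n$ columns and the canonical inclusion $Z_0 \rightarrowtail Z_j$ in the $(n+j)$-th column. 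Setting $j := g$ and taking $q$ as supplied by the dévissage condition, the required square commutes by construction, because $ev_\infty(f')$ is the composite cofibration $X_{0,n}=Z_0 \rightarrowtail \cdots \rightarrowtail Z_m$ and $q \circ (Z_0 \rightarrowtail \cdots \rightarrowtail Z_m) = g \circ f$.

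The only subtle point — and what I expect to be the main thing to check — is that $f'$ is genuinely a morphism in the subcategory $\sgl_\infty \C_\A$, i.e. that each induced map on successive cofibers lies in $\A$. In columns $1,\dots,n$ this is automatic since those maps are identities, and in columns $n+1,\dots,n+m$ the successive cofibers of $\widetilde{X}_{\bullet,\bullet}$ are zero while those of $X'_{\bullet,\bullet}$ are $Z_j/Z_{j-1} \in \A$, so the induced maps $\ast \to Z_j/Z_{j-1}$ automatically belong to $\A$ because $\A$ is a Waldhausen subcategory containing the zero object. With this confirmed, the Approximation Theorem delivers the asserted $K$-equivalence.
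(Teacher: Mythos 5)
Your proposal is correct and follows the same route as the paper: verify (App1) directly from the definition of eventual weak equivalences, and verify (App2) by applying the dévissage condition to $f$ and concatenating the resulting filtration of $gf$ onto the given staircase (viewed, via the degeneracies $i_{n,\A}$, as an object of $\s_{n+m}\C_{\A}$) to produce $f'$ and the commuting square. Your added check that $f'$ lies in the subcategory $\sgl_\infty\C_\A$ is a correct and welcome detail that the paper leaves implicit.
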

\begin{proof}
We claim that the functor $ev_{\infty}: \sgl_{\infty} \C_{\A} \to \C$ satisfies the conditions of the Approximation Theorem (Theorem \ref{approx-thm}). (App1) holds by definition. For (App2), let $(X_{\bullet, \bullet})$ be an object in $\sgl_{\infty} \C_{\A}$ and consider a morphism in $\C$
$$f: ev_{\infty}(X_{\bullet, \bullet}) = X_{0,n} \to Y.$$ 
Using the d\'{e}vissage condition applied to the morphism $f$, there is a weak equivalence $g: Y \stackrel{\sim}{\rightarrow} Y'$ and 
a factorization of the morphism $gf$,
$$X_{0,n} = Z_0 \rightarrowtail Z_1 \rightarrowtail \cdots \rightarrowtail Z_m \stackrel{\sim}{\to} Y'$$
such that (choices of) the cofibers $Z_i/Z_{i-1}$ are in $\A$. Consequently, the combination of the two filtered objects 
$$(X_{\bullet, \bullet'})_{0 \leq \bullet \leq \bullet' \leq n} \ \ \text{and} \ \ (Z_{\bullet})$$ 
can be extended, by making choices of cofibers, to a new object $(Z_{\bullet, \bullet})$ in $\sgl_{\infty} \C_{\A}$. There is an obvious canonical morphism 
in $\sgl_{\infty} \C_{\A}$
$$f': (X_{\bullet, \bullet}) \to (Z_{\bullet, \bullet})$$
whose components $f'_{0, \bullet}$ are given either by identities or by a composition of cofibrations in the factorization of the map $X_{0,n} \to Y'$ considered above. 
Lastly, the weak equivalence $ev_{\infty}(Z_{\bullet, \bullet})= Z_m \stackrel{\sim}{\to} Y'$ fits in a commutative diagram 
\[
 \xymatrix{
 ev_{\infty}(X_{\bullet, \bullet}) \ar[r]^(.6)f \ar[d]_{ev_{\infty}(f')} &  Y \ar[d]^{\sim}_g \\
 ev_{\infty}(Z_{\bullet, \bullet}) \ar[r]^(.6){\sim} &  Y'.
 }
\]
This shows that (App2) is satisfied and then the result follows from Theorem \ref{approx-thm}.
\end{proof}

\begin{remark} \label{uniqueness-filtrations}
As a consequence of the results stated in Remark \ref{remark-approx}, the proof of Proposition \ref{devissage-cond-prop} also shows that the exact functor 
$ev_{\infty} \colon \sgl_{\infty} \C_{\A} \to \C$ induces an equivalence between the underlying homotopy theories. 
\end{remark} 

\subsection{The main theorem} \label{main-thm-1} 
A d\'evissage theorem of single type is a statement asserting that for certain Waldhausen pairs $(\C, \A)$ satisfying a d\'evissage--type condition, the induced map $K(\A) \xrightarrow{\simeq} K(\C)$ is a homotopy equivalence. Combining our admissibility and d\'evissage conditions (see Definition \ref{admissible-def} and Definition \ref{devissage-cond-def}), we obtain our 
main d\'evissage theorem of this type.

\begin{theorem}[Single Type D\'{e}vissage] \label{devissage-1} 
Let $(\C, \A)$ be an admissible Waldhausen pair which satisfies the d\'{e}vissage condition. Then the inclusion $\A \hookrightarrow \C$ induces a homotopy equivalence $K(\A) \stackrel{\simeq}{\rightarrow} K(\C).$
\end{theorem}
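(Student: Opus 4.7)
The plan is to assemble the homotopy fiber sequence of Proposition~\ref{fiber seq} (at $n=\infty$) together with the three identifications already established. Admissibility of $(\C,\A)$ implicitly requires goodness, so Lemma~\ref{additional-props}(iii) makes both $\s_\infty\C_\A$ and $\sgl_\infty\C_\A$ good Waldhausen categories and lets the fiber sequence be formed. The identifications are: $K(\s_\infty\C_\A) \simeq \colim_n \prod_1^n K(\A)$ from Proposition~\ref{ho-eq-1}; $K(\s^w_\infty\C_\A) \simeq \colim_n \prod_1^{n-1} K(\A)$ via $\tau_\infty$ from admissibility; and $K(\sgl_\infty\C_\A) \xrightarrow{\simeq} K(\C)$ via $ev_\infty$ from Proposition~\ref{devissage-cond-prop}. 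The fiber sequence thus reads
\[
\colim_n \textstyle\prod_1^{n-1} K(\A) \xrightarrow{\phi} \colim_n \prod_1^n K(\A) \longrightarrow K(\C),
\]
so the problem reduces to computing $\phi$ and reading off its cofiber.

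The first step is to compute $\phi$ on homotopy groups. Combining the explicit formula for $\tau_n$ with the cofiber identification $K(\s_n\C_\A) \simeq \prod_1^n K(\A)$ used in Proposition~\ref{ho-eq-1}, the composite of $\tau_n$ with the inclusion $\s^w_n\C_\A\hookrightarrow\s_n\C_\A$ carries $(A_1,\dots,A_{n-1})$ to the tuple of successive cofibers $(A_1,\, A_2\vee\Sigma A_1,\, \dots,\, A_{n-1}\vee\Sigma A_{n-2},\, \Sigma A_{n-1})$. Using $[\Sigma X]=-[X]$ in $K$-theory and passing to the colimit (identified on both sides with $\bigoplus_{i\geq 1}\pi_*(\A)$), $\phi$ becomes on homotopy groups the ``telescoping difference'' map $(a_i)_{i\geq 1} \mapsto (a_1,\, a_2-a_1,\, a_3-a_2,\,\dots)$. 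An elementary check shows this map is injective and its image consists precisely of the finitely-supported sequences $(b_i)$ with $\sum_i b_i = 0$; the long exact sequence therefore collapses to a short exact sequence and the sum map induces an isomorphism $\overline{\sigma}\colon \pi_*(K(\C)) \xrightarrow{\cong} \pi_*(\A)$.

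To identify $\overline{\sigma}$ with the natural inclusion, I would consider the exact functor $c\colon \A\hookrightarrow\sgl_\infty\C_\A$ sending $A$ to the constant staircase $(\ast \rightarrowtail A = A = \cdots = A)$: its successive cofibers form the tuple $(A,\ast,\ast,\dots)$, whose class in $\bigoplus\pi_*(\A)$ is $(a,0,0,\dots)$, on which the sum map equals $a$. Hence the composite $K(\A) \xrightarrow{K(c)} K(\sgl_\infty\C_\A) \xrightarrow{\overline{\sigma}} K(\A)$ is the identity on $\pi_*$, which forces $K(c)$ to be an equivalence; since $ev_\infty\circ c$ is by construction the inclusion $\A\hookrightarrow\C$, postcomposing with the equivalence $K(\sgl_\infty\C_\A)\xrightarrow{\simeq} K(\C)$ of Proposition~\ref{devissage-cond-prop} yields the theorem. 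The main technical obstacle will be the bookkeeping in the middle step---propagating the formula for $\tau_n$ through the cofiber identification and through stabilization while tracking indices and signs---but the relevant compatibilities of $\tau_n$ with $i^w_{n,\A}$ and of the cofiber functors with $i_{n,\A}$ are already recorded in Section~\ref{basics-admissibility} and in the proof of Proposition~\ref{ho-eq-1}, so this is essentially bookkeeping rather than conceptual.
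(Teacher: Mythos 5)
Your proposal is correct and follows essentially the same route as the paper: the same homotopy fiber sequence from Proposition~\ref{fiber seq} at $n=\infty$, the same three identifications (admissibility via $\tau_\infty$, Proposition~\ref{ho-eq-1}, and Proposition~\ref{devissage-cond-prop}), and the same recognition that the first map is the telescoping difference whose cofiber is $K(\A)$ via the sum map, identified with the inclusion through the constant-staircase functor. The only difference is presentational --- the paper compares two fiber sequences and deduces the equivalence on the base (noting $\pi_0$-surjectivity), whereas you run the long exact sequence of homotopy groups directly; both amount to the same computation.
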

\begin{proof}
Consider the following diagram
\begin{equation} \label{diagram}
\xymatrix{
K(\s^w_n \C_{\A}) \ar[r]  & K(\s_n \C_{\A}) \ar@/_/[d]^{\simeq} \ar[r] & K(\sgl_n \C_{\A}) \\
\prod_{1}^{n-1} K(\A) \ar[r]  \ar[u]_{K(\tau_n)} & \prod_{1}^n K(\A) \ar[r]^(.6){\vee} \ar@/_/[u] & K(\A). \ar[u]
}
\end{equation}
The middle vertical maps are homotopy equivalences from Proposition \ref{ho-eq-1}. The left vertical map was defined in Subsection \ref{basics-admissibility}. The left bottom map is induced by the exact functor 
$$(A_1, \cdots, A_{n-1}) \mapsto (A_1, A_2 \vee \Sigma A_1, \cdots, A_{n-1} \vee \Sigma A_{n-2}, \Sigma A_{n-1}).$$
The left square commutes up to homotopy by direct inspection. 

The bottom map $\prod_1^n K(\A) \to K(\A)$ is induced by the coproduct functor. The right vertical map is the canonical inclusion $\A = \sgl_1 \C_{\A} \to \sgl_n \C_{\A}$. The right square is also homotopy commutative, since the underlying exact functors are naturally weakly equivalent. Note that the bottom 
row defines a homotopy fiber sequence and the top row is a homotopy fiber sequence by Proposition \ref{fiber seq}. 

Passing to the colimit as $n \to \infty$, we obtain a homotopy commutative diagram 
\begin{equation} \label{diagram}
\xymatrix{
K(\s^w_{\infty} \C_{\A}) \ar[r]  & K(\s_{\infty} \C_{\A}) \ar@/_/[d]^{\simeq} \ar[r] & K(\sgl_{\infty} \C_{\A}) \\
\underrightarrow{\mathrm{colim}}_{n} \prod_{1}^{n-1} K(\A) \ar[r] \ar[u]^{\simeq}_{K(\tau_{\infty})} & \underrightarrow{\mathrm{colim}}_{n} \prod_{1}^n K(\A) \ar[r]^(.6){\vee} \ar@/_/[u] & K(\A) \ar[u]
}
\end{equation}
 whose the rows are homotopy fiber sequences (of infinite loop spaces) and $K(\tau_{\infty})$ is a homotopy equivalence because $(\C, \A)$ is admissible by assumption. 
Hence the right vertical map $K(\A) \to K(\sgl_{\infty} \C_{\A})$ is a homotopy equivalence. (Note that both of the right horizontal maps are $\pi_0$-surjective.)

By Proposition \ref{devissage-cond-prop}, the exact functor $ev_{\infty}$ induces a homotopy equivalence $K(\sgl_{\infty} \C_{\A}) \xrightarrow{\simeq} K(\C)$. The map in $K$-theory induced by the inclusion $\A \subset \C$ agrees with the composition of homotopy equivalences $K(\A) \xrightarrow{\simeq} K(\sgl_{\infty} \C_{\A}) \xrightarrow{\simeq} K(\C)$ and the result follows. 
\end{proof}

\begin{remark} \label{converse-devissage}
The proof of Theorem \ref{devissage-1} shows also the following converse statement: if $(\C, \A)$ satisfies the d\'evissage condition and the inclusion $\A \hookrightarrow \C$ induces a homotopy equivalence $K(\A) \stackrel{\simeq}{\to} K(\C)$, then the pair $(\C, \A)$ is admissible. Moreover,  Diagram \eqref{diagram} shows that $(\C, \A)$ is admissible if and only if the map $K(\A) \to K(\sgl_{\infty} \C_{\A})$ is a homotopy equivalence. 
\end{remark}

\subsection{D\'evissage functors} \label{devissage-functors}
The d\'evissage condition for a (good) Waldhausen pair $(\C, \A)$ is often a consequence of the existence of a \emph{d\'evissage functor} on $\C$ in the following sense.

\begin{definition}
A d\'evissage functor for $(\C, \A)$ is a (not necessarily exact!) 
functor
$$D: \C \to  E(\C, \C, \A), \ X \mapsto D(X) = (L(X) \rightarrowtail X \twoheadrightarrow A(X))$$
where $L: \C \to \C$ and $A: \C \to \A$ are (not necessarily exact!) functors such that:
\begin{itemize}
\item[(1)] $D$ preserves cofibrations,
\item[(2)] for each $X \in \C$, there is $n \geq 1$ such that $L^n(X) = \ast$. 
\end{itemize}
\end{definition}

\begin{proposition} \label{devissage-functor-prop}
Let $D$ be a d\'evissage functor for $(\C, \A)$. Then $(\C, \A)$ satisfies the d\'evissage condition. Moreover, if $(\C, \A)$ is also admissible, then the inclusion $\A \hookrightarrow \C$ induces a homotopy equivalence $K(\A) \stackrel{\simeq}{\to} K(\C)$. 
\end{proposition}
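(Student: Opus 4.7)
The plan is to reduce the proposition to Theorem \ref{devissage-1}. Since the second sentence is an immediate consequence of Theorem \ref{devissage-1} once the d\'evissage condition is in hand, the whole task is to show that the existence of a d\'evissage functor $D$ forces the d\'evissage condition of Definition \ref{devissage-cond-def}.

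Let $f \colon X \to Y$ be a morphism in $\C$. Since $(\C, \A)$ is a good Waldhausen pair, $\C$ admits factorizations, so we write $f = p \circ i$ with $i \colon X \rightarrowtail Z$ a cofibration and $p \colon Z \stackrel{\sim}{\to} Y$ a weak equivalence. Using condition (2) in the definition of a d\'evissage functor, choose $n$ large enough so that $L^n(X) = L^n(Z) = \ast$. A straightforward induction on $k$, using functoriality of $D$ together with the fact that $D$ preserves cofibrations -- interpreted as saying that for any cofibration $u \colon V \rightarrowtail V'$ in $\C$ the induced maps $L(u) \colon L(V) \rightarrowtail L(V')$ and $A(u) \colon A(V) \rightarrowtail A(V')$ are cofibrations in $\C$ and in $\A$ respectively -- produces a commutative ladder
\[
\xymatrix{
\ast = L^n(X) \ar@{>->}[r] \ar@{>->}[d] & L^{n-1}(X) \ar@{>->}[r] \ar@{>->}[d] & \cdots \ar@{>->}[r] & L(X) \ar@{>->}[r] \ar@{>->}[d] & X \ar@{>->}[d] \\
\ast = L^n(Z) \ar@{>->}[r] & L^{n-1}(Z) \ar@{>->}[r] & \cdots \ar@{>->}[r] & L(Z) \ar@{>->}[r] & Z
}
\]
in which every horizontal successive cofiber lies in $\A$ and each induced vertical map on horizontal cofibers, $A(L^{k-1}(X)) \rightarrowtail A(L^{k-1}(Z))$, is a cofibration in $\A$.

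Now define $X_j := X \cup_{L^{n-j}(X)} L^{n-j}(Z)$ for $0 \leq j \leq n$; then $X_0 = X$, $X_n = Z$, and each $X_j \rightarrowtail X_{j+1}$ is a cofibration by the Waldhausen pushout axiom. The key identification is
$$X_{j+1}/X_j \ \cong \ A(L^{n-j-1}(Z))/A(L^{n-j-1}(X)) \ \in \ \A.$$
This follows from a $3\times 3$--diagram chase: cobase change gives $X_j/X = L^{n-j}(Z)/L^{n-j}(X)$, so $X_{j+1}/X_j$ coincides with the cofiber of the map $L^{n-j}(Z)/L^{n-j}(X) \rightarrowtail L^{n-j-1}(Z)/L^{n-j-1}(X)$; applying the $3\times 3$--lemma to the two cofiber sequences $L^{n-j}(-) \rightarrowtail L^{n-j-1}(-) \twoheadrightarrow A(L^{n-j-1}(-))$ (for $X$ and for $Z$) identifies this with the cofiber of the cofibration $A(L^{n-j-1}(X)) \rightarrowtail A(L^{n-j-1}(Z))$ in $\A$, which lies in $\A$.

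Composing the resulting filtration $X = X_0 \rightarrowtail X_1 \rightarrowtail \cdots \rightarrowtail X_n = Z$ with $p \colon Z \stackrel{\sim}{\to} Y$ yields the factorization required by Definition \ref{devissage-cond-def} (with $g = \mathrm{id}_Y$ and $Y' = Y$), so $(\C, \A)$ satisfies the d\'evissage condition. The main technical point is pinning down precisely what ``$D$ preserves cofibrations'' entails in terms of the Waldhausen structure on the subcategory $E(\C, \C, \A) \subset \s_2 \C$; once that is clarified, the inductive construction of the ladder and the $3\times 3$ cofiber calculation are routine. Invoking Theorem \ref{devissage-1} under the additional admissibility hypothesis then produces the homotopy equivalence $K(\A) \stackrel{\simeq}{\to} K(\C)$, completing the argument.
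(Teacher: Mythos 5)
Your proposal is correct and follows essentially the same route as the paper: factor $f$ as a cofibration followed by a weak equivalence, build the interpolating objects $X\cup_{L^{k}(X)}L^{k}(Y)$ from the $L$-towers, and identify the successive cofibers with the cofibers of $A(L^{k}(X))\rightarrowtail A(L^{k}(Y))$, which lie in $\A$ because $D$ preserves cofibrations; the second claim is then Theorem \ref{devissage-1}. Your explicit $3\times 3$ justification of the cofiber identification is a detail the paper leaves implicit, but the argument is the same.
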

\begin{proof}
Given a cofibration $i \colon X \rightarrowtail Y$, we consider for $n$ large enough the morphism of filtered objects 
\[
\xymatrix{
\ast = L^{n+1}(X) \ar@{>->}[r] \ar@{=}[d] & L^n(X) \ar@{>->}[r] \ar@{>->}[d] & L^{n-1}(X) \ar@{>->}[r] \ar@{>->}[d] & \cdots \ar@{>->}[d] \ar@{>->}[r] & L(X) \ar@{>->}[r] \ar@{>->}[d] & X \ar@{>->}[d] \\
\ast = L^{n+1}(Y) \ar@{>->}[r]  & L^n(Y) \ar@{>->}[r]  & L^{n-1}(Y) \ar@{>->}[r] & \cdots \ar@{>->}[r] & L(Y) \ar@{>->}[r]  & Y. \\
}
\]
Then the objects $L^k(f)$, $1 \leq k \leq n$, defined by the pushouts 
\[
\xymatrix{
L^k(X) \ar@{>->}[r] \ar@{>->}[d] & X \ar@{>->}[d] \\
L^k(Y)  \ar@{>->}[r] & L^{k}(f) \\
}
\]
yield a factorization of $i \colon X \rightarrowtail Y$,
$$X \rightarrowtail L^{n}(f) \rightarrowtail L^{n-1}(f) \rightarrowtail \cdots \rightarrowtail L^1(f) \rightarrowtail L^0(f) := Y.$$
Here the morphisms are cofibrations in $\C$ because $D$ preserves cofibrations by assumption. The cofiber of $L^{k+1}(f) \rightarrowtail 
L^{k}(f)$, $k \geq 0$, is the cofiber of $A(L^{k}(X)) \rightarrowtail A(L^k(Y))$ which is in $\A$. Hence the d\'evisage condition
is satisfied for cofibrations. For an arbitrary morphism $f \colon X \to Y$, we choose a factorization $X \stackrel{i}{\rightarrowtail} Y' \stackrel{\sim}{\to} Y$ and find a factorization of $i$ as shown above. The second claim is a consequence of Theorem \ref{devissage-1}. 
\end{proof}

\begin{remark}(Exact d\'evissage functors)
As already suggested in the proof of Proposition \ref{devissage-functor-prop}, a d\'evissage functor $D$ for $(\C, \A)$ can be iterated in order to define an exact functor $D_{\infty} \colon \C \to \sgl_{\infty} \C_{\A}$ which is a section to $ev_{\infty}$. If, moreover, the d\'evissage functor $D$ is \emph{exact}, which is too strong an assumption in general, then $D_{\infty}$ actually defines an exact functor $D_{\infty} \colon \C \to \s_{\infty} \C_{\A}$. Assuming also that $D_{\infty}$ sends $\A$ to $\s_{\infty} \A$, then it can be shown that the map $K(\A) \xrightarrow{\simeq} K(\C)$ is a homotopy equivalence, with homotopy inverse induced by the composite exact functor:
$$\C \xrightarrow{D_{\infty}} \s_{\infty} \C_{\A} \xrightarrow{\underrightarrow{\colim}_n q_n} \underrightarrow{\colim}_n \ \A^n \xrightarrow{\vee} \A.$$

\end{remark}

\begin{example}(Relation to the Additivity Theorem) \label{additivity-example}
Let $\C$ be a good Waldhausen category with a cylinder functor that satisfies the cylinder axiom. We may consider the good Waldhausen pair that corresponds to the exact inclusion functor (cf. Theorem \ref{addit-thm}):
$$\C \times \C \hookrightarrow \s_2 \C, \ \ (X, Y) \mapsto (X \rightarrowtail X \vee Y \twoheadrightarrow Y).$$ 
There is a d\'evissage functor $D$ for $(\s_2 \C, \C \times \C)$ which is defined on objects by sending $(A \rightarrowtail C \twoheadrightarrow B)$ to 
the cofiber sequence in $E(\s_2 \C, \s_2 \C, \C \times \C)$:
$$D \colon (A \rightarrowtail C \twoheadrightarrow B) \mapsto \big((A = A \twoheadrightarrow \ast) \rightarrowtail (A \rightarrowtail C \twoheadrightarrow B) \twoheadrightarrow (\ast \rightarrowtail B = B)\big).$$
By Proposition \ref{devissage-functor-prop}, it follows that $(\s_2 \C, \C \times \C)$ satisfies the d\'evissage condition. According to the Additivity Theorem (Theorem \ref{addit-thm}), the map 
$$K(\C \times \C) \xrightarrow{\simeq} K(\s_2 \C)$$
is a homotopy equivalence. Equivalently, note that the d\'evissage functor $D$ is actually exact in this case, so the argument in the preceding remark applies. 

Then it follows from Remark \ref{converse-devissage} that the good Waldhausen pair $(\s_2 \C, \C \times \C)$ is admissible and therefore it satisfies the assumptions of Theorem \ref{devissage-1}. In this indirect way, we may view the statement of the Additivity Theorem for $\C$ as a special case of single type d\'evissage (Theorem \ref{devissage-1}) -- whose proof, of course, made essential use of the Additivity Theorem.   
\end{example}

\subsection{Example: Abelian categories} \label{abelian-example}

Let $\mathcal C$ be a (small) abelian category. This may be regarded as a Waldhausen category in the standard way by defining the cofibrations to be the monomorphisms and the weak equivalences to be the isomorphisms. Let $\Ch(\mathcal C)$ denote the Waldhausen category of bounded chain complexes in 
$\mathcal C$, where the cofibrations are the monomorphisms and the weak equivalences are the quasi-isomorphisms of chain complexes. According to the Gillet--Waldhausen theorem \cite{TT}, the exact inclusion functor $\mathcal C \to \Ch(\mathcal C)$, as chain complexes concentrated in degree $0$, induces a homotopy equivalence in $K$-theory 
\begin{equation} \label{Gillet-Waldhausen-eq} 
K(\mathcal C) \xrightarrow{\simeq} K(\Ch(\mathcal C)).
\end{equation}

Let $\mathcal A \subset \mathcal C$ be a full exact abelian subcategory. The corresponding inclusion of chain complexes $\Ch(\mathcal A) \subset \Ch(\mathcal C)$ defines a Waldhausen subcategory. In addition, $(\Ch(\cab), \Ch(\a))$ is a good Waldhausen pair using the standard cylinder objects for 
chain complexes. We emphasize that the exact inclusion $\mathrm{Ch}^b(\a) \subset \mathrm{Ch}^b(\cab)$ is full but not homotopically full in general.

\medskip

We recall Quillen's d\'evissage theorem for the $K$-theory of abelian categories. 

\begin{theorem}[Quillen \cite{Qu}, D\'{e}vissage] \label{quillen}
Let $\mathcal C$ be an abelian category and let $\mathcal A \subset \mathcal C$ be a full exact abelian subcategory which is closed in $\mathcal C$ under subobjects and quotients. Suppose that every object $C \in \mathcal C$ admits a finite 
filtration 
$$0=C_0 \subseteq C_1 \subseteq C_2 \subseteq \cdots \subseteq C_{n-1} \subseteq C_n = C$$
such that $C_i / C_{i-1} \in \mathcal A$ for all $i = 1, \cdots, n$. Then the inclusion $\mathcal A \hookrightarrow \mathcal C$ induces a homotopy 
equivalence $K(\mathcal A) \stackrel{\simeq}{\to} K(\mathcal C)$.
\end{theorem}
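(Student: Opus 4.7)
The plan is to deduce Quillen's theorem by combining the Gillet--Waldhausen identification \eqref{Gillet-Waldhausen}, which supplies $K(\a) \simeq K(\Ch(\a))$ and $K(\c) \simeq K(\Ch(\c))$, with the single type d\'evissage theorem (Theorem \ref{devissage-1}) applied to the good Waldhausen pair $(\Ch(\c), \Ch(\a))$. A two-out-of-three argument reduces the statement to showing that the exact inclusion $\Ch(\a) \hookrightarrow \Ch(\c)$ is a $K$-equivalence, and this in turn amounts to verifying (i) the d\'evissage condition and (ii) admissibility for $(\Ch(\c), \Ch(\a))$.

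For (i), I would construct a d\'evissage functor (Subsection \ref{devissage-functors}) and invoke Proposition \ref{devissage-functor-prop}. Quillen's filtration hypothesis on $\c$ provides, for each $C \in \c$, a finite filtration $0 \subseteq C_1 \subseteq \cdots \subseteq C_n = C$ with successive quotients in $\a$; applying this degreewise (or iterating the construction ``largest subobject whose quotient lies in $\a$'' degree by degree) produces a functor $D \colon \Ch(\c) \to E(\Ch(\c), \Ch(\c), \Ch(\a))$ sending each $X$ to a short exact sequence $L(X) \rightarrowtail X \twoheadrightarrow A(X)$ with $A(X) \in \Ch(\a)$. By construction $D$ preserves cofibrations and $L^m(X) = \ast$ for $m$ large enough. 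Arbitrary morphisms are reduced to cofibrations by the standard mapping-cylinder trick, which yields Definition \ref{devissage-cond-def} in full.

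The main obstacle will be (ii), admissibility of $(\Ch(\c), \Ch(\a))$. This pair is neither replete nor in general does it satisfy (WHEP), so the criteria of Subsections \ref{replete-subsec} and \ref{whep} are not directly applicable. I would instead appeal to Proposition \ref{criterion}(1) and try to show, for each $n \geq 2$, that $p_n^1 \colon \s^w_n \Ch(\c)_{\Ch(\a)}^1 \to \Ch(\a)$ induces a homotopy equivalence in $K$-theory; equivalently, by Proposition \ref{criterion}(2), one may work with the isomorphism-localized versions, which sit more naturally inside a suitable abelian category. Concretely, objects of $\s^w_n \Ch(\c)_{\Ch(\a)}^1$ are acyclic bounded complexes in $\c$ equipped with a length-$n$ filtration by subcomplexes whose successive cofibers lie in $\Ch(\a)$, with weak equivalences detected by the first stage only. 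The natural attack is to identify such data with bounded complexes in an auxiliary abelian category of ``filtered acyclic'' objects built from $\c$ and $\a$, and then to identify its $K$-theory with $K(\a)$ by appealing once more to Quillen's theorem (Theorem \ref{quillen}). Precisely because this step invokes Theorem \ref{quillen}, the resulting argument does not constitute an independent proof of Quillen's theorem, but only a reinterpretation of it within our abstract framework, matching the comment already made in the introduction.

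Granting (i) and (ii), Theorem \ref{devissage-1} yields $K(\Ch(\a)) \xrightarrow{\simeq} K(\Ch(\c))$, and combining this with the two instances of the Gillet--Waldhausen theorem produces the desired homotopy equivalence $K(\a) \xrightarrow{\simeq} K(\c)$.
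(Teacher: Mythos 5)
Your proposal does not prove the statement: it is circular, and you acknowledge as much yourself. The step you label (ii) --- admissibility of $(\Ch(\c), \Ch(\a))$ --- is made to rest on ``appealing once more to Quillen's theorem (Theorem \ref{quillen})'', which is exactly the statement to be proved. An argument that invokes its own conclusion establishes nothing; what you have written is at best a consistency check of the abstract framework against Quillen's theorem, not a proof of it. This matches the paper's own position: Theorem \ref{quillen} is taken as an external input, cited from \cite{Qu} and proved there by entirely different means (the $Q$-construction and Quillen's Theorem A applied to the fibers of $Q\a \to Q\c$). In Subsection \ref{abelian-example} the paper runs the implication in the direction opposite to yours: it \emph{assumes} Theorem \ref{quillen}, combines it with the Gillet--Waldhausen equivalence \eqref{Gillet-Waldhausen} and Proposition \ref{quillen-devissage-cond}, and then uses Remark \ref{converse-devissage} to \emph{deduce} that $(\Ch(\c), \Ch(\a))$ is admissible. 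The paper explicitly records that an admissibility proof independent of Theorem \ref{quillen} is not known; supplying one is precisely the missing idea that would be needed to turn your outline into a genuine proof.

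A secondary caveat on your step (i): the ``largest subobject whose quotient lies in $\a$'' need not exist in an arbitrary small abelian category (the family of subobjects $L \subseteq X$ with $X/L \in \a$ is closed under finite intersections but need not have a minimum without a chain condition), so the d\'evissage-functor route of Subsection \ref{devissage-functors} is not guaranteed to apply here. This is harmless in itself, because Definition \ref{devissage-cond-def} only asks for non-functorial filtrations, and the paper's Proposition \ref{quillen-devissage-cond} establishes the d\'evissage condition directly from Lemma \ref{strong devissage condition} by filtering a cofibration of chain complexes degree by degree. But it does not repair the circularity in (ii).
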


We refer to \cite{CZ} for an extension of this d\'evissage theorem to the $K$-theory of ACGW-categories and applications in this context.
  
The purpose of this subsection is to relate Quillen's d\'evissage theorem to Theorem \ref{devissage-1} when applied to the respective categories of bounded chain complexes. The d\'{e}vissage condition in Quillen's theorem is seemingly weaker than the d\'{e}vissage condition of Definition \ref{devissage-cond-def} -- while Theorem \ref{quillen} requires the existence of appropriate filtrations of objects, the d\'{e}vissage condition of Definition \ref{devissage-cond-def} essentially requires the existence of such factorizations for all maps of chain complexes. However, it turns out that the d\'evissage condition for $(\Ch(\cab), \Ch(\a))$ is satisfied under the assumptions of Quillen's theorem.

\begin{lemma} \label{strong devissage condition}
Let $\a \subset \cab$ be as in Theorem \ref{quillen}. Then for every $C \in \cab$ and subobject $C' \subseteq C$, there is a filtration 
$$C' = C'_0 \subseteq C'_1 \subseteq C'_2 \subseteq \cdots \subseteq C'_{n-1} \subseteq C'_n = C$$
such that $C'_i / C'_{i-1} \in \a$ for all $i=1, \cdots, n$. 
\end{lemma}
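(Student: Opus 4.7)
The plan is to reduce to the existing d\'evissage hypothesis by passing to the quotient $C/C'$ and then pulling back the filtration to $C$.

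First I would let $q \colon C \twoheadrightarrow C/C'$ denote the canonical quotient morphism in $\mathcal C$. By the standing d\'evissage hypothesis on $(\mathcal C, \mathcal A)$, the object $C/C' \in \mathcal C$ admits a finite filtration
\[
0 = D_0 \subseteq D_1 \subseteq \cdots \subseteq D_{n-1} \subseteq D_n = C/C'
\]
with $D_i / D_{i-1} \in \mathcal A$ for all $i$. I would then define $C'_i$ to be the preimage of $D_i$ under $q$, i.e.\ the subobject of $C$ fitting into a pullback square (or kernel of the composite $C \twoheadrightarrow C/C' \twoheadrightarrow (C/C')/D_i$), so that $C'_0 = C'$, $C'_n = C$, and $C'_{i-1} \subseteq C'_i$.

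Next I would verify that the successive quotients satisfy $C'_i / C'_{i-1} \cong D_i / D_{i-1}$; this is the third isomorphism theorem in an abelian category, applied to the chain $C'_{i-1} \subseteq C'_i \subseteq C$ viewed through $q$, or equivalently obtained by applying the snake lemma to the evident diagram comparing the short exact sequences $0 \to C' \to C'_i \to D_i \to 0$ for consecutive $i$. Since $D_i/D_{i-1} \in \mathcal A$ by construction, the quotients $C'_i/C'_{i-1}$ lie in $\mathcal A$ as required.

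There is no real obstacle here; the lemma is a direct consequence of the object-level d\'evissage hypothesis together with the universal property of the quotient and the third isomorphism theorem. In particular, I would not need to invoke closure of $\mathcal A$ under subobjects and quotients in $\mathcal C$ at this step --- that hypothesis will enter only later when the lemma is used to verify the d\'evissage condition of Definition \ref{devissage-cond-def} for morphisms between chain complexes in $\mathrm{Ch}^b(\mathcal C)$.
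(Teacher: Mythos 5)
Your proof is correct, but it takes a genuinely different route from the paper's. The paper starts from a filtration $0 = C_0 \subseteq \cdots \subseteq C_n = C$ of $C$ itself (given by the d\'evissage hypothesis) and \emph{pushes it forward} by setting $C'_i = C' + C_i$ (the pushout of $C' \leftarrow C' \cap C_i \rightarrow C_i$ inside $C$); the successive quotients $C'_i/C'_{i-1}$ are then only \emph{quotients} of $C_i/C_{i-1}$, so the paper must invoke the hypothesis that $\a$ is closed under quotients in $\c$ to conclude. You instead filter the quotient $C/C'$ and \emph{pull back} along $C \twoheadrightarrow C/C'$; since the subobject lattice of $C/C'$ is isomorphic to the lattice of subobjects of $C$ containing $C'$, compatibly with forming quotients, you get $C'_i/C'_{i-1} \cong D_i/D_{i-1}$ on the nose and need no closure property of $\a$ at this step --- your observation to that effect is accurate, and the closure hypotheses are indeed only needed later, in the chain-complex argument of Proposition \ref{quillen-devissage-cond}. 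Both arguments are complete and short; yours is marginally more economical in its use of hypotheses, while the paper's has the mild advantage that the constructed filtration of $C$ refines the originally chosen one, which matches how the lemma is then applied degreewise to chain complexes.
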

\begin{proof}
There is a filtration $0 = C_0 \subseteq C_1 \subseteq C_2 \subseteq \cdots \subseteq C_{n-1} \subseteq C_n = C$
such that $C_i / C_{i-1} \in \a$ for $i \geq 1$. For $i \geq 0$, we consider the pushouts of subobjects of $C$:
\[
 \xymatrix{
 C' \cap C_i \ar[r] \ar[d] & C_i \ar[d] \\
 C' \ar[r] & C'_i. 
 }
\]
Thus, there is a filtration $C' \subseteq C'_1 \subseteq C'_2 \subseteq \cdots \subseteq C'_{n-1} \subseteq C'_n = C$ and the cokernels $C'_i / C'_{i-1}$ 
are quotients of $C_i / C_{i-1}$, hence they are again in $\a$.  
\end{proof}

\begin{proposition}  \label{quillen-devissage-cond}
Let $\a \subset \cab$ be as in Theorem \ref{quillen}. Then $(\Ch(\cab), \Ch(\a))$ satisfies the d\'evissage condition. 
\end{proposition}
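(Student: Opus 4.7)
The plan is to reduce to the case of a cofibration by factoring, and then to build the desired filtration by combining the \emph{brutal truncation} filtration of a chain complex (degree by degree) with Lemma \ref{strong devissage condition} applied degreewise in $\c$.

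Given an arbitrary morphism $f \colon X \to Y$ in $\Ch(\c)$, the good Waldhausen pair $(\Ch(\c), \Ch(\a))$ admits factorizations, so I factor $f = w \circ i$ with $i \colon X \rightarrowtail Z$ a cofibration (degreewise monomorphism) and $w \colon Z \stackrel{\sim}{\to} Y$ a quasi-isomorphism. Taking $g = \mathrm{id}_Y$ (so $Y' = Y$), it suffices to produce a cofibration filtration
$X = X_0 \rightarrowtail X_1 \rightarrowtail \cdots \rightarrowtail X_m = Z$
in $\Ch(\c)$ with each successive quotient lying in $\Ch(\a)$, since precomposing with $w$ gives the required factorization of $gf$.

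Suppose $Z$ is supported in degrees $[a,b]$. For $a-1 \leq n \leq b$ I define the subcomplex $X^{(n)} \subseteq Z$ by
$(X^{(n)})_i = Z_i$ for $i \leq n$ and $(X^{(n)})_i = X_i$ for $i > n$. This is a subcomplex of $Z$: the only potentially problematic differential, $\partial \colon X_{n+1} \to Z_n$, in fact lands in $X_n$ since $i$ is a chain map, and in fact $X_n \subseteq Z_n = (X^{(n)})_n$. We have $X^{(a-1)} = X$, $X^{(b)} = Z$, and the quotient $X^{(n)}/X^{(n-1)}$ is the chain complex concentrated in degree $n$ with value $Z_n / X_n \in \c$.

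Next I refine each stage $X^{(n-1)} \rightarrowtail X^{(n)}$. Applying Lemma \ref{strong devissage condition} to the monomorphism $X_n \subseteq Z_n$ in $\c$, I obtain a filtration $X_n = F_0 \subseteq F_1 \subseteq \cdots \subseteq F_{k_n} = Z_n$ in $\c$ with each $F_j/F_{j-1} \in \a$. For each such $F_j$, let $X^{(n-1,j)} \subseteq Z$ be the subobject agreeing with $X^{(n-1)}$ in every degree $\neq n$ and with $F_j$ in degree $n$. This is again a subcomplex: the outgoing differential $\partial \colon F_j \to Z_{n-1} = (X^{(n-1,j)})_{n-1}$ is automatic, while the incoming differential $\partial \colon X_{n+1} \to X_n \subseteq F_j$ follows from $X_n \subseteq F_j$. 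The successive quotients $X^{(n-1,j)} / X^{(n-1,j-1)}$ are chain complexes concentrated in degree $n$ with entry $F_j/F_{j-1} \in \a$, hence lie in $\Ch(\a)$.

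Concatenating these refined filtrations over all $n \in [a,b]$ produces the desired chain of cofibrations from $X$ to $Z$ with all subquotients in $\Ch(\a)$. The only nontrivial point is the subcomplex verification for the refinement steps, and this is immediate from the preceding observation that once the degrees below $n$ are set equal to those of $Z$ (as in $X^{(n-1)}$) and the degrees above $n$ contain those of $X$, any choice of subobject of $Z_n$ containing $X_n$ automatically defines a subcomplex. Thus $(\Ch(\c), \Ch(\a))$ satisfies the d\'evissage condition.
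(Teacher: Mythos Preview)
Your proof is correct, and it takes a genuinely different route from the paper's. Both arguments first reduce to a cofibration $X \rightarrowtail Z$ by factoring and then invoke Lemma \ref{strong devissage condition} degreewise, but they organize the induction in opposite directions. The paper works from the \emph{top} degree downward: it first filters $X_n \subseteq Z_n$, and then, because the lower degrees are still at $X$, it must push the degree-$n$ filtration forward along the differential into degree $n-1$ (via pushouts and images) before it can continue. This introduces the auxiliary objects $X_{i,n-1}$ and $\overline{C}_{i,n-1}$ and an extra appeal to closure of $\a$ under quotients. Your approach instead works from the \emph{bottom} up via the brutal truncations $X^{(n)}$: since all degrees below $n$ are already filled in to $Z$ before you touch degree $n$, any subobject of $Z_n$ containing $X_n$ automatically gives a subcomplex, and no compatibility with the outgoing differential needs to be arranged. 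This makes your argument shorter and more transparent; the paper's top-down version is a bit more hands-on but yields the same conclusion.
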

\begin{proof}
Consider an object in $\Ch(\cab)$ 
$$C_{\bullet} = ( \cdots \to 0 \to C_n \to C_{n-1} \to \cdots \to C_1 \to C_0 \to 0 \to \cdots)$$
and a subobject $C'_{\bullet} \hookrightarrow C_{\bullet}$. First, by Lemma \ref{strong devissage condition}, we may suppose that there is a filtration of $C'_n \subseteq C_n$:
$$C'_n = C_{0,n} \subseteq C_{1,n} \subseteq \cdots \subseteq C_{m-1,n} \subseteq C_{m,n} = C_n$$
whose successive subquotients lie in $\a$. Set $X_{0, n-1} : = C'_{n-1}$. For $i=1, \cdots, m$, define $X_{i, n-1}$ inductively by pushout squares
$$
\xymatrix{
C_{i-1, n} \ar[r] \ar[d] & C_{i, n} \ar[d] \\
X_{i-1, n-1} \ar[r] & X_{i, n-1}.
}
$$
(Note that $C_{0, n} = C'_n \xrightarrow{\partial} C'_{n-1} = X_{0, n-1}$.) We obtain in this way a factorization of the inclusion $C'_{n-1} \subseteq C_{n-1}$ as follows:
$$C'_{n-1} = X_{0,n-1} \subseteq X_{1,n-1} \subseteq \cdots \subseteq X_{m-1,n-1} \subseteq X_{m,n-1} \to C_{n-1}$$
which has the required property except possibly at the last stage. Let $\overline{C}_{i, n-1}$ denote the image of 
$X_{i, n-1}$ in $C_{n-1}$. Then there is a filtration of $C'_{n-1} \subseteq C_{n-1}$
\begin{equation} \label{filtration-1}
C'_{n-1} = \overline{C}_{0,n-1} \subseteq \overline{C}_{1,n-1} \subseteq \cdots \subseteq \overline{C}_{m-1,n-1} \subseteq \overline{C}_{m,n-1} \subseteq C_{n-1}
\end{equation}
whose successive subquotients, except possibly for the last one, are in $\a$, since $\a$ is closed under taking quotients in $\cab$. By Lemma \ref{strong devissage condition}, there is a further filtration
\begin{equation} \label{filtration-2} 
\overline{C}_{m, n-1} = C_{0, n-1} \subseteq C_{1, n-1} \subseteq \cdots \subseteq C_{m', n-1} = C_{n-1} 
\end{equation}
whose successive subquotients are in $\a$. Joining these two filtrations \eqref{filtration-1} and \eqref{filtration-2}, we obtain a filtration of 
$C'_{n-1} \subseteq C_{n-1}$ with the required property that its subquotients are in $\a$. Combined with the original filtration of $C'_n \subseteq C_n$, we obtain a filtration for the inclusion: 
$$(C'_n \stackrel{\partial}{\to} C'_{n-1}) \hookrightarrow (C_n \stackrel{\partial}{\to} C_{n-1}).$$
Repeating this process inductively on the length of the chain complex, we obtain a filtration for the inclusion $C'_{\bullet} \subseteq C_{\bullet}$ as required. 
This shows that the d\'evissage condition is satisfied for cofibrations between chain complexes. For an arbitrary chain map $f \colon C'_{\bullet} \to C_{\bullet}$, we choose a factorization $C'_{\bullet} \stackrel{i}{\rightarrowtail} C''_{\bullet} \stackrel{\sim}{\to} C_{\bullet}$ and apply the construction above to the 
cofibration $i \colon C'_{\bullet} \subseteq C''_{\bullet}$. 
\end{proof}

\begin{remark}
Let $\Ch(\cab)^{\mathrm{ac}}$ (resp. $\Ch(\a)^{\mathrm{ac}}$) denote the full Waldhausen subcategory which is spanned by the acyclic chain complexes. Then the Waldhausen pair $(\Ch(\cab)^{\mathrm{ac}}, \Ch(\a)^{\mathrm{ac}})$ is again good. In addition, the factorizations constructed in Proposition \ref{quillen-devissage-cond} apply also to the Waldhausen pair $(\Ch(\cab)^{\mathrm{ac}}, \Ch(\a)^{\mathrm{ac}})$. Hence this Waldhausen pair satisfies the d\'evissage condition, too. 
\end{remark}

Let $\a \subset \cab$ be as in Theorem \ref{quillen}. Applying Theorem \ref{quillen} and the Gillet--Waldhausen homotopy equivalence \eqref{Gillet-Waldhausen-eq}, we conclude that the exact inclusion functor $\Ch(\a) \hookrightarrow \Ch(\cab)$ induces a homotopy equivalence:
$$K(\Ch(\a)) \stackrel{\simeq}{\longrightarrow} K(\Ch(\cab)).$$
Then it follows from Proposition \ref{quillen-devissage-cond} and Remark \ref{converse-devissage} that the good Waldhausen pair $(\Ch(\cab), \Ch(\a))$ is admissible -- and therefore it satisfies the assumptions of single type d\'evissage (Theorem \ref{devissage-1}). 

Clearly it would be desirable to prove the admissibility of $(\Ch(\cab), \Ch(\a))$ independently of Quillen's d\'evissage theorem (Theorem \ref{quillen}), and therefore deduce Theorem \ref{quillen} from the d\'evissage theorem of single type (Theorem \ref{devissage-1}). More generally, it  would be interesting to know whether the good Waldhausen pair $(\Ch(\cab), \Ch(\a))$ is admissible for general full exact abelian subcategories $\a \subset \cab$, independently of the d\'evissage condition.

\section{Multiple Type D\'evissage}

\noindent \textbf{Assumptions.} We fix the following notation and assumptions throughout this section. Let $\C$ be a good Waldhausen category equipped 
with a cylinder functor which satisfies the cylinder axiom. Let $\A = (\A_i)_{i \geq 1}$ be a collection of Waldhausen subcategories such that for every $i \geq 1$:
\begin{itemize}
\item[(a)] $(\C, \A_i)$ is a replete Waldhausen pair (Definition \ref{replete-def}), 
\item[(b)] The Waldhausen subcategory $\A_i$ is closed under extensions in $\C$, i.e., given a cofiber sequence $A \rightarrowtail X \twoheadrightarrow B$ in $\C$ with $A, B \in \A_i$, then $X \in \A_i$,
\item[(c)] The suspension functor $\Sigma \colon \C \to \C$ sends $\A_i$ to $\A_{i+1}$.
\end{itemize}
We note that the Waldhausen category $\sgl_{\infty} \C_{\A}$ is good by Lemma \ref{additional-props}.
 
\subsection{Admissibility} \label{admissibility-2-subsec}
As in the case of d\'evissage of single type, the purpose of an admissibility assumption on $(\C, \A)$ is to identify the $K$-theory of the Waldhausen category 
$\s^w_{\infty} \C_{\A}$.  In the multiple type case, the relevant admissibility assumption is much stronger, however, at the same time, it is significantly easier to verify in many examples of interest. 

\begin{definition} \label{admissible-2}  We say that $(\C, \A)$ is \emph{admissible} if for every $X_{\bullet, \bullet} \in \s^w_{\infty} \C_{\A}$, we have that $X_{0, k} \in \A_k$ for every $k \geq 1$. 
\end{definition}

This admissibility property is motivated by the following example from \cite{Wa}.

\begin{example} \label{admissible-spherical-objs-example}
Let $\C$ be a Waldhausen category associated with a homology theory in the sense of \cite[1.7]{Wa}, and let $\A_i$ denote the Waldhausen subcategory of spherical objects of dimension $i-1$. Then $(\C, \A = (\A_i)_{i \geq 1})$ is admissible by \cite[Lemma 1.7.4]{Wa}. 
\end{example}

\noindent For each $n \geq 1$, we have an exact functor (cf. Subsection \ref{basics-admissibility}):
$$q^w_n \colon \s^w_n \C_{\A} \to \A_1 \times \cdots \times \A_{n-1}, \ \ X_{\bullet, \bullet} \mapsto (X_{0,1}, \cdots, X_{n-2, n-1}).$$
Assuming that $(\C, \A)$ is admissible, we also have a well-defined exact functor:
$$p_n \colon \s^w_n \C_{\A} \to \A_1 \times \cdots \times \A_{n-1}, \ \ X_{\bullet, \bullet} \mapsto (X_{0,1}, \cdots, X_{0, n-1}).$$
Similarly to Subsection \ref{basics-admissibility}, there are also exact functors $\tau_n \colon \prod_1^{n-1} \A_i \to \s^w_n \C_{\A}$ for $n \geq 2$. In detail,  $\tau_n$ is defined on objects by 
$$\tau_n \colon (A_1, \cdots, A_{n-1}) \mapsto (\ast \rightarrowtail A_1 \rightarrowtail A_2 \vee CA_1 \rightarrowtail \cdots \rightarrowtail A_{n-1} \vee \bigvee_1^{n-2} CA_{i} \rightarrowtail \bigvee_1^{n-1} CA_i).$$
(Note that this functor is well defined because of assumption (c) on $(\C, \A)$.) Moreover, the following diagram of exact functors commutes
$$
\xymatrix{
\s^w_n \C_{\A} \ar[r]^{i^w_{n, \A}} & \s^w_{n+1} \C_{\A} \\
\prod_1^{n-1} \A_i \ar[u]_{\tau_n} \ar[r] & \prod_1^n \A_i \ar[u]_{\tau_{n+1}} 
}
$$ 
where the bottom functor is the canonical inclusion functor, given on objects by $(A_1, \cdots, A_{n-1}) \mapsto (A_1, \cdots, A_{n-1}, \ast)$. As a consequence, we also obtain an exact functor $\tau_{\infty} \colon \underrightarrow{\colim}_n \prod_1^{n-1} \A_i \longrightarrow \s^w_{\infty} \C_{\A}.$

\medskip

The composite functor $p_n \circ \tau_n$ is weakly equivalent to the identity functor.  The composite functor $q^w_n \circ \tau_n$ is given on objects by 
$$(A_1, \cdots, A_{n-1}) \mapsto (A_1, A_2 \vee \Sigma A_1, \cdots, A_{n-1} \vee \Sigma A_{n-2})$$
and it induces a homotopy equivalence in $K$-theory. More specifically, the map in $K$-theory is identified with the homotopy equivalence:
$$ (\pi_1, \pi_2 + \Sigma \circ \pi_1, \cdots, \pi_{n-1} + \Sigma \circ \pi_{n-2}): \prod_1^{n-1} K(\A_i) \to \prod_1^{n-1} K(\A_i)$$
where $\pi_i$ denotes the projection onto the $i$-th factor, the sum corresponds to the loop sum, and $\Sigma \colon K(\A_i) \to K(\A_{i+1})$ denotes here, by a slight abuse of notation, the map that is induced by the suspension functor on $\C$. Each map $K(p_n)$, $K(q^w_n)$ and $K(\tau_n)$ is a homotopy equivalence if one of them is a homotopy equivalence. 

\begin{proposition} \label{admissible2-prop}
Suppose that $(\C, \A)$ be admissible. Then the map 
$$K(p_n) \colon K(\s^w_n \C_{\A}) \to K(\A_1) \times \cdots \times K(\A_{n-1})$$
is a homotopy equivalence.
\end{proposition}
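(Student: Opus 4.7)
The plan is to show that $K(q^w_n)$ is a homotopy equivalence, which suffices because, as recorded in the text preceding the proposition, the three maps $K(p_n)$, $K(q^w_n)$, and $K(\tau_n)$ are simultaneously homotopy equivalences. I would proceed by induction on $n \geq 1$, adapting the strategy of Proposition \ref{criterion}(1) from the single type setting. A subtlety is that the induction must be formulated uniformly over all shifted collections $\A[k] = (\A_{k+i})_{i \geq 1}$, since the shift appears naturally in the inductive step; all hypotheses (a)--(c) and admissibility are preserved under such shifts.

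For the inductive step, I would introduce the Waldhausen category $\s^w_n \C_{\A}^1$ having the same cofibrations as $\s^w_n \C_{\A}$ but with weak equivalences detected by $F_{0,1}$, and apply the Fibration Theorem (Theorem \ref{fibration-thm}) to obtain a homotopy fiber sequence
\[
K(\E_n) \to K(\s^w_n \C_{\A}) \to K(\s^w_n \C_{\A}^1),
\]
where $\E_n \subset \s^w_n \C_{\A}$ is the full Waldhausen subcategory spanned by objects with $X_{0,1}$ weakly trivial. Just as in Proposition \ref{criterion}(1), the shift functor $\rho \colon \E_n \to \s^w_{n-1} \C_{\A[1]}$ sending $(X_{\bullet, \bullet})$ to $(\ast \rightarrowtail X_{1,2} \rightarrowtail \cdots \rightarrowtail X_{1,n})$ admits a section $\iota$ such that $\iota \circ \rho$ is naturally weakly equivalent to the identity, so $K(\E_n) \simeq K(\s^w_{n-1} \C_{\A[1]})$, and this is $\prod_2^{n-1} K(\A_i)$ by the inductive hypothesis applied to $\A[1]$.

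To identify $K(\s^w_n \C_{\A}^1) \simeq K(\A_1)$, I would construct an exact section $s \colon \A_1 \to \s^w_n \C_{\A}^1$ by an iterated cone, sending $A$ to a filtration whose first stage is $A$ and whose later stages are produced by repeatedly applying the cone functor; admissibility together with hypothesis (c) ensures that the successive cofibers land in the prescribed $\A_i$. Then one verifies conditions (App1) and (App2) of the Approximation Theorem (Theorem \ref{approx-thm}) for the retraction $X_{\bullet, \bullet} \mapsto X_{0,1}$: (App1) is immediate, while for (App2), given a morphism $F \colon s(A) \to X_{\bullet, \bullet}$, one would factor $A \rightarrowtail Z \xrightarrow{\sim} X_{0,1}$ in $\A_1$ and extend to the upper stages using the cylinder functor, together with repleteness of each $(\C, \A_i)$ and the admissibility constraint $X_{0,k} \in \A_k$. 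Assembling the two fiber-sequence identifications and checking compatibility with the comparison map to $\prod_1^{n-1} K(\A_i)$ then completes the induction.

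The hard part will be the Approximation Theorem verification for $s$: the challenge is to produce factorizations whose successive cofibers stay in the prescribed $\A_i$'s, which requires careful simultaneous use of repleteness, admissibility, and the cylinder/suspension structure. The base case $n = 1$ also requires a separate argument showing that the subcategory of weakly trivial objects in $\A_1$ has contractible $K$-theory, which follows from admissibility applied via the degenerate extension to $\s^w_{\infty} \C_{\A}$ combined with the stabilisation provided by the suspension action.
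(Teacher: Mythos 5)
The paper does not actually give a proof here: it defers to Waldhausen's Lemma 1.7.3, so your argument can only be measured against that template and against the single-type machinery of Section 4 that you are transplanting. Your outer scaffolding is sound: the fibration-theorem induction, the observation that the induction must run over the shifted collections $\A[k]$, and the identification $K(\E_n)\simeq K(\s^w_{n-1}\C_{\A[1]})$ via $\rho$ and $\iota$ all go through (and hypotheses (a)--(c) and admissibility are indeed inherited by $\A[1]$, via the degeneracy $\s_m\C_{\A[1]}\hookrightarrow\s_{m+1}\C_{\A}$). The base case is also true, but for a more elementary reason than the one you give: in $\s^w_1\C_{\A}=\A_1^w$ every object is weakly trivial, so each $|w\s_m\A_1^w|$ has $\ast$ as an initial object and $K(\A_1^w)\simeq\ast$; no appeal to suspension or to $\s^w_\infty\C_{\A}$ is needed.

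The genuine gap is the step $K(\s^w_n\C_{\A}^1)\simeq K(\A_1)$, which is where all the homotopical content of the proposition sits, and your proposed (App2) verification does not close it. Concretely, given $F\colon s(A)\to X_{\bullet,\bullet}$ and a factorization $A\rightarrowtail Z\xrightarrow{\sim}X_{0,1}$, producing $Q\colon s(Z)\to X_{\bullet,\bullet}$ requires extending the map $C(A)\cup_A Z\to X_{0,2}$ over the cofibration $C(A)\cup_A Z\rightarrowtail C(Z)$ into a target $X_{0,2}$ that is \emph{not} weakly trivial. This is exactly the obstruction that the (WHEP) hypothesis of Subsection \ref{whep} is introduced to kill in the single-type case, and nothing in the Section 6 hypotheses -- repleteness, closure under extensions, $\Sigma\A_i\subseteq\A_{i+1}$, or admissibility -- supplies such an extension. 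Nor can you dodge it by first replacing $X_{\bullet,\bullet}$ with the pushout along $C(A)\cup_A Z\rightarrowtail C(Z)$ (which is legitimate, since only $F_{0,1}$ must be an equivalence in $\s^w_n\C_{\A}^1$): that modification changes the second subquotient by an extension involving $\Sigma(Z/A)$, and $Z/A$ -- the cofiber of a cofibration between objects of $\A_1$ -- need not lie in $\A_1$, so the new object leaves $\s_n\C_{\A}$. The same difficulty defeats a direct (App2) check for $p_n^1$, since enlarging $X_{0,1}$ to $Z$ destroys weak triviality of the total object unless one can cone off the difference. So the crux of the proposition is reduced to an equivalent unproved statement rather than established; you would need either an additional extension-type hypothesis, or the route actually indicated by the paper (Waldhausen's argument for spherical objects, which exploits the admissibility condition $X_{0,k}\in\A_k$ together with the Additivity Theorem in an essential way that your sketch does not reproduce).
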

\begin{proof}
The proof is essentially the same as the proof of \cite[Lemma 1.7.5]{Wa} and will be omitted. 
\end{proof}

\subsection{The d\'evissage condition} \label{devissage-cond-subsec-2} First we define an abstract notion of connectivity in $\C$ with respect to $\A$. This definition is inspired by the \emph{Hypothesis} in \cite[1.7, p. 361]{Wa}.

\begin{definition}
A morphism $f \colon X \to Y$ in $\C$ is \emph{k--connected (with respect to $\A$)}, $k \geq -1$, if there is a weak equivalence $g \colon Y \xrightarrow{\sim} Y'$ and a factorization of $g f \colon X \to Y'$,
$$X = X_0 \rightarrowtail X_1 \rightarrowtail X_2 \rightarrowtail \cdots \rightarrowtail X_m \xrightarrow{\sim} Y',$$
such that $X_i/X_{i-1} \in \A_{k + i + 1}$ for every $i \geq 1$. 
\end{definition}

\begin{definition}  \label{cancellation-def}
We say that $(\C, \A)$ has the \emph{cancellation property} if the following holds: for any $k \geq -1$ and any composable morphisms $X \xrightarrow{f} Y \xrightarrow{g} Z$ in $\C$ such that $f$ and $gf$ are $k$-connected, then $g$ is also $k$-connected. 
\end{definition}

\begin{example} \label{admissibility-spherical-objes-example}
The meaning of the cancellation property may be unclear directly from the abstract definition of $\A$-connectivity, but it can be easily verified in many examples of interest in which $\A$-connectivity corresponds to a standard notion of connectivity. For example, the cancellation property is obviously 
satisfied in the context of Waldhausen's theorem on spherical objects in \cite[1.7, pp. 360--361]{Wa}. 
\end{example}

As in the case of d\'evissage of single type, the main condition on $(\C, \A)$ that we are interested in is the following analogue of Definition 
\ref{devissage-cond-def}. 

\begin{definition} \label{devissage-cond-def2}
We say that $(\C, \A)$ satisfies the \emph{d\'{e}vissage condition} if for every morphism $f: X \rightarrow Y$ in $\C$, there is a weak equivalence 
$g: Y \stackrel{\sim}{\rightarrow} Y'$ such that the composition $gf: X \to Y'$ 
admits a factorization
  $$X = X_0 \rightarrowtail X_1 \rightarrowtail \cdots \rightarrowtail X_{m-1} \rightarrowtail X_m \stackrel{\sim}{\to} Y'$$
where $X_i / X_{i-1} \in \A_i$ for every $i \geq 1$.
\end{definition}

\begin{remark}
The d\'evissage condition for $(\C, \A)$ exactly says that every morphism in $\C$ is (--1)-connected with respect to $\A$. We emphasize that the factorizations in Definition \ref{devissage-cond-def2} are not assumed to be functorial or invariant in any sense. However, as we will see in 
Remark \ref{uniqueness-filtrations-2}, this condition is sufficient in order to conclude that the functor $ev_{\infty}$ (defined below) is an equivalence of homotopy theories. 
\end{remark}

Similarly to the single type case, there are exact functors $ev_n: \sgl_n \C_{\A} \to \C$ for $n \geq 1$, given by $(X_{\bullet, \bullet}) \mapsto X_{0,n},$
which are compatible with respect to stabilization along the inclusion functors $\widehat{i}_{n, \A}$. Thus, we obtain an induced exact functor 
$$ev_{\infty}: \sgl_{\infty} \C_{\A} \to \C$$
which sends a staircase diagram representing a bounded $\A$-filtration of an object $X_{0,n}$, for $n$ large enough, to the object $X_{0,n}$ itself. As in the single type case, the main function of the d\'evissage condition will be to ensure that the functor $ev_{\infty}$ induces a homotopy equivalence in $K$-theory. 

\begin{proposition} \label{devissage-cond-prop2}
Suppose that $(\C, \A)$ satisfies the d\'evissage condition and has the cancellation property. Then the functor $ev_{\infty} \colon \sgl_{\infty} \C_{\A} \to \C$ induces a homotopy equivalence $K(\sgl_{\infty} \C_{\A}) \stackrel{\simeq}{\to} K(\C)$.
\end{proposition}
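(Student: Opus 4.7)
The plan is to apply the Approximation Theorem (Theorem \ref{approx-thm}) to the exact functor $ev_{\infty} \colon \sgl_{\infty} \C_{\A} \to \C$, following the template of Proposition \ref{devissage-cond-prop}. Property (App1) holds by definition, since the weak equivalences in $\sgl_{\infty} \C_{\A}$ are precisely the eventual weak equivalences on the underlying total object.

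For (App2), given $X_{\bullet,\bullet} \in \sgl_{\infty} \C_{\A}$ with total object $X_{0,n}$ and a morphism $f \colon X_{0,n} \to Y$ in $\C$, the natural candidate is to take $Z_{\bullet,\bullet}$ as the extension of $X_{\bullet,\bullet}$ along a factorization
$$X_{0,n} \rightarrowtail Z_{0,n+1} \rightarrowtail \cdots \rightarrowtail Z_{0,N} \xrightarrow{\sim} Y'$$
with $Z_{0,k}/Z_{0,k-1} \in \A_k$ for $n < k \leq N$; one then sets $f'$ to be the identity on the first $n$ stages and $q$ to be the closing weak equivalence, so the required square commutes tautologically. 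This reduces (App2) to the key assertion that $f$ is $(n-1)$-connected in the sense of Subsection \ref{devissage-cond-subsec-2}, i.e., that after composition with a suitable weak equivalence the map admits a factorization whose $i$-th cofiber lies in $\A_{n+i}$.

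To establish the assertion I would proceed by induction on $n$. The base case $n=0$ is exactly the d\'evissage condition, which asserts that every morphism is $(-1)$-connected. For the inductive step, I would compare $f$ with the composition $X_{0,n-1} \rightarrowtail X_{0,n} \xrightarrow{f} Y$: the cofibration $X_{0,n-1} \rightarrowtail X_{0,n}$ has cofiber $X_{n-1,n} \in \A_n$ and is therefore $(n-2)$-connected via its single-step factorization, while the inductive hypothesis applied to the truncation of $X_{\bullet,\bullet}$ at depth $n-1$ controls the connectivity of the composite $X_{0,n-1} \to Y$. The cancellation property then transmits this connectivity to $f$; the assumptions that each $\A_i$ is closed under extensions and that $\Sigma \colon \A_i \to \A_{i+1}$ are used to repackage the d\'evissage factorization of $X_{0,n-1} \to Y$ so that its initial low-index cofibers are absorbed into existing stages (via closure) or shifted to the correct index (via suspension), producing a factorization for which cancellation genuinely advances the connectivity index.

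The main obstacle will be precisely this inductive promotion: a bare application of the cancellation property preserves the connectivity level rather than raising it, so the argument must carefully intertwine cancellation with assumptions (b) and (c) of the section in order to convert the $(n-2)$-connectivity supplied by the inductive hypothesis into $(n-1)$-connectivity of $f$. Once the key assertion is established, the construction $(Z_{\bullet,\bullet}, f', q)$ described above verifies (App2), and Theorem \ref{approx-thm} delivers the homotopy equivalence $K(\sgl_{\infty} \C_{\A}) \xrightarrow{\simeq} K(\C)$.
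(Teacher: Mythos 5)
There is a genuine gap: your reduction of (App2) to the assertion that $f \colon X_{0,n} \to Y$ is $(n-1)$-connected is a reduction to a false statement. Already in Waldhausen's spherical-objects setting (Example \ref{admissibility-spherical-objes-example}), take $Y$ weakly trivial and $X_{0,n}$ a nontrivial object filtered with cofibers in $\A_1, \dots, \A_n$: the factorization of $X_{0,n} \to \ast$ must begin by attaching cells whose cofibers lie in low-index subcategories $\A_2, \A_3, \dots$, not in $\A_{n+1}$, so $f$ is at best $0$-connected rather than $(n-1)$-connected. No amount of intertwining cancellation with assumptions (b) and (c) can repair this, because the obstruction is real: the d\'evissage condition controls where a factorization of $gf$ \emph{starts} ($\A_1$), not where a factorization compatible with the given length-$n$ filtration of the source would have to start ($\A_{n+1}$). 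You correctly sense that cancellation cannot raise the connectivity index, but the conclusion to draw is that the target statement must be weakened, not that the promotion can be engineered.

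The fix — and this is what the paper does — is to exploit the full flexibility of (App2): the morphism $f' \colon X_{\bullet,\bullet} \to Z_{\bullet,\bullet}$ need not be the identity on the first $n$ stages. One inducts on $n$, applying the inductive hypothesis to the composite $X_{0,n-1} \rightarrowtail X_{0,n} \xrightarrow{f} Y$ to produce a \emph{new} filtered object $X'_{\bullet,\bullet}$ (with nontrivial comparison maps $X_{0,i} \to X'_{0,i}$) and a weak equivalence $X'_{0,k} \xrightarrow{\sim} Y'$. One then forms the pushout $X'_{0,n-1} \cup_{X_{0,n-1}} X_{0,n}$; the map from $X'_{0,n-1}$ into this pushout is $(n-2)$-connected (its cofiber is $X_{n-1,n} \in \A_n$), the map $X'_{0,n-1} \to Y'$ is $(n-2)$-connected by construction, and cancellation gives that the induced map $u$ from the pushout to $Y'$ is $(n-2)$-connected — no promotion needed. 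The resulting factorization of $u$ has first cofiber in $\A_n$, and closure under extensions (assumption (b)) is used exactly once, to show that $Z_1/X'_{0,n-1} \in \A_n$ via the cofiber sequence $X_{n-1,n} \rightarrowtail Z_1/X'_{0,n-1} \twoheadrightarrow Z_1/(X'_{0,n-1}\cup_{X_{0,n-1}} X_{0,n})$, so that the new stage $Z_1$ can replace $X_{0,n}$ as the $n$-th filtration step. Your outline has the right ingredients (induction on $n$, the $(n-2)$-connected cofibration $X_{0,n-1}\rightarrowtail X_{0,n}$, cancellation, extension-closure) but assembles them toward the wrong intermediate claim.
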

\begin{proof}
The proof is similar to \cite[Lemma 1.7.2]{Wa}. We show that the exact functor $ev_{\infty} \colon \sgl_{\infty} \C_{\A} \to \C$ satisfies the assumptions of the Approximation Theorem (Theorem \ref{approx-thm}). (App1) holds by definition. For (App2), we consider a morphism $ev_{\infty}(X_{\bullet, \bullet}) = X_{0,n} \to Y$ where $X_{\bullet, \bullet} \in \sgl_n \C_{\A}$ and proceed by induction on $n \geq 0$ (where $\sgl_0 \C_{\A} = \s_0 \C = \{\ast\}$). (App2) holds for $n = 0$ by the d\'evissage condition applied to the morphism $\ast = X_{0,0} \to Y$. Suppose by induction 
that (App2) holds for morphisms $ev_{\infty}(X_{\bullet, \bullet}) \to Y$ where $X_{\bullet, \bullet} \in \sgl_{n-1} \C_{\A}$. Now consider a morphism as follows (we ignore the cofibers for simplicity):
$$(\ast \rightarrowtail X_{0,1} \rightarrowtail \cdots \rightarrowtail X_{0,n}) \in \sgl_n \C_{\A}, \ \ f \colon ev_{\infty}(X_{\bullet, \bullet}) = X_{0,n} \to Y.$$  
By induction, there is an object $X'_{\bullet, \bullet} \in \sgl_k \C_{\A}$ and a morphism in $\sgl_k \C_{\A}$, for some $k \geq n-1$,  
$$
\xymatrix{
\ast \ar@{>->}[r] \ar@{=}[d] & X_{0,1} \ar@{>->}[r] \ar[d] & \cdots \ar@{>->}[r] & X_{0, n-1} \ar[d] \ar@{=}[r] & \cdots \ar[d] \ar@{=}[r] & X_{0, n-1} \ar[d] \\
 \ast \ar@{>->}[r] & X'_{0,1} \ar@{>->}[r]  & \cdots \ar@{>->}[r] & X'_{0, n-1} \ar@{>->}[r] & \cdots \ar@{>->}[r] & X'_{0,k} 
}
$$
together with weak equivalences $g \colon Y \xrightarrow{\sim} Y'$ and $f' \colon X'_{0, k} \xrightarrow{\sim} Y'$ such that the following square in $\C$ commutes 
\begin{equation} \label{square-1}
\xymatrix{
X_{0,n-1} \ar@{>->}[r] \ar[d] & X_{0,n} \ar[r]^f & Y \ar[dd]_{\sim}^g \\
X'_{0, n-1} \ar[d] & \\
X'_{0,k} \ar[rr]^{\sim}_{f'} && Y'.
}
\end{equation}
Then, by definition, the composition $X'_{0,\ell} \to X'_{0, k} \xrightarrow{\sim} Y'$, $1 \leq \ell  \leq k$, is  $(\ell-1)$-connected. The canonical morphism 
$$X'_{0, n-1} \to X'_{0,n-1} \cup_{X_{0, n-1}} X_{0,n}$$
is $(n-2)$-connected because its cofiber is in $\A_n$. It follows by the cancellation property that the morphism induced by \eqref{square-1}: 
$$u \colon X'_{0,n-1} \cup_{X_{0, n-1}} X_{0,n} \to Y'$$
is also $(n-2)$-connected. Therefore there is a weak equivalence $g' \colon Y' \xrightarrow{\sim} Y''$ and a factorization of $g'u$,
$$X'_{0,n-1} \cup_{X_{0, n-1}} X_{0,n} = Z_0 \rightarrowtail Z_1 \rightarrowtail \cdots \rightarrowtail Z_m \stackrel{\sim}{\to} Y'',$$
where $Z_{i}/Z_{i-1} \in \A_{n + i -1}$ for every $i \geq 1$.  Note that there is a cofiber sequence
$$X_{n-1, n} = X_{0,n}/X_{0, n-1} \rightarrowtail Z_1/X'_{0,n-1} \twoheadrightarrow Z_1/(X'_{0,n-1} \cup_{X_{0,n-1}} X_{0,n}).$$
Since $\A_n$ is closed under extensions in $\C$, it follows that $Z_1/X'_{0,n-1} \in \A_n$. Thus, we obtain a morphism in $\sgl_{\infty} \C_{\A}$ represented by the diagram:
$$
\xymatrix{
\ast \ar@{>->}[r] \ar@{=}[d] & X_{0,1} \ar@{>->}[r] \ar[d] & \cdots \ar@{>->}[r] & X_{0, n-1} \ar[d] \ar@{>->}[r] & X_{0,n} \ar[d] \ar@{=}[r] & \cdots \ar@{=}[r] & X_{0,n} \ar[d] \\
\ast \ar@{>->}[r] & X'_{0,1} \ar@{>->}[r]  & \cdots \ar@{>->}[r] & X'_{0, n-1} \ar@{>->}[r] & Z_1 \ar@{>->}[r] & \cdots \ar@{>->}[r] & Z_m 
}
$$
together with a commutative square in $\C$:
$$
\xymatrix{
X_{0,n} \ar[r]^f \ar[dd] & Y \ar[d]_{\sim}^g \\
& Y' \ar[d]_{\sim}^{g'} \\
Z_m \ar[r]^{\sim} & Y''.
}
$$ 
This completes the inductive proof that $ev_{\infty}$ satisfies (App2). Then the result follows as an application of Theorem \ref{approx-thm}. 
\end{proof}

\begin{remark} \label{uniqueness-filtrations-2}
As a consequence of the results stated in Remark \ref{remark-approx}, the proof of Proposition \ref{devissage-cond-prop2} also shows that the exact functor 
$ev_{\infty} \colon \sgl_{\infty} \C_{\A} \to \C$ induces an equivalence between the underlying homotopy theories. 
\end{remark}

\subsection{The main theorem} \label{main-thm-2}
We recall that the suspension functor $\Sigma \colon \C \to \C$ restricts to exact functors $\Sigma \colon \A_i \to \A_{i+1}$, $i \geq 1$, by assumption. Thus the collection of maps $K(\A_i) \to K(\C)$, $i \geq 1$, induces canonically a map:
\begin{equation} \label{multiple-type-map}
\underrightarrow{\hocolim}_{(\Sigma)} K(\A_i) \longrightarrow \underrightarrow{\hocolim}_{(\Sigma)} K(\C) \simeq K(\C)
\end{equation}
where the last homotopy equivalence holds because $\Sigma$ induces a homotopy equivalence in $K$-theory. A d\'evissage theorem of multiple type is a statement that for certain $(\C, \A)$ for which a d\'evissage--type condition is satisfied, the map \eqref{multiple-type-map} is a homotopy equivalence. The following result is an abstract version and 
a generalization of Waldhausen's theorem on spherical objects in \cite[1.7]{Wa}. 

\begin{theorem}[Multiple Type D\'evissage] \label{devissage-multiple-type}
Let $(\C, \A)$ be admissible and suppose that it has the cancellation property and satisfies the d\'evissage condition. Then the canonical map 
$$\underrightarrow{\hocolim}_{(\Sigma)} K(\A_i) \stackrel{\simeq}{\longrightarrow} \underrightarrow{\hocolim}_{(\Sigma)} K(\C) \simeq K(\C)$$
is a homotopy equivalence. 
\end{theorem}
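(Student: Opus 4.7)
The plan is to mimic the proof of Theorem \ref{devissage-1}, replacing the coproduct-based bottom-right identification there with a mapping-telescope identification appropriate to the multiple-type setting. The fundamental homotopy fiber sequence and the roles of the admissibility and dévissage assumptions carry over directly; only the identification of the right-hand cofiber needs to be worked out afresh.

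First I would apply Proposition \ref{fiber seq} at $n = \infty$ (the needed goodness of $\sgl_\infty \C_{\A}$ is given by Lemma \ref{additional-props}) to obtain the homotopy fiber sequence
$$K(\s^w_\infty \C_{\A}) \to K(\s_\infty \C_{\A}) \to K(\sgl_\infty \C_{\A}).$$
Three of the four relevant corners are immediately available: Proposition \ref{ho-eq-1} identifies $K(\s_\infty \C_{\A})$ with $\colim_n \prod_1^n K(\A_i)$; Proposition \ref{admissible2-prop}, together with the compatible family $\{\tau_n\}$ of Subsection \ref{admissibility-2-subsec}, identifies $K(\s^w_\infty \C_{\A})$ with $\colim_n \prod_1^{n-1} K(\A_i)$; and Proposition \ref{devissage-cond-prop2} identifies $K(\sgl_\infty \C_{\A}) \simeq K(\C)$ via $ev_\infty$. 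Under the $\tau_n$-identification, the inclusion $\s^w_n \C_{\A} \hookrightarrow \s_n \C_{\A}$ induces on $K$-theory the shift-twisted map
$$\phi_n(x_1, \ldots, x_{n-1}) = (x_1,\ x_2 + \Sigma x_1,\ \ldots,\ x_{n-1} + \Sigma x_{n-2},\ \Sigma x_{n-1}),$$
as one reads off by post-composing $\tau_n$ with the successive-cofibers functor of Proposition \ref{ho-eq-1}.

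The main new input is the identification of the cofiber of $\phi_n$. I would exhibit the retraction
$$r_n \colon \prod_1^n K(\A_i) \to K(\A_n), \quad (a_1, \ldots, a_n) \mapsto \sum_{k=1}^n (-1)^{n-k} \Sigma^{n-k} a_k,$$
and verify by a direct telescoping computation that $(\phi_n, s_n) \colon \prod_1^{n-1} K(\A_i) \times K(\A_n) \xrightarrow{\simeq} \prod_1^n K(\A_i)$ is a homotopy equivalence, where $s_n \colon a \mapsto (0, \ldots, 0, a)$ is the obvious section of $r_n$. This produces the cofiber sequence $\prod_1^{n-1} K(\A_i) \xrightarrow{\phi_n} \prod_1^n K(\A_i) \xrightarrow{r_n} K(\A_n)$. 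Tracking the zero-padding stabilization, the induced transition on the cofiber side is $-\Sigma \colon K(\A_n) \to K(\A_{n+1})$, so the colimit of the cofibers is $\hocolim_{(-\Sigma)} K(\A_i)$, canonically identified with $\hocolim_{(\Sigma)} K(\A_i)$ by the stagewise sign twist $(-1)^n$.

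Putting everything together yields a homotopy equivalence $\hocolim_{(\Sigma)} K(\A_i) \xrightarrow{\simeq} K(\C)$. To conclude the theorem, I would verify that this equivalence is the canonical map of the statement: a class $a_n \in K(\A_n)$ lifts along $r_n$ to $(0, \ldots, 0, a_n) \in \prod_1^n K(\A_i)$, which corresponds via the homotopy inverse of $q_n$ to $(\ast \rightarrowtail \cdots \rightarrowtail \ast \rightarrowtail A_n) \in \sgl_n \C_{\A}$, whose image under $ev_n$ is $A_n$ itself, so that the $n$-th stage of the telescope maps to $K(\C)$ by the inclusion induced by $\A_n \hookrightarrow \C$. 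The main obstacle is the sign bookkeeping: one must match the stagewise sign twist $(-1)^n$ with the homotopy equivalence $\hocolim_{(\Sigma)} K(\C) \simeq K(\C)$ (in which $\Sigma$ acts as $-\mathrm{id}$ on $K(\C)$), so that the assembled equivalence really agrees with the canonical map defined before the theorem rather than a sign-twisted variant of it.
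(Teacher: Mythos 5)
Your proposal is correct, and all three pillars you invoke (Propositions \ref{ho-eq-1}, \ref{admissible2-prop} and \ref{devissage-cond-prop2}, together with the fiber sequence of Proposition \ref{fiber seq}) are exactly the ones the paper uses; but the way you identify the third term of the fiber sequence is genuinely different from the paper's route. The paper does \emph{not} compute the cofiber of the twisted map $\phi_n$. Instead it replaces the middle identification by a functor $\tau'_n$ so that the left square commutes with the plain zero-padding inclusion $(A_1,\dots,A_{n-1})\mapsto(A_1,\dots,A_{n-1},\ast)$ at the bottom, takes the trivially split bottom fiber sequence $\prod_1^{n-1}K(\A_i)\to\prod_1^nK(\A_i)\xrightarrow{\pi_n}K(\A_n)$, and then restabilizes the whole diagram along suspension functors $\sigma\colon \s_n\C_{\A}\to\s_{n+1}\C_{\A}$ and $\sigma'$ rather than along zero-padding; the telescope $\underrightarrow{\hocolim}_{(\Sigma)}K(\A_n)$ then appears from the bottom-right corner, at the cost of an extra Additivity argument showing $K(\sigma)$ agrees with $K(\widehat{i}_{n,\A})$ up to sign on $K(\sgl_n\C_{\A})$ and a homotopy pullback verification for the left square. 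Your approach keeps the zero-padding stabilization throughout and instead produces the explicit alternating-sum retraction $r_n=\sum_k(-1)^{n-k}\Sigma^{n-k}\pi_k$ (well defined thanks to assumption (c)), so that the telescope with transition $-\Sigma$ emerges as the colimit of horizontal cofibers; this is precisely the multiple-type generalization of the coproduct map $\vee$ used in the paper's own single-type proof of Theorem \ref{devissage-1}, where $\vee\circ\phi_n\simeq 0$ because $\Sigma\simeq-\mathrm{id}$ on $K(\A)$. What your route buys is a more self-contained, computational argument that parallels the single-type case; what the paper's route buys is that the suspension telescope appears for structural reasons without sign bookkeeping in the retraction. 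Two small points to make explicit when writing it up: passing the split fiber sequences through the sequential colimit and comparing with Proposition \ref{fiber seq} requires a map on the third terms (induced by $\A_n\hookrightarrow\sgl_n\C_{\A}$, $A\mapsto(\ast=\cdots=\ast\rightarrowtail A)$) together with $\pi_0$-surjectivity of $K(\s_\infty\C_{\A})\to K(\sgl_\infty\C_{\A})$ for the five lemma; and, as you note, the residual stagewise signs only change the comparison map by compatible self-equivalences, so they do not affect the conclusion that the canonical map is an equivalence.
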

\begin{proof} We start by considering the properties of the following diagram: 
\begin{equation} \label{diagram-2}
\xymatrix{
K(\s^w_n \C_{\A}) \ar[r]  & K(\s_n \C_{\A}) \ar[r] & K(\sgl_n \C_{\A}) \\
\prod_{1}^{n-1} K(\A_i) \ar[r]  \ar[u]_{K(\tau_n)}^{\simeq} & \prod_{1}^n K(\A_i) \ar[r]^(.6){\pi_n} \ar[u]_{K(\tau'_n)}^{\simeq} & K(\A_n) \ar[u]
}
\end{equation}
The left vertical map was defined in Subsection \ref{admissibility-2-subsec}. It is a homotopy equivalence by Proposition \ref{admissible2-prop}. 
The middle map is induced by an exact functor $\tau'_n \colon \prod_1^n \A_i \to \s_n \C_{\A}$, a variation of the functor $\tau_n$, given on objects by
$$(A_1, A_2, \cdots, A_n) \mapsto (\ast \rightarrowtail A_1 \rightarrowtail A_2 \vee CA_1 \rightarrowtail \cdots \rightarrowtail A_n \vee \bigvee_1^{n-1} CA_i).$$
Note that the composition of $\tau'_n$ with the functor $q_n \colon \s_n \C_{\A} \to \prod_1^n \A_i$ (see Proposition \ref{ho-eq-1}) is the exact functor
$$\prod_1^n \A_i \to \prod_1^n \A_i, \ \ (A_1, \cdots, A_n) \mapsto (A_1, A_2 \vee \Sigma A_1, \cdots, A_n \vee \Sigma A_{n-1}).$$
This last functor induces a homotopy equivalence in $K$-theory. Since $K(q_n)$ is also a homotopy equivalence by Proposition \ref{ho-eq-1}, it follows that the middle vertical map $K(\tau'_n)$ in \eqref{diagram-2} is also a homotopy equivalence.

The left bottom map is induced by the inclusion functor 
$$(A_1, \cdots, A_{n-1}) \mapsto (A_1, A_2, \cdots, A_{n-1}, \ast).$$
Then the left square in \eqref{diagram-2} commutes by direct inspection. (On the other hand, note that the functors $\{\tau'_n\}_{n \geq 1}$ are not natural with respect to  the functors $\{i_{n, \A}\}_{n \geq 1}$ and the inclusion functors $\prod_{1}^n \A_i \to \prod_{1}^{n+1} \A_i$ as defined above; however, we can easily identify the appropriate map $\prod_{1}^n K(\A_i) \to \prod_1^{n+1} K(\A_i)$, so that the maps $\{K(\tau'_n)\}_{n \geq 1}$ become compatible for all $n$.)
 
The bottom map $\pi_n \colon \prod_1^n K(\A_i) \to K(\A_n)$ is the projection onto the last factor. The right vertical map is induced by the exact inclusion functor $\A_n \to \sgl_n \C_{\A}$ which is given on objects by $$A \mapsto (\ast =  \cdots  = \ast \rightarrowtail A).$$ The right square in 
\eqref{diagram-2} is homotopy commutative, since the underlying exact functors are naturally weakly equivalent. 

Note that the bottom row in \eqref{diagram-2} defines a homotopy fiber sequence.

\medskip

Let $\sigma \colon \s_n \C_{\A} \to \s_{n+1} \C_{\A}$ be the exact functor that is given on objects by 
$$(\ast \rightarrowtail X_{0,1} \rightarrowtail \cdots \rightarrowtail X_{0,n}) \mapsto (\ast  = \ast \rightarrowtail \Sigma X_{0,1} \rightarrowtail \cdots \rightarrowtail \Sigma X_{0,n})$$
and let $\sigma' \colon \prod_1^n \A_i \to \prod_1^{n+1} \A_i$ be the exact functor given by $(A_1, \cdots, A_n) \mapsto (\ast, \Sigma A_1, \cdots, \Sigma A_n)$. The maps in Diagram \eqref{diagram-2} are compatible with the functors $\sigma$ and $\sigma'$. Passing in \eqref{diagram-2} to the homotopy colimits along these stabilization functors as $n \to \infty$, we obtain a homotopy commutative diagram: 
$$
\xymatrix{
\underrightarrow{\hocolim}_{(\sigma)} K(\s^w_n \C_{\A}) \ar[r]  & \underrightarrow{\hocolim}_{(\sigma)} K(\s_n \C_{\A}) \ar[r] & \underrightarrow{\hocolim}_{(\sigma)} K(\sgl_n \C_{\A}) \\
\underrightarrow{\hocolim}_{(\sigma')} \prod_{1}^{n-1} K(\A_i) \ar[r]  \ar[u]^{\simeq} & \underrightarrow{\hocolim}_{(\sigma')} \prod_{1}^n K(\A_i) \ar[r] \ar[u]^{\simeq} & \underrightarrow{\hocolim}_{(\Sigma)} K(\A_n). \ar[u]
}
$$
Note that the bottom row defines a homotopy fiber sequence. An application of the Additivity Theorem (Theorem \ref{addit-thm-2}) shows that the map 
$$K(\sigma) \colon K(\sgl_n \C_{\A}) \to K(\sgl_{n+1} \C_{\A})$$ 
agrees 
with the stabilization map $K(\widehat{i}_{n, \A})$ up to sign -- this uses the definition of the suspension functor in $\sgl_{\infty} \C_{\A}$, see Lemma \ref{additional-props}. 

We claim that the top row in the last diagram also defines a homotopy fiber sequence. To see this, consider the canonical map from the top row of the diagram to the respective sequence of maps for $n = \infty$:
$$
\xymatrix{
\underrightarrow{\hocolim}_{(\sigma)} K(\s^w_n \C_{\A}) \ar[r] \ar[d] & \underrightarrow{\hocolim}_{(\sigma)} K(\s_n \C_{\A}) \ar[r] \ar[d] & \underrightarrow{\hocolim}_{(\sigma)} K(\sgl_n \C_{\A}) \ar[d]^{\simeq} \\
\underrightarrow{\hocolim}_{(\Sigma)} K(\s^w_{\infty} \C_{\A}) \ar[r] & \underrightarrow{\hocolim}_{(\Sigma)} K(\s_{\infty} \C_{\A}) \ar[r] & \underrightarrow{\hocolim}_{(\Sigma)} K(\sgl_{\infty} \C_{\A}). 
}
$$
Here the functor $\Sigma$ denotes the exact functor which defines the suspension functor on $\sgl_{\infty} \C_{\A}$ and restricts to the functor $\sigma$ as defined above (cf. the proof of Lemma \ref{additional-props}). The previous observations about $K(\sigma)$ show that the right vertical map is a homotopy equivalence (as indicated in the diagram). The bottom row is a homotopy 
fiber sequence (of infinite loop spaces) by Proposition \ref{fiber seq}. In addition, it can be verified that the left square is a homotopy pullback using the identifications in Diagram \eqref{diagram-2} to determine and identify the horizontal (co)fibers. As a consequence, the top row of the diagram is also a homotopy fiber sequence. 

\smallskip

Returning now to the previous diagram of homotopy fiber sequences above, it follows that the right vertical map,
$$\underrightarrow{\hocolim}_{(\Sigma)} K(\A_n) \to  \underrightarrow{\hocolim}_{(\sigma)} K(\sgl_n \C_{\A}),$$
is a homotopy equivalence. Next we consider the commutative diagrams
$$
\xymatrix{
K(\A_n) \ar[d]^{\Sigma} \ar[r] &  K(\sgl_n \C_{\A}) \ar[d]^{K(\sigma)} \ar[rr]^{K(ev_n)} && K(\C) \ar[d]^{\Sigma} \\
K(\A_{n+1}) \ar[r] & K(\sgl_{n+1} \C_{\A}) \ar[rr]^(.6){K(ev_{n+1})} && K(\C)  
}
$$
and passing to the homotopy colimits as $n \to \infty$, we obtain the following maps:
\begin{equation} \label{comp-map}
\underrightarrow{\hocolim}_{(\Sigma)} K(\A_n) \xrightarrow{\simeq} \underrightarrow{\hocolim}_{(\sigma)} K(\sgl_n \C_{\A}) \to \underrightarrow{\hocolim}_{(\Sigma)} K(\C) \simeq K(\C)
\end{equation}
The last map $\underrightarrow{\hocolim}_{(\sigma)} K(\sgl_n \C_{\A}) \to K(\C)$ can be identified with $K(ev_{\infty})$ and therefore it is a homotopy equivalence by Proposition \ref{devissage-cond-prop2}. Then the composite map \eqref{comp-map} is a homotopy equivalence, as required. 
\end{proof}

\begin{remark}
Note that the cancellation property and the d\'evissage condition were needed in the proof of Theorem \ref{devissage-multiple-type} exactly in order to conclude that the functor $ev_{\infty}$ induces a $K$-equivalence (Proposition \ref{devissage-cond-prop2}).
\end{remark}

\begin{example}(Waldhausen's theorem on spherical objects \cite{Wa}) \label{Waldhausen-theorem}
Let $(\C, \A)$ be as in Example \ref{admissible-spherical-objs-example}. Under the assumption of \cite[Hypothesis, p. 361]{Wa}, $(\C, \A)$ has the cancellation property and satisfies the d\'evissage condition. Thus we recover Waldhausen's theorem on spherical objects \cite[Thorem 1.7.1]{Wa} 
as a special case of Theorem \ref{devissage-multiple-type}. 
\end{example}

\begin{example}($\mathbb{Z}$-grading and the Gillet--Waldhausen theorem) \label{Gillet-Waldhausen}
Some important examples of d\'evissage--type theorems of multiple type in the stable context involve collections of Waldhausen subcategories $\A = (\A_i)$ for $i \in \mathbb{Z}$ (e.g., the Gillet--Waldhausen theorem \cite{TT} or Example \ref{theorem_of_the_heart} below). 

In these cases, $\A_i$ is typically the full Waldhausen subcategory of objects which are concentrated in degree $i$ (appropriately interpreted), and the maps $$\Sigma \colon K(\A_i) \xrightarrow{\simeq} K(\A_{i+1})$$ are homotopy equivalences. Moreover, the properties of $(\C, \A = (\A_i)_{i \in \mathbb{Z}})$ will typically imply that the assumptions of Theorem \ref{devissage-multiple-type} are satisfied by the Waldhausen category $\C_{\geq i}$, which consists of objects in $\C$ concentrated in degrees $\geq i$,  together with the collection of subcategories $\A_{\geq i} = (\A_j)_{j \geq i}$. 

\smallskip

For example, this happens in the context of the Gillet--Waldhausen theorem \cite[1.11.7]{TT}. In this case, $\C = \Ch(\mathcal{E})$ is the Waldhausen category of bounded chain complexes in an exact category $\mathcal{E}$. We assume that $\mathcal{E}$ arises as a full additive subcategory of an abelian category $\a$ which is closed under extensions and kernels of epimorphisms (cf. \cite[1.11.3--1.11.7]{TT}). By the Gabriel--Quillen embedding theorem, this is satisfied by every exact category which is weakly idempotent complete (see \cite[A.7.1 and A.7.16(b)]{TT}). Thus, we may make this assumption on $\mathcal{E}$ without loss of generality, since we may pass to the idempotent completion and use cofinality (see \cite[A.9.1]{TT}, \cite{Ci2}).  Then $\A_i$ is defined to be the full Waldhausen subcategory of bounded chain complexes whose homology (computed in $\Ch(\a)$) is in $\mathcal{E}$ and concentrated in degree $i$. Note that the inclusion $\mathcal{E} \hookrightarrow \A_i$ induces a $K$-equivalence $K(\mathcal{E}) \stackrel{\simeq}{\to} K(\A_i)$ where the homotopy inverse is induced by the exact functor $H_i \colon 
\A_i \to \mathcal{E}$. Moreover, if $\C_{\geq i} \subset \C$ denotes the Waldhausen subcategory of bounded chain complexes whose 
homology is concentrated in degrees $\geq i$, then the inclusion $\Ch(\mathcal{E})_{\geq i} \to \C_{\geq i}$ induces a $K$-equivalence. 

\smallskip

In these cases, after applying Theorem \ref{devissage-multiple-type},  we obtain homotopy equivalences $K(\A_i) \simeq K(\C_{\geq i})$ for any $i \in \mathbb{Z}$. Moreover, the exact inclusion functors $$\C_{\geq i+1} \to \C_{\geq i}, \ \ i \in \mathbb{Z},$$ will induce homotopy equivalences in $K$-theory by the Additivity Theorem -- this inclusion functor will typically be identified with an equivalence of homotopy theories $\C_{\geq i+1} \simeq \C_{\geq i}$ followed by the suspension in $\C_{\geq i}$. In this way we obtain homotopy equivalences: $K(\A_i) \simeq K(\C_{\geq i}) \simeq \mathrm{hocolim}_{i \in \mathbb{Z}} K(\C_{\geq i}) \simeq K(\C)$. 
 \end{example}
 
\begin{example}(`Theorem of the heart' for weight structures) \label{theorem_of_the_heart}
The setting of Theorem \ref{devissage-multiple-type} arises in the stable context naturally from a weight structure. Thus, Theorem \ref{devissage-multiple-type} can be used to conclude the theorem of the heart for weight structures in a general stable $\infty$-category that was shown in \cite{Fo}.  To see this, let $\C$ be a stable $\infty$-category with a bounded weight structure -- we will omit the details concerning the passage between good Waldhausen categories and their associated $\infty$-categories (see, for example, \cite[7.2]{BGT}). We denote by $\A_i$, $i \in \mathbb{Z}$, the full subcategory which consists of those objects concentrated in degree $i$ with respect to the weight structure. Then the assumptions of Theorem \ref{devissage-multiple-type} are basic consequences of the properties of weight structures. More specifically, the d\'evissage condition follows from the weight decomposition, by induction and using the fact that the weight structure is bounded (see \cite[Proposition 3.31]{Fo}). The cancellation property is essentially obvious in this context (cf. \cite[Proposition 3.10]{Fo}), as a consequence of the fact that a morphism $f \colon X \to Y$ is $k$-connected (with respect to $\A$) if and only if its cofiber is concentrated in degrees $\geq k + 1$ . Lastly, the admissibility property also follows easily from the properties of the weight structure (cf. \cite[Proposition 5.4]{Fo}, \cite[Lemma 1.7.4]{Wa}). Thus, Theorem \ref{devissage-multiple-type} and Example \ref{Gillet-Waldhausen} show that the inclusion of the heart, $\A_0 \hookrightarrow \C$, induces a homotopy equivalence $K(\A_0) \simeq K(\C)$. 

Using different methods, the theorem of the heart for weight structures is also obtained in the recent work of Hebestreit--Steimle \cite[8.1.3]{HS} as a consequence of a more general theorem of the heart in the context of Grothendieck-Witt theory \cite[Theorem B]{HS}.
\end{example}

\end{document}